\documentclass[12pt]{article}

\usepackage{amsmath, amsthm, amssymb, amsfonts}
\usepackage{mathrsfs}    
\usepackage{bm}          
\usepackage{graphicx}     
\usepackage{hyperref}     
\usepackage{enumitem}     
\usepackage{tikz-cd}      
\usepackage{geometry}     
\usepackage{tikz}
\usepackage[all]{xy}    
\usepackage{tikz-cd}    
\newtheorem{theorem}{Theorem}[section]
\newtheorem{lemma}[theorem]{Lemma}
\newtheorem{corollary}[theorem]{Corollary}
\newtheorem{proposition}[theorem]{Proposition}
\theoremstyle{definition}
\newtheorem{definition}[theorem]{Definition}

\theoremstyle{remark}
\newtheorem{remark}[theorem]{Remark}
\newcommand{\subjclass}[2][]{\medskip\noindent\textbf{Mathematics Subject Classification (2020):} #2.}
\newcommand{\keywords}[1]{\medskip\noindent\textbf{Keywords:} #1.}
\title{Koszul Duality for Quadratic Monomial Algebras}
\author{M. Bouhada\\
\small \texttt{}}
\date{}

\begin{document}

\maketitle

\begin{abstract}
Let \(\Lambda\) be a finite-dimensional quadratic monomial algebra and let \(\Lambda^{!}\) be its Koszul dual. We investigate the structure of graded modules over \(\Lambda^{!}\) and derive several consequences for Koszul duality.

We prove that \(\Lambda^{!}\) is both graded coherent and graded co-coherent. Moreover, finitely presented and finitely copresented graded \(\Lambda^{!}\)-modules coincide with perfect and coperfect modules, respectively. As a consequence, the associated tails and cotails categories are hereditary abelian categories admitting explicit descriptions in terms of linear and colinear modules.

We further show that every finite-dimensional quadratic monomial algebra is absolutely Koszul and has global linearity defect at most one. In particular, finitely presented graded modules have rational Poincaré and Hilbert series.

Using these structural results, we refine graded and ungraded derived Koszul dualities, singular Koszul dualities, and the Bernstein--Gelfand--Gelfand correspondence. We obtain explicit descriptions of the associated triangulated categories and of the induced nonstandard \(t\)-structures. As an application, we derive new bounds on the finitistic dimension of quadratic monomial algebras in terms of finite paths in the Koszul dual algebra.
\end{abstract}
\subjclass[2020]{16E30, 18G35, 18E30, 16G20, 16E45}

\keywords{Koszul duality, quadratic monomial algebras, derived categories, singularity categories, t-structures, BGG correspondence}
\begin{center}
\tableofcontents
\end{center}

\section{Introduction}

Quadratic monomial algebras form a distinguished class of graded algebras that arise naturally in several areas of mathematics, including the representation theory of finite-dimensional algebras, the theory of semisimple Lie algebras, and homological mirror symmetry. In the representation theory of finite-dimensional algebras, they provide a rich source of examples and counterexamples for testing homological conjectures and investigating homological phenomena. For instance, it is well known that, for monomial algebras, the big finitistic dimension need not coincide with the small finitistic dimension, whereas the latter is always finite; see~\cite{13,33}.

These algebras already appear in classical representation-theoretic settings. For example, the principal block \(\mathcal{O}_0\) of the Bernstein--Gelfand--Gelfand category \(\mathcal{O}\) associated with \(\mathfrak{sl}_2(\mathbb{C})\) is equivalent to the category of finite-dimensional modules over the algebra given by the quiver
\[
\begin{tikzpicture}[->,>=stealth,auto,scale=0.7,baseline=-0.5ex]
\node (1) at (0,0) {$1$};
\node (2) at (1.5,0) {$2$};
\draw[bend left=40] (1) to node[above] {$\alpha$} (2);
\draw[bend left=40] (2) to node[below] {$\beta$} (1);
\end{tikzpicture}
\qquad
\text{with relation } \alpha\beta=0,
\]
see~\cite[Theorem~5.3.1]{25}.

Moreover, these algebras also arise naturally in homological mirror symmetry through their connections with Fukaya categories; see~\cite{4,15,22}. In both the commutative~\cite{12} and noncommutative~\cite{14} settings, they are known to be Koszul (see also Theorem~2.3): every simple module generated in degree~\(0\) admits a linear projective resolution, and the Koszul dual of such an algebra is again quadratic monomial. This stability under Koszul duality makes this class of algebras particularly well suited for the study of derived and singular Koszul dualities.

Despite their importance, comparatively little is known about their bounded derived and singularity categories in full generality. Apart from the general framework developed in~\cite{5}, explicit descriptions of these categories remain largely unavailable. Much of the existing literature has therefore focused on special subclasses, motivated in part by their connections with Fukaya categories and homological mirror symmetry; see, for example,~\cite{14,19}.

The aim of this paper is to provide a systematic study of this class of algebras from the perspective of Koszul duality. More precisely, this work continues the program initiated in~\cite{5}, where several classes of algebras were shown to admit refined forms of derived and singular Koszul dualities. We show that quadratic monomial algebras exhibit particularly favorable duality properties within this framework.

Our approach is based on a detailed analysis of the categories of finitely presented and finitely copresented graded modules over the Koszul dual algebra \(\Lambda^{!}\). We prove that, for every finite-dimensional quadratic monomial algebra \(\Lambda\), the algebra \(\Lambda^{!}\) is both graded coherent and graded co-coherent, and that finitely presented (respectively, finitely copresented) graded modules coincide with perfect (respectively, coperfect) modules. As a consequence, the associated tails and cotails categories are hereditary abelian categories admitting explicit structural descriptions.

We further prove that these algebras are absolutely Koszul and satisfy
\[
\operatorname{gl.ld}\Lambda \leq 1.
\]
In particular, every finitely presented graded module admits rational Poincar\'e and Hilbert series. We also show that finitely presented and finitely copresented graded modules are precisely those admitting linear and colinear truncations, respectively.

Building on these structural results, we refine the graded derived and singular Koszul dualities in this setting. More precisely, we give explicit realizations of the triangulated equivalences relating the bounded derived and singular categories of graded modules over \(\Lambda\) and its Koszul dual \(\Lambda^{!}\).

Finally, we show that these dualities induce nonstandard \(t\)-structures on the bounded derived and singularity categories. We describe the corresponding hearts explicitly in terms of linear complexes and cohomological shifts of linear modules.

As a further application of these dualities, we investigate the behavior of the finitistic dimension under Koszul duality. More precisely, we show how finiteness properties of the Koszul dual algebra control the finitistic dimension of finite-dimensional quadratic monomial algebras, thereby suggesting possible applications to the finitistic dimension conjecture for general finite-dimensional Koszul algebras.
\medskip

\noindent
\textbf{Organization.}
Section~2 contains the necessary preliminaries on quadratic monomial algebras and their homological properties. In Section~3, we establish the main structural results on finitely presented and finitely copresented modules and derive the refined Koszul dualities together with their applications to derived and singular categories.
\medskip

\section{Preliminaries}
The aim of this section is to recall the basic definitions and properties used throughout the paper. 
We begin by recalling that a \emph{\(k\)-linear category} is a category whose Hom-sets are \(k\)-vector spaces and whose composition maps are bilinear over \(k\). 
Throughout, \(\Lambda\) denotes a quadratic monomial algebra, which may be either locally finite-dimensional or finite-dimensional, and is regarded as a \(k\)-linear category. 
Unless otherwise specified, all \(\Lambda\)-modules are assumed to be left modules, viewed as covariant \(k\)-linear functors.

We first recall the definition of a quadratic monomial algebra.

\begin{definition}
A \emph{quadratic monomial algebra} is an algebra of the form 
\[
\Lambda = kQ/I,
\]
where \(Q\) is a finite quiver and \(I \subset kQ\) is an ideal generated by quadratic monomials. 
We say that \(\Lambda\) is \emph{locally finite-dimensional} if each graded component \(\Lambda_i\) is finite-dimensional for all \(i \ge 0\), and \emph{finite-dimensional} if, in addition, the ideal \(I\) is admissible.
\end{definition}

A \emph{left \(\Lambda\)-module} is a covariant \(k\)-linear functor
\[
M \colon \Lambda \longrightarrow k\textup{-Mod},
\]
where \(k\textup{-Mod}\) denotes the category of \(k\)-vector spaces and \(k\)-linear maps. 
Explicitly, such a functor assigns to each vertex \(x \in Q_0\) a vector space \(M(x)\), and to each arrow \(\alpha : x \to y\) a linear map 
\[
M(\alpha) \colon M(x) \longrightarrow M(y)
\]
satisfying the usual compatibility with composition. 
Morphisms of \(\Lambda\)-modules are natural transformations. When \(\Lambda\) is finite-dimensional, we denote by
\[
\Lambda\textup{-mod}
\]
the category of finite-dimensional left \(\Lambda\)-modules. We denote by
\[
\Lambda\textup{-proj}
\qquad\text{and}\qquad
\Lambda\textup{-inj}
\]
the full subcategories of finite-dimensional projective and injective
\(\Lambda\)-modules, respectively.

\medskip

We now regard \(\Lambda\) as a \emph{graded \(k\)-linear category}, that is, a category whose Hom-spaces are \(\mathbb{Z}\)-graded \(k\)-vector spaces and whose composition maps are bilinear and compatible with the grading. 
A \emph{graded left \(\Lambda\)-module} is a covariant \(k\)-linear functor
\[
N \colon \Lambda \longrightarrow k\textup{-GMOD},
\]
where \(k\textup{-GMOD}\) denotes the category of  \(\mathbb{Z}\)-graded \(k\)-vector spaces and degree-preserving \(k\)-linear maps. 
Explicitly, such a functor assigns to each vertex \(x \in Q_0\) a graded vector space
\[
N(x) = \bigoplus_{i \in \mathbb{Z}} N_i(x),
\]
and to each arrow \(\alpha : x \to y\) a homogeneous \(k\)-linear map
\[
N(\alpha) \colon N(x) \longrightarrow N(y)
\]
of degree \(0\), compatible with composition in \(\Lambda\). 
Morphisms of graded \(\Lambda\)-modules are natural transformations preserving the grading.

\medskip

A graded \(\Lambda\)-module \(N\) is said to be:
\begin{itemize}
    \item \emph{left bounded} if \(N_i(x) = 0\) for all \(x \in Q_0\) and all sufficiently small \(i \in \mathbb{Z}\);
    
    \item \emph{right bounded} if \(N_i(x) = 0\) for all \(x \in Q_0\) and all sufficiently large \(i \in \mathbb{Z}\);
    
    \item \emph{locally finite-dimensional} if \(N_i(x)\) is finite-dimensional for all \(x \in Q_0\) and all \(i \in \mathbb{Z}\);
    
    \item \emph{bounded} if it is both left and right bounded. In this case, \(N\) is concentrated in finitely many degrees and is therefore finite-dimensional.
\end{itemize}

\medskip 

We denote by \(\Lambda\textup{-GMod}\) the category of locally finite-dimensional graded \(\Lambda\)-modules, and by
\(\Lambda\textup{-gmod}\) its full subcategory of finite-dimensional graded modules. Both categories \(\Lambda\textup{-GMod}\) and \(\Lambda\textup{-gmod}\) are equipped with the grading shift functor \(\langle i\rangle\), \(i \in \mathbb{Z}\), defined by
\[
M\langle i\rangle_{n} = M_{n-i},
\]
for any graded module \(M\). We write
\(\Lambda\textup{-GMod}^{+}\) (resp.\ \(\Lambda\textup{-GMod}^{-}\))
for the full subcategory of \(\Lambda\textup{-GMod}\) consisting of left bounded
(resp.\ right bounded) modules.

We denote by \(\Lambda\textup{-Proj}^{\mathbb{Z}}\) the full subcategory of
\(\Lambda\textup{-GMod}\) consisting of finite direct sums of grading shifts
\(P_x\langle i\rangle\), where \(x\in Q_0\) and \(i\in\mathbb{Z}\).
For each vertex \(x\in Q_0\), the indecomposable graded projective
\(\Lambda\)-module \(P_x\) is the representable functor
\[
P_x \colon \Lambda \longrightarrow k\textup{-GMod}, 
\qquad 
P_x(y)=\Lambda(x,y),
\]
endowed with the grading induced by path length. Thus, for each
\(n\ge 0\), the homogeneous component \((P_x(y))_n\) is spanned by all
paths of length \(n\) from \(x\) to \(y\).
For an arrow \(\alpha\colon a\to b\), the induced morphism
\[
P_x(\alpha)\colon P_x(a)\longrightarrow P_x(b)
\]
is given by right multiplication \(p \mapsto p\alpha\).

\medskip

If \(\Lambda\) is finite-dimensional, we write
\(\Lambda\textup{-proj}^{\mathbb{Z}}\) for the full subcategory of
finite-dimensional graded projective \(\Lambda\)-modules.

\medskip

Dually, we denote by \(\Lambda\textup{-Inj}^{\mathbb{Z}}\) the full subcategory
of \(\Lambda\textup{-GMod}\) consisting of finite direct sums of grading shifts
\(I_x\langle i\rangle\), where \(x\in Q_0\) and \(i\in\mathbb{Z}\).
For each vertex \(x\in Q_0\), the indecomposable graded injective
\(\Lambda\)-module \(I_x\) is given by the functor
\[
I_x \colon \Lambda \longrightarrow k\textup{-GMod}, 
\qquad 
I_x(y)=D\Lambda(y,x),
\]
where \(D=\operatorname{Hom}_k(-,k)\).
Equivalently, \(I_x \cong D\!\bigl(\Lambda^{\operatorname{op}}(x,-)\bigr)\).

The grading on \(I_x(y)\) is induced by path length, so that
\((I_x(y))_n\) is spanned by the duals of paths of length \(n\)
from \(y\) to \(x\).
For an arrow \(\alpha\colon a\to b\), the induced morphism
\[
I_x(\alpha)\colon I_x(a)\longrightarrow I_x(b)
\]
is given by precomposition with \(\alpha\); explicitly, for
\(f\in D\Lambda(a,x)\),
\[
I_x(\alpha)(f)(p)=f(p\alpha)
\qquad \text{for all } p\in \Lambda(b,x).
\]

Equivalently, under the identification of \(I_x(y)\) with the space spanned
by paths from \(y\) to \(x\), the map \(I_x(\alpha)\) is given by
\[
q \longmapsto
\begin{cases}
q' & \text{if } q = q'\alpha,\\
0 & \text{otherwise}.
\end{cases}
\]

\medskip

If \(\Lambda\) is finite-dimensional, we write
\(\Lambda\textup{-inj}^{\mathbb{Z}}\) for the full subcategory of
finite-dimensional graded injective \(\Lambda\)-modules.

\medskip 
We write \( \Lambda\textup{-Fp}^{\mathbb{Z}} \) for the category of finitely presented \emph{graded} \( \Lambda \)-modules, that is, modules \( M \) admitting a projective presentation
\[
   P^{1} \longrightarrow P^{0} \longrightarrow M \longrightarrow 0,
\]
with \( P^{1}, P^{0} \) finite direct sums of objects of \( \Lambda\textup{-Proj}^{\mathbb{Z}} \).

Dually, we denote by \( \Lambda\textup{-Fcp}^{\mathbb{Z}} \) the category of finitely copresented \emph{graded} \( \Lambda \)-modules, that is, modules \( N \) admitting an injective copresentation
\[
   0 \longrightarrow N \longrightarrow I^{0} \longrightarrow I^{1},
\]
with \( I^{0}, I^{1} \) finite direct sums of objects of \( \Lambda\textup{-Inj}^{\mathbb{Z}} \).

\medskip

The algebra \( \Lambda \) is said to be \emph{left coherent} (respectively, \emph{left co-coherent}) if the category 
\( \Lambda\textup{-Fp}^{\mathbb{Z}} \) (respectively, \( \Lambda\textup{-Fcp}^{\mathbb{Z}} \)) is abelian.

When \( \Lambda \) is left coherent, one defines the \emph{tails category}
\[
   \Lambda\textup{-Fp}^{\mathbb{Z}} \big/ \Lambda\textup{-gmod},
\]
which plays a central role in noncommutative projective geometry as a noncommutative analogue of the category \( \operatorname{Coh}(X) \) of coherent sheaves on a projective variety \( X \); see~\cite{1}.

Dually, when \( \Lambda \) is left co-coherent, one defines the \emph{cotails category}
\[
   \Lambda\textup{-Fcp}^{\mathbb{Z}} \big/ \Lambda\textup{-gmod},
\]
which may be viewed as the dual counterpart of the tails category.
\medskip

Even when \( \Lambda \) is not coherent (respectively, not co-coherent), one can still consider the exact categories
\[
   \Lambda\textup{-Cop}^{\mathbb{Z}} \big/ \Lambda\textup{-gmod} 
   \quad \text{and} \quad
   \Lambda\textup{-Pe}^{\mathbb{Z}} \big/ \Lambda\textup{-gmod},
\]
which remain fundamental in the formulation of graded singular Koszul dualities and the Bernstein--Gelfand--Gelfand correspondence; see~\cite{5}.

\medskip 

We denote by \( \Lambda\textup{-Pe}^{\mathbb{Z}} \) the category of \emph{perfect graded modules}, that is, graded \( \Lambda \)-modules \( M \) admitting a finite projective resolution
\[
   0 \longrightarrow P^{n} \longrightarrow P^{n+1} \longrightarrow \cdots \longrightarrow P^{1} \longrightarrow P^{0} \longrightarrow M \longrightarrow 0,
\]
where each \( P^{i} \) is a finite direct sum of objects from \( \Lambda\textup{-Proj}^{\mathbb{Z}} \).

Dually, we denote by \( \Lambda\textup{-Cop}^{\mathbb{Z}} \) the category of \emph{coperfect graded modules}, that is, graded \( \Lambda \)-modules \( N \) admitting a finite injective coresolution
\[
   0 \longrightarrow N \longrightarrow I^{0} \longrightarrow I^{1} \longrightarrow \cdots \longrightarrow I^{n}
   \longrightarrow 0,
\]
where each \( I^{i} \) is a finite direct sum of objects from \( \Lambda\textup{-Inj}^{\mathbb{Z}} \).

\medskip 

Given a quadratic monomial algebra \(\Lambda\), we denote by \(\Lambda^!\)
its quadratic dual. Since \(\Lambda = kQ/I\) is quadratic monomial, its
quadratic dual admits the description
\[
\Lambda^! = k Q^{\mathrm{op}} / I^!,
\]
where \(I^!\) is the ideal generated by quadratic monomials
\(\alpha^{\mathrm{op}} \beta^{\mathrm{op}}\) such that \(\beta \alpha \notin I\).
In particular, \(\Lambda^!\) is again a quadratic monomial algebra.

\medskip

We say that \(\Lambda\) is \emph{Koszul} if, for every \(x \in Q_0\), the
minimal graded projective resolution of the simple module \(S_x\)
\[
\cdots \longrightarrow P^{2} \longrightarrow P^{1}
\longrightarrow P^0 \longrightarrow S_x \longrightarrow 0
\]
is \emph{linear}, that is, each term \(P^{i}\) is generated in internal
degree \(i\). Equivalently, for each \(i \ge 0\),
\[
P^{i} \;\cong\; \bigoplus_{y \in Q_0} P_y\langle i\rangle,
\]
where the direct sum is finite. Such a resolution is called a
\emph{linear resolution}. Although it is classical that quadratic monomial algebras are Koszul, we provide a new proof tailored to our framework; see Theorem~2.3.

\medskip

Similarly, one defines the notions of Koszul and coKoszul modules.
A graded \(\Lambda\)-module (resp.\ \(\Lambda^!\)-module) \(M\) is called
\emph{Koszul} (resp.\ \emph{coKoszul}) if it admits a finite linear
projective resolution (resp.\ a finite colinear injective coresolution).

There is a standard homological characterization of Koszul and coKoszul
modules. A graded \(\Lambda\)-module \(M\) is Koszul if
\[
\operatorname{Ext}^n_{\Lambda\textup{-GMod}}\bigl(M, S_x\langle i\rangle\bigr)=0
\quad \text{for all } i \neq n \text{ and all } x \in Q_0,
\]
and a graded \(\Lambda^!\)-module \(M\) is coKoszul if
\[
\operatorname{Ext}^n_{\Lambda^!\textup{-GMod}}\bigl(S^{!}_x\langle -i\rangle, M\bigr)=0
\quad \text{for all } i \neq n \text{ and all } x \in Q_0.
\]

We say that a graded module \(M\) is \emph{linear} (resp.\ \emph{colinear})
if there exists an integer \(i\) such that \(M\langle -i\rangle\) is Koszul
(resp.\ coKoszul).

\medskip

Recall that the algebra \(\Lambda\) is called \emph{Iwanaga--Gorenstein}
if it has finite injective dimension on both sides, that is,
\[
\operatorname{id}_{\Lambda}\Lambda < \infty
\qquad\text{and}\qquad
\operatorname{id}_{\Lambda^{\mathrm{op}}}\Lambda < \infty .
\]

A finite-dimensional \(\Lambda\)-module \(M\) is called
\emph{Gorenstein-projective} if there exists a totally acyclic complex
\(P^\bullet\) of finitely generated projective \(\Lambda\)-modules such that
\[
M \cong \ker(P^0 \longrightarrow P^1),
\]
where \(P^\bullet\) is acyclic and
\(\operatorname{Hom}_{\Lambda}(P^\bullet,\Lambda)\) is also acyclic.

We denote by
\[
\Lambda\textup{-Gproj}
\]
the category of finite-dimensional Gorenstein-projective \(\Lambda\)-modules.
Its stable category is denoted by
\[
\Lambda\textup{-}\underline{\mathrm{Gproj}}
\]
and is obtained by factoring out morphisms that factor through projective modules.

Similarly, a finite-dimensional graded \(\Lambda\)-module \(M\) is called
\emph{graded Gorenstein-projective} if it admits a totally acyclic complex
\(P^\bullet\) of finitely generated graded projective \(\Lambda\)-modules such that
\[
M \cong \ker(P^0 \longrightarrow P^1).
\]
We denote the corresponding category by
\[
\Lambda\textup{-Gproj}^{\mathbb Z},
\]
and its stable category by
\[
\Lambda\textup{-}\underline{\mathrm{Gproj}}^{\mathbb Z}.
\]

\medskip 

Given an exact category \(\mathcal{A}\) in the sense of Quillen with enough projective objects, we denote by \(\mathsf{D}^{b}(\mathcal{A})\) its bounded derived category and by
\[
\mathsf{D}_{\mathrm{sg}}(\mathcal{A})
:=
\mathsf{D}^{b}(\mathcal{A})/
\mathsf{K}^{b}(\operatorname{Proj}\mathcal{A})
\]
its singularity category.

\medskip 

We now recall the Koszul duality theorems established in~\cite{5}, which provide a unified description of the bounded derived and singularity categories in both the graded and ungraded settings.

\begin{theorem}[\cite{5}]
Let \(\Lambda\) be a finite-dimensional Koszul algebra. Then the following equivalences of triangulated categories hold.
\begin{enumerate}\itemsep=0.4em

\item[\textup{(i)}] \textbf{Graded derived Koszul duality:}
\[
\mathfrak{F} \colon
\mathsf{D}^{b}\bigl(\Lambda^{!}\textup{-Cop}^{\mathbb{Z}}\bigr)
\;\xrightarrow{\ \sim\ }\;
\mathsf{D}^{b}\bigl(\Lambda\textup{-gmod}\bigr),
\qquad
\mathfrak{F} \colon
\mathsf{D}^{b}\bigl(\Lambda\textup{-gmod}\bigr)
\;\xrightarrow{\ \sim\ }\;
\mathsf{D}^{b}\bigl(\Lambda^{!}\textup{-Pe}^{\mathbb{Z}}\bigr).
\]

In particular, if \(\Lambda\) has finite global dimension, then the graded derived Koszul duality induces a triangulated equivalence
\[
\mathsf{D}^{b}\bigl(\Lambda^{!}\textup{-gmod}\bigr)
\;\xrightarrow{\ \sim\ }\;
\mathsf{D}^{b}\bigl(\Lambda\textup{-gmod}\bigr).
\]

\item[\textup{(ii)}] \textbf{Graded singular Koszul duality:}
\[
\mathsf{D}^{b}\!\Bigl(\Lambda^{!}\textup{-Cop}^{\mathbb{Z}}\big/\Lambda^{!}\textup{-gmod}\Bigr)
\;\xrightarrow{\ \sim\ }\;
\mathsf{D}_{\mathrm{sg}}\bigl(\Lambda\textup{-gmod}\bigr).
\]

If, in addition, \(\Lambda\) is Iwanaga--Gorenstein, then there is an equivalence
\[
\mathsf{D}^{b}\!\Bigl(\Lambda^{!}\textup{-Pe}^{\mathbb{Z}}\big/\Lambda^{!}\textup{-gmod}\Bigr)
\;\xrightarrow{\ \sim\ }\;
\mathsf{D}^{b}\!\Bigl(\Lambda^{!}\textup{-Cop}^{\mathbb{Z}}\big/\Lambda^{!}\textup{-gmod}\Bigr)
\;\xrightarrow{\ \sim\ }\;
\mathsf{D}_{\mathrm{sg}}\bigl(\Lambda\textup{-gmod}\bigr).
\]

\item[\textup{(iii)}] \textbf{Ungraded derived and singular Koszul dualities:}
\[
\mathrm{H}^{0}\!\Bigl(
\operatorname{pretr}\bigl(
\mathsf{D}_{dg}^{b}(\Lambda^{!}\textup{-Cop}^{\mathbb{Z}})
\big/\langle 1 \rangle[1]
\bigr)
\Bigr)
\;\xrightarrow{\ \sim\ }\;
\mathsf{D}^{b}\bigl(\Lambda\textup{-mod}\bigr),
\]
\[
\mathrm{H}^{0}\!\Bigl(
\operatorname{pretr}\bigl(
\mathsf{D}_{dg}^{b}(\Lambda^{!}\textup{-Cop}^{\mathbb{Z}}/\Lambda^{!}\textup{-gmod})
\big/\langle 1 \rangle[1]
\bigr)
\Bigr)
\;\xrightarrow{\ \sim\ }\;
\mathsf{D}_{\mathrm{sg}}\bigl(\Lambda\textup{-mod}\bigr).
\]

\item[\textup{(iv)}] \textbf{(Graded and ungraded) BGG correspondences:}

If \(\Lambda\) is Iwanaga--Gorenstein, then there are triangulated equivalences
\[
\mathsf{D}^{b}\!\Bigl(\Lambda^{!}\textup{-Pe}^{\mathbb{Z}}\big/\Lambda^{!}\textup{-gmod}\Bigr)
\;\xrightarrow{\ \sim\ }\;
\mathsf{D}^{b}\!\Bigl(\Lambda^{!}\textup{-Cop}^{\mathbb{Z}}\big/\Lambda^{!}\textup{-gmod}\Bigr)
\;\xrightarrow{\ \sim\ }\;
\Lambda\textup{-}\underline{\mathrm{Gproj}}^{\mathbb{Z}}
\]
and, in the ungraded setting,
\[
\mathrm{H}^{0}\!\Bigl(
\operatorname{pretr}\bigl(
\mathsf{D}_{dg}^{b}(\Lambda^{!}\textup{-Cop}^{\mathbb{Z}}/\Lambda^{!}\textup{-gmod})
\big/\langle 1 \rangle[1]
\bigr)
\Bigr)
\;\xrightarrow{\ \sim\ }\;
\Lambda\textup{-}\underline{\mathrm{Gproj}}.
\]

\end{enumerate}
\end{theorem}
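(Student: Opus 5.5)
The statement is Theorem~2.2 from~\cite{5}. Since it is quoted, the plan is to reconstruct its proof from the Koszul duality functor and follow the architecture of~\cite{5}.

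\textbf{Step 1: the Koszul functor.} Let \(\Lambda\) be Koszul, so \(\Lambda^{!}\cong \operatorname{Ext}^{*}_{\Lambda}(\Lambda_0,\Lambda_0)^{\mathrm{op}}\) and the Koszul complex \(\Lambda\otimes_{\Lambda_0}(\Lambda^{!})^{*}\) is a linear resolution of \(\Lambda_0\). Define \(\mathfrak{F}\) on complexes of graded modules in the usual way. A graded \(\Lambda^{!}\)-module \(N=\bigoplus N_j\) is sent to the complex whose \(j\)-th term is \(\Lambda\otimes_{\Lambda_0}N_j\langle j\rangle\), with differential induced by the canonical element \(\sum_\alpha \alpha\otimes\alpha^{*}\), and this is totalized for complexes. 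A quasi-inverse is given by the adjoint \(\Lambda^{!}\otimes_{\Lambda_0}(-)\), or by \(\operatorname{Hom}\) into the Koszul bimodule.

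\textbf{Step 2: derived dualities in (i).} Following Beilinson--Ginzburg--Soergel, the Koszul functor gives an equivalence \(\mathsf{D}^{\downarrow}(\Lambda^{!}\textup{-GMod})\simeq \mathsf{D}^{\uparrow}(\Lambda\textup{-GMod})\) between suitable unbounded categories with grading conditions. The statement is then obtained by restricting to subcategories.
\begin{itemize}
\item Since \(\Lambda\) is finite dimensional, \(\mathsf{D}^{b}(\Lambda\textup{-gmod})\) is generated, as a thick subcategory, by the shifted simples \(S_x\langle i\rangle\).
\item \(\mathfrak{F}\) sends \(S_x\langle i\rangle\) to shifts of the indecomposable injectives \(I^{!}_x\) in one direction, and to shifts of the projectives \(P^{!}_x\) in the other.
\item Hence the essential image is the thick subcategory generated by \(\Lambda^{!}\textup{-Inj}^{\mathbb Z}\), respectively by \(\Lambda^{!}\textup{-Proj}^{\mathbb Z}\).
\item These thick subcategories are identified with \(\mathsf{D}^{b}(\Lambda^{!}\textup{-Cop}^{\mathbb Z})\) and \(\mathsf{D}^{b}(\Lambda^{!}\textup{-Pe}^{\mathbb Z})\). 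To do this, check that \(\Lambda^{!}\textup{-Cop}^{\mathbb Z}\) and \(\Lambda^{!}\textup{-Pe}^{\mathbb Z}\) are exact categories whose bounded derived categories are \(\mathsf{K}^{b}\) of injectives and of projectives respectively. This uses that coperfect modules have finite coresolutions by \(\operatorname{Hom}\)-acyclic objects.
\item If \(\operatorname{gldim}\Lambda<\infty\), then \(\Lambda^{!}\) is finite dimensional. Every finite-dimensional graded \(\Lambda^{!}\)-module is then both perfect and coperfect, which gives the last equivalence in (i).
\end{itemize}

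\textbf{Step 3: singular duality in (ii).} Take Verdier quotients. Under \(\mathfrak F\), the subcategory \(\mathsf{K}^{b}(\Lambda\textup{-proj}^{\mathbb Z})\) corresponds to the thick subcategory generated by finite-dimensional modules \(\Lambda_0\langle i\rangle\) over \(\Lambda^{!}\), that is, by \(\Lambda^{!}\textup{-gmod}\). It then remains to show
\[
\mathsf{D}^{b}(\Lambda^{!}\textup{-Cop}^{\mathbb Z})/\langle\Lambda^{!}\textup{-gmod}\rangle\simeq \mathsf{D}^{b}\bigl(\Lambda^{!}\textup{-Cop}^{\mathbb Z}/\Lambda^{!}\textup{-gmod}\bigr).
\]
This is a Serre-type quotient statement for exact categories, and I expect it to be the main obstacle. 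I would approach it by verifying a left/right s-filtering condition: every morphism from a finite-dimensional module into a coperfect module should factor through a finite-dimensional coperfect subobject, using the truncations \(N_{\ge n}\). One must also check that these truncations remain coperfect, which is delicate without coherence. In the Iwanaga--Gorenstein case, the perfect and coperfect sides are exchanged by the Nakayama-type functor, and Step 2 applied to the class of \(\Lambda\) in both derived categories gives the extra equivalence.

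\textbf{Step 4: ungraded versions and BGG in (iii)--(iv).} For (iii), take dg enhancements and pass to the orbit category by the autoequivalence \(\langle1\rangle[1]\). The functor \(\mathfrak F\) intertwines \(\langle1\rangle[1]\) with the grading shift \(\langle 1\rangle\) on the \(\Lambda\) side. The forgetful functor from graded to ungraded modules then identifies the pretriangulated hull of the dg orbit category with \(\mathsf{D}^{b}(\Lambda\textup{-mod})\), in the manner of Keller's orbit category theorem. This requires showing that the forgetful functor is fully faithful up to orbits and dense up to thick closure, and the same argument gives the singular version. For (iv), combine (ii) and (iii) with Buchweitz's equivalence \(\mathsf{D}_{\mathrm{sg}}\simeq\underline{\mathrm{Gproj}}\), which holds in both the graded and ungraded settings for Iwanaga--Gorenstein algebras.
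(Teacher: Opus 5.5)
Your outline is essentially correct, but for (i) it takes a different route from the one the paper recalls from~\cite{5}.

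\textbf{The paper's route.} There $\mathfrak{F}$ is built by hand. You apply the Koszul functor $K$ degreewise to a bounded complex of coperfect modules and totalize into $\mathsf{K}^{-,b}(\Lambda\textup{-proj}^{\mathbb Z})\simeq\mathsf{D}^{b}(\Lambda\textup{-gmod})$. Well-definedness comes from the Acyclic Assembly Lemma together with the identification $\Lambda^{!}\textup{-GMod}^{-,b}=\Lambda^{!}\textup{-Cop}^{\mathbb Z}$. The inverse $\mathfrak{G}$ is written down explicitly as the coKoszul functor into colinear complexes of the $I^{!}_{x}\langle n\rangle$. Note that this, not $\Lambda^{!}\otimes_{\Lambda_0}-$, is the inverse on the coperfect side; the latter produces the perfect side.

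\textbf{Your route.} You restrict the Beilinson--Ginzburg--Soergel equivalence $\mathsf{D}^{\downarrow}\simeq\mathsf{D}^{\uparrow}$. You then identify essential images through generators, $S_x\langle i\rangle\mapsto I^{!}_{x}\langle i\rangle[-i]$, and conclude with $\mathsf{K}^{b}(\Lambda^{!}\textup{-Inj}^{\mathbb Z})\simeq\mathsf{D}^{b}(\Lambda^{!}\textup{-Cop}^{\mathbb Z})$. This is shorter given BGS. It even recovers $\Lambda^{!}\textup{-GMod}^{-,b}=\Lambda^{!}\textup{-Cop}^{\mathbb Z}$ as a corollary.

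The price is that you must verify the full faithfulness of $\mathsf{D}^{b}(\Lambda\textup{-gmod})$ and of $\mathsf{K}^{b}(\Lambda^{!}\textup{-Inj}^{\mathbb Z})$ inside the BGS categories. This holds because minimal projective resolutions satisfy the grading condition, since generator degrees grow linearly. It also uses that $I^{!}_{x}$ is injective among all graded modules.

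The paper's route is self-contained and gives both functors at the level of complexes. That is what Lemma~3.11 and the dg orbit construction use.

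\textbf{Parts (iii)--(iv).} Your Keller-type orbit argument plus Buchweitz matches the Galois-covering approach of~\cite{5}. One correction: $\mathrm{H}^{0}(\operatorname{pretr}(-))$ need not be idempotent complete, so you need generation as a triangulated category, not ``up to thick closure''. This holds because every module is an iterated extension of gradable simples.

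\textbf{Step 3, first point: which s-filtering condition.} For $\Lambda^{!}\textup{-gmod}\subset\Lambda^{!}\textup{-Cop}^{\mathbb Z}$, the filtering condition you state is true but is not what decides the side. The special condition for inflations fails. If $I^{!}_{x}$ is infinite dimensional, any map $I^{!}_{x}\to A'$ with $A'$ finite dimensional has nonzero kernel. That kernel contains the essential socle $S^{!}_{x}$, so no such map is monic on $S^{!}_{x}$.

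What holds is the deflation version:
\begin{itemize}
\item a map $N\to A$ with $A$ finite dimensional factors through the deflation onto its image;
\item a deflation $N\twoheadrightarrow A$ stays a deflation after restriction to $N_{\ge n}$ for $n\ll 0$.
\end{itemize}
So Schlichting's theorem must be invoked in that form. For $\Lambda^{!}\textup{-Pe}^{\mathbb Z}$ use the dual form, with the quotients $\tau_{\le r}$.

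The truncations are also not delicate. $\mathfrak{G}(P_x)$ is a bounded injective coresolution of $S^{!}_{x}$, so every finite-dimensional graded $\Lambda^{!}$-module is coperfect. The remaining kernels and cokernels are then coperfect by two-out-of-three; no coherence is needed.

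\textbf{Step 3, second point: the Iwanaga--Gorenstein equivalence.} This is not a Nakayama-functor statement, since $\nu$ gives $\mathsf{K}^{b}(\textup{proj})\simeq\mathsf{K}^{b}(\textup{inj})$ for every finite-dimensional algebra. The mechanism is as follows. The perfect-side duality sends $I_x\langle i\rangle$ to shifts of $S^{!}_{x}$. Hence it identifies $\mathsf{D}^{b}(\Lambda\textup{-gmod})/\mathsf{K}^{b}(\Lambda\textup{-inj}^{\mathbb Z})$ with $\mathsf{D}^{b}(\Lambda^{!}\textup{-Pe}^{\mathbb Z}/\Lambda^{!}\textup{-gmod})$. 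The Gorenstein hypothesis is used exactly to get $\mathsf{K}^{b}(\Lambda\textup{-proj}^{\mathbb Z})=\mathsf{K}^{b}(\Lambda\textup{-inj}^{\mathbb Z})$ inside $\mathsf{D}^{b}(\Lambda\textup{-gmod})$.
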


Here $\mathrm{H}^{0}\!\bigl(\operatorname{pretr}(-)\bigr)$ denotes the homotopy category of the pretriangulated hull of the corresponding dg orbit category; see~\cite[Subsection~2.4]{5} for further details.

\medskip

One of the main objectives of this paper is to study, in the specific context of quadratic monomial algebras, the Koszul duality equivalences stated above.
\medskip

More precisely, we investigate the categories
\[
\Lambda^{!}\textup{-Cop}^{\mathbb{Z}} 
\quad \text{and} \quad 
\Lambda^{!}\textup{-Cop}^{\mathbb{Z}} \big/ \Lambda^{!}\textup{-gmod}
\quad
(\text{resp.\ } 
\Lambda^{!}\textup{-Pe}^{\mathbb{Z}} 
\quad \text{and} \quad 
\Lambda^{!}\textup{-Pe}^{\mathbb{Z}} \big/ \Lambda^{!}\textup{-gmod}),
\]
with the aim of understanding their structure and their role in the formulation of graded derived and singular Koszul dualities for quadratic monomial algebras.

\medskip

To this end, we recall the construction of the \emph{graded derived Koszul duality}
\[
\mathfrak{F} \colon
\mathsf{D}^{b}\!\bigl(\Lambda^{!}\textup{-Cop}^{\mathbb{Z}}\bigr)
\longrightarrow
\mathsf{D}^{b}\!\bigl(\Lambda\textup{-gmod}\bigr),
\]
which plays a central role in relating the derived categories of a finite-dimensional quadratic monomial algebra and its Koszul dual. 
We briefly review this construction and its main properties, as they will serve as the foundation for the refinements developed in this work.

\medskip

We emphasize that the construction of the functor \(\mathfrak{F}\) extends to the more general setting in which \(\Lambda\) is locally finite-dimensional. However, in order to obtain the graded derived Koszul duality at the level of bounded derived categories, we shall assume throughout that \(\Lambda\) is finite-dimensional.

\medskip

We begin with the following equivalence of abelian categories, which plays a significant role in previous works~\cite{5} (see also~\cite{23,26}) and will be essential in the present context.

\medskip

Recall that a cochain complex of graded projective modules
\[
\cdots \longrightarrow P^{n-1} \longrightarrow P^{n} 
\longrightarrow P^{n+1} \longrightarrow \cdots
\]
is called \emph{linear} if every indecomposable projective direct summand of \( P^{n} \) 
is of the form \( P_{x}\langle n\rangle \), where \( x \in Q_{0} \); 
equivalently, each indecomposable summand is generated in degree \( n \).
We denote by
\[
\mathcal{LC}\!\bigl(\Lambda\textup{-proj}^{\mathbb{Z}}\bigr)
\]
the category of linear complexes of graded projective \( \Lambda \)-modules.

Martínez-Villa and Saorín~\cite[Theorem~2.4]{23}, and independently Mazorchuk, Ovsienko, and Stroppel~\cite[Theorem~12]{26}, established an equivalence of abelian categories
\[
\Lambda^{!}\textup{-GMod}
\xrightarrow{\;\sim\;}
\mathcal{LC}\!\bigl(\Lambda\textup{-proj}^{\mathbb{Z}}\bigr),
\]
which restricts to an equivalence on finitely generated objects:
\[
\Lambda^{!}\textup{-gmod}
\xrightarrow{\;\sim\;}
\mathcal{LC}^{b}\!\bigl(\Lambda\textup{-proj}^{\mathbb{Z}}\bigr).
\]

These equivalences are realized by a functor
\[
K \colon \Lambda^{!}\textup{-GMod} \longrightarrow
\mathcal{LC}\!\bigl(\Lambda\textup{-proj}^{\mathbb{Z}}\bigr),
\]
which we shall refer to as the \emph{Koszul functor}. It is defined as follows. For \(M \in \Lambda^{!}\textup{-GMod}\) and \(n \in \mathbb{Z}\), set
\[
K(M)^{n} \;:=\; \bigoplus_{x \in Q_{0}} P_{x}\langle n \rangle \otimes_{k} M_{n}(x),
\]
which is a finite direct sum.

The differential
\[
d^{n} \colon K(M)^{n} \longrightarrow K(M)^{n+1}
\]
is given componentwise by
\[
d^{n}_{x,y}
\;=\;
\sum_{\alpha \colon y \to x}
P_{\alpha} \otimes_{k} M(\alpha^{\operatorname{op}}),
\]
where \(P_{\alpha} \colon P_{x} \to P_{y}\) is induced by right multiplication by \(\alpha\), and
\[
M(\alpha^{\operatorname{op}}) \colon M_{n}(x) \longrightarrow M_{n+1}(y)
\]
is the corresponding \(k\)-linear map.

Let \( \Lambda^{!}\textup{-GMod}^{-,b} \) denote the full subcategory of 
\( \Lambda^{!}\textup{-GMod} \) consisting of modules \( M \) such that 
the complex \( K(M) \) is right bounded and has bounded cohomology. 
The above functor then restricts to an equivalence of exact categories
\[
K \colon \Lambda^{!}\textup{-GMod}^{-,b} 
\xrightarrow{\;\sim\;} 
\mathcal{LC}^{-,b}\!\bigl(\Lambda\textup{-proj}^{\mathbb{Z}}\bigr).
\]

Although the category \( \Lambda^{!}\textup{-GMod}^{-,b} \) is not abelian in general, 
it nevertheless admits a natural exact structure that makes it amenable to homological methods. 
In particular, it is closed under extensions and satisfies the two-out-of-three property for short exact sequences. Consequently, it is closed under kernels of admissible epimorphisms and cokernels of admissible monomorphisms.

As shown in~\cite{5}, when \( \Lambda \) is finite-dimensional, the category 
\( \Lambda^{!}\textup{-GMod}^{-,b} \) coincides with \( \Lambda^{!}\textup{-Cop}^{\mathbb{Z}} \), 
the category of coperfect modules.

\medskip 

We shall use the functor \(K\) to give a new proof of the Koszulness of \( \Lambda \) that is fully compatible with our approach to Koszul duality.

\begin{theorem}
The quadratic monomial algebra \( \Lambda \) is Koszul.
\end{theorem}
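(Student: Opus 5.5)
The idea is to apply the Koszul functor \(K\) to a suitable graded \(\Lambda^{!}\)-module and show that the resulting linear complex is a projective resolution of \(S_x\). For a vertex \(x\in Q_0\), consider the indecomposable graded projective \(\Lambda^{!}\)-module
\[
P^{!}_x=\Lambda^{!}(x,-),
\]
graded by path length, so that \((P^{!}_x)_n(y)\) has as basis the nonzero paths of length \(n\) in \(Q^{\mathrm{op}}\) from \(x\) to \(y\). Since \(\Lambda^{!}\) is locally finite-dimensional, \(P^{!}_x\) lies in \(\Lambda^{!}\textup{-GMod}\), and \(K(P^{!}_x)\) is a linear complex with \(K(P^{!}_x)^n=0\) for \(n<0\) and \(K(P^{!}_x)^0=P_x\). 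Reindexed homologically, the terms are \(\bigoplus_y P_y\langle n\rangle\otimes_k(P^{!}_x)_n(y)\) in homological degree \(n\). The differentials go from \(P_y\) to \(P_z\) via right multiplication by arrows, so in homological convention the complex reads \(\cdots\to K_1\to K_0=P_x\). The plan is to show this complex is exact in positive degrees with \(H_0=S_x\). Since it is linear by construction, this gives a linear projective resolution of \(S_x\) and hence the Koszulity of \(\Lambda\). (One must match the conventions, namely \(P_\alpha\colon P_x\to P_y\) for \(\alpha\colon y\to x\) and the opposite-quiver paths, so that the complex runs toward \(P_x\).)

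\textbf{Combinatorial description.} By the description of \(I^{!}\), a path \(\alpha_1\cdots\alpha_n\) in \(Q\) (composable, ending or starting at \(x\) as the conventions dictate) corresponds to a nonzero element of \(\Lambda^{!}\) exactly when every consecutive pair \(\alpha_i\alpha_{i+1}\) lies in \(I\), i.e.\ is a relation of \(\Lambda\). Thus \(K_n\) has a summand \(P_{t(p)}\langle n\rangle\) for each such "relation path" \(p\) of length \(n\), and the differential sends the generator \(e_p\) to \(e_{p'}\cdot\alpha_n\), where \(p=p'\alpha_n\). This is the Green--Happel--Zacharia (Anick) resolution for quadratic monomial algebras. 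Its composite \(K_2\to K_1\to K_0\) vanishes because \(\alpha_{n-1}\alpha_n\in I\), consistent with the fact that \(K\) produces complexes.

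\textbf{Exactness.} Because \(\Lambda\) is monomial, each \(K_n\) has a \(k\)-basis of pairs \((p,q)\), where \(p\) is a relation path and \(q\) is a nonzero path of \(\Lambda\) with \(pq\) composable. The differential acts by \((p'\alpha,q)\mapsto(p',\alpha q)\) if \(\alpha q\neq 0\) in \(\Lambda\), and by zero otherwise. Exactness can then be checked on this basis with a contracting argument: define a \(k\)-linear splitting \(s\) by \((p',\beta q'')\mapsto(p'\beta,q'')\) when \(p'\beta\) is a relation path, i.e.\ when the last arrow of \(p'\) followed by \(\beta\) lies in \(I\), and by \(0\) otherwise. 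For basis elements \((p',\beta q'')\) with \(\beta q''\) nonzero in \(\Lambda\), exactly one of two cases holds: either the last arrow of \(p'\) followed by \(\beta\) is a relation, and then the element is a boundary; or it is not, and then \((p',\beta q'')\) is not killed, so its differential is a nonzero basis element. Writing \(ds+sd=\mathrm{id}\) away from degree \(0\), with \(H_0=P_x/\operatorname{rad}P_x=S_x\), can be arranged by filtering by the length of \(q\) and using that \(d\) and \(s\) map basis elements to basis elements or zero. Equivalently, the complex splits as a direct sum over "total paths" \(pq\) of small complexes, each of which is acyclic. This is the step I expect to be the main obstacle: setting up the decomposition of the complex over total paths \(w=pq\), indexed by the possible split points, and checking that each summand is contractible except the one with \(w=e_x\). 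The quadratic monomial hypothesis is used here, since whether a split point is admissible depends only on adjacent arrows. This makes each summand a short complex \(0\to k\to k\to0\) or a zero complex.

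Finally, the resolution is minimal, since the differentials have image in the radical, and it is linear by construction. Hence \(\operatorname{Ext}^n(S_x,S_y\langle i\rangle)=0\) for \(i\neq n\), and \(\Lambda\) is Koszul.
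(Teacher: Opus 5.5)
\textbf{Gap: the wrong $\Lambda^{!}$-module.} The error is in the choice of module. Under the paper's definition, $K(M)^n$ is built from $M_n$, and the differential $d^n\colon K(M)^n\to K(M)^{n+1}$ uses the structure maps $M(\alpha^{\mathrm{op}})\colon M_n\to M_{n+1}$. So the direction of the complex is dictated by $M$ and cannot be reversed by ``reindexing homologically.''

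As you note yourself, $K(P^{!}_{x})^n=0$ for $n<0$ and $K(P^{!}_{x})^0=P_x$. Since $P^{!}_{x}=\Lambda^{!}(x,-)$ lives in degrees $\ge 0$ and its structure maps append arrows, $K(P^{!}_{x})$ is the cochain complex $0\to P_x\to\bigoplus_{\alpha\colon z\to x}P_z\langle 1\rangle\to\cdots$ in nonnegative degrees. Its differential appends an arrow ($e_x\mapsto(\alpha)_\alpha$); $P_x$ is the source of the first differential, not its target.

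This complex is not a resolution of $S_x$. Take $Q\colon 1\xrightarrow{\alpha}2$ with $I=0$. Then $\Lambda^{!}=kQ^{\mathrm{op}}$, $P^{!}_{1}=S^{!}_{1}$, and $K(P^{!}_{1})=P_1$ concentrated in degree $0$, so $H^0=P_1\neq S_1$.

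The complex you actually describe combinatorially has summands indexed by relation paths from $x$ and differential $e_p\mapsto e_{p'}\alpha_n$, deleting the last arrow. That complex is $K(I^{!}_{x})$, where $I^{!}_{x}=D\Lambda^{!}(-,x)$ is graded in degrees $\le 0$ and its structure maps delete an arrow. This is exactly the module the paper uses, and because it is right bounded, $K(I^{!}_{x})$ is bounded above, as a resolution must be.

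\textbf{After the fix.} Replace $P^{!}_{x}$ by $I^{!}_{x}$, or simply take the Green--Happel--Zacharia complex as the definition, since you verify $d^2=0$ directly from $\alpha_{n-1}\alpha_n\in I$. With that change the rest of your argument is correct, and your exactness step is a cleaner version of the paper's.

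The paper shows that each basis tensor killed by $d$ is a boundary by exhibiting an explicit preimage: it moves the offending arrow from the $\Lambda$-factor to the $\Lambda^{!}$-factor. This tacitly uses that $d$ sends basis elements injectively to basis elements or to zero.

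Your decomposition over total paths $w=\gamma_1\cdots\gamma_m$ makes this explicit. A split point $j$ is admissible iff $\gamma_i\gamma_{i+1}\in I$ for $i<j$ and $\gamma_i\gamma_{i+1}\notin I$ for $i>j$. Hence for $m\ge 1$ there are either no admissible split points or exactly two adjacent ones, $j$ and $j+1$, and $d$ maps the element at $j+1$ isomorphically onto the element at $j$. Only $w=e_x$ contributes cohomology, namely $S_x$ in degree $0$. This gives the contracting homotopy $s$ you wrote down and is a fully rigorous replacement for the paper's kernel argument.
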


\begin{proof}
We claim that \(K(I^{!}_{x})\), where \(I^{!}_{x}\) denotes the indecomposable injective
\(\Lambda^{!}\)-module with socle \(S^{!}_{x}\), is a minimal graded projective resolution of the
simple \(\Lambda\)-module \(S_{x}\) generated in degree \(0\).

It suffices to prove exactness in each cohomological degree and to identify \(H^{0}\).
By definition, the degree–\(n\) segment of \(K(I^{!}_{x})\) is
\[
\cdots \longrightarrow
 \bigoplus_{u \in Q_{0}} P_{u}\langle n-1 \rangle \otimes (I^{!}_{x})_{n-1}(u)
 \xrightarrow{d^{\,n+1}}
 \bigoplus_{v \in Q_{0}} P_{v}\langle n \rangle \otimes (I^{!}_{x})_{n}(v)
 \xrightarrow{d^{\,n}}
 \bigoplus_{w \in Q_{0}} P_{w}\langle n+1 \rangle \otimes (I^{!}_{x})_{n+1}(w)
 \longrightarrow \cdots ,
\]
with components
\[
d^{\,n}_{v,w}
\;=\;
\sum_{\alpha \colon w \to v}
\bigl(P_{\alpha} \otimes I^{!}_{x}(\alpha^{\mathrm{op}})\bigr),
\]
where \(P_{\alpha}\colon P_{w}\to P_{v}\) is right multiplication by the arrow \(\alpha\)
and \(I^{!}_{x}(\alpha^{\mathrm{op}})\colon I^{!}_{x}(v)\to I^{!}_{x}(w)\) is the structure map of
\(I^{!}_{x}\) along \(\alpha^{\mathrm{op}}\) in \(\Lambda^{!}\).

Since \(\Lambda\) is quadratic monomial, the nonzero paths form \(k\)-bases of each
\(P_{u}\), and likewise the nonzero \(\Lambda^{!}\)-paths form \(k\)-bases of the
spaces \(I^{!}_{x}(u)\).
Fix vertices \(v,w\) and an elementary tensor
\(p \otimes q^{\mathrm{op}}\in P_{v}\langle n\rangle\otimes I^{!}_{x}(v)\),
where \(p\) is a (nonzero) path in \(\Lambda\) starting at \(v\), and
\(q^{\mathrm{op}}\) is a (nonzero) path in \(\Lambda^{!}\) starting at \(v\) and ending at \(x\).
Then
\[
d^{\,n}_{v,w}(p \otimes q^{\mathrm{op}})
=
\sum_{\alpha \colon w \to v}
\bigl(p\alpha\bigr) \,\otimes\,
I^{!}_{x}(\alpha^{\mathrm{op}})\bigl(q^{\mathrm{op}}\bigr).
\]
Suppose \(d^{\,n}_{v,w}(p \otimes q^{\mathrm{op}})=0\).
Because \(q^{\mathrm{op}}\neq 0\), there exists one arrow
\(\alpha^{\mathrm{op}}\colon v\to w\) in \(\Lambda^{!}\) for which
\(I^{!}_{x}(\alpha^{\mathrm{op}})\bigl(q^{\mathrm{op}}\bigr)\neq 0\).
For that \(\alpha\) we must then have \(p\alpha=0\) in \(\Lambda\).
By the monomial (path) nature of the relations, this forces a factorization
\(p=p'\beta\) with an arrow \(\beta\colon v\to u\) such that \(\beta\alpha=0\) in \(\Lambda\).

Consider
\(p' \otimes \bigl(q^{\mathrm{op}}\beta^{\mathrm{op}}\bigr)\in
\bigoplus_{u\in Q_{0}} P_{u}\langle n-1\rangle\otimes I^{!}_{x}(u)\).
A direct computation shows that
\[
d^{\,n+1}\bigl(p' \otimes q^{\mathrm{op}}\beta^{\mathrm{op}}\bigr)
= p \otimes q^{\mathrm{op}}.
\]
Thus \(\ker d^{\,n}\subseteq \mathrm{im}\, d^{\,n+1}\).
Since the argument applies to each elementary tensor in the chosen bases,
we conclude \(H^{n}\bigl(K(I^{!}_{x})\bigr)=0\) for all \(n\neq 0\).

To compute \(H^{0}\), note that the tail of the complex is
\[
\cdots \longrightarrow
 \bigoplus_{z \in Q_{0}} P_{z}\langle 1 \rangle \otimes I^{!}_{x}(z)
 \xrightarrow{d^{\,1}}
 P_{x} \otimes I^{!}_{x}(x)
 \longrightarrow 0 .
\]
Here \(I^{!}_{x}(x)\cong k\) is spanned by the trivial path at \(x\), and the image of
\(d^{\,1}\) is the graded radical of \(P_{x}\otimes I^{!}_{x}(x)\).
Hence the cokernel is canonically isomorphic to the simple \(S_{x}\) generated in
degree \(0\). Therefore \(H^{0}\bigl(K(I^{!}_{x})\bigr)\cong S_{x}\).

Finally, the differentials are homogeneous of internal degree \(0\) (after the displayed
shifts), and their matrices have entries in \(\Lambda_{1}\) (they are right multiplications
by arrows). Thus no degree–\(0\) isomorphism can occur in any differential, and the
resolution is minimal. This proves the claim.
\end{proof}
\medskip

We now recall the construction of the graded derived Koszul duality, starting from a cochain complex. Let
\[
\cdots \longrightarrow M^{p-1}
\xrightarrow{d^{p-1}} M^{p}
\xrightarrow{d^{p}} M^{p+1}
\longrightarrow \cdots
\]
be a bounded cochain complex in
\(\mathsf{C}^{b}\!\bigl(\Lambda^{!}\textup{-GMod}^{-,b}\bigr)\),
with each \(M^{p}\) right bounded in the internal grading, so that the complexes \(K(M^{p})\) have bounded cohomology.

For each \(p\), applying \(K\) to \(M^{p}\) yields a cochain complex
\[
\bigl(K(M^{p})^{q},\, d_{2}^{q,p}\bigr)_{q\in\mathbb{Z}}
\]
of graded projective \(\Lambda\)-modules with bounded cohomology. In this way, one obtains a double complex \(B^{\bullet,\bullet}\) with components
\[
B^{q,p}:=K(M^{p})^{q}
=\bigoplus_{u\in Q_{0}} P_{u}\langle q\rangle \otimes_{k} M^{p}_{q}(u),
\]
where \(M^{p}_{q}(u)\) denotes the homogeneous component of degree \(q\) at the vertex \(u\).

Adopting the cohomological convention in both directions, the horizontal differentials are given by
\[
d_{1}^{q,p} \colon B^{q,p}\longrightarrow B^{q,p+1}, \qquad
d_{1}^{q,p}
=
\bigoplus_{u\in Q_{0}}
\mathrm{id}_{P_{u}\langle q\rangle}\otimes (d^{p})_{q,u},
\]
and the vertical differentials by
\[
d_{2}^{q,p} \colon B^{q,p}\longrightarrow B^{q+1,p}.
\]
More precisely, the vertical differential is induced by the multiplication in the path algebra and is given on
\(P_{u}\langle -q\rangle \otimes M^{p}_{q}(u)\) by
\[
d_{2}^{q,p}
=
\sum_{\alpha \colon u \to v}
P_{\alpha} \otimes M^{p}(\alpha^{\mathrm{op}}),
\]
where \(P_{\alpha}(p)=p\alpha\), and
\[
M^{p}(\alpha^{\mathrm{op}})\colon M^{p}_{q}(u)\longrightarrow M^{p}_{q+1}(v)
\]
is homogeneous of internal degree \(1\).

\medskip

The associated total complex is given by
\[
\mathrm{Tot}(B)^{n}
=
\bigoplus_{p+q=n} B^{q,p}, \qquad
d_{\mathrm{Tot}}^{n}
=
\sum_{p+q=n} \bigl(d_{1}^{q,p}+(-1)^{p}d_{2}^{q,p}\bigr),
\]
and one readily verifies that \(d_{\mathrm{Tot}}^{2}=0\).

\medskip

This construction defines a \(k\)-linear functor
\[
\mathcal{F}\colon
\mathsf{C}^{b}\!\bigl(\Lambda^{!}\textup{-Cop}^{\mathbb{Z}}\bigr)
\longrightarrow
\mathsf{C}^{-,b}\!\bigl(\Lambda\textup{-proj}^{\mathbb{Z}}\bigr).
\]
Since \(\mathcal{F}\) preserves homotopies, it descends to homotopy categories and induces a triangulated functor
\[
\mathscr{F}\colon
\mathsf{K}^{b}\!\bigl(\Lambda^{!}\textup{-Cop}^{\mathbb{Z}}\bigr)
\longrightarrow
\mathsf{K}^{-,b}\!\bigl(\Lambda\textup{-proj}^{\mathbb{Z}}\bigr).
\]

Moreover, by the Acyclic Assembly Lemma (see~\cite{24}), together with the canonical equivalence
\[
\mathsf{K}^{-,b}\!\bigl(\Lambda\textup{-proj}^{\mathbb{Z}}\bigr)
\;\cong\;
\mathsf{D}^{b}\!\bigl(\Lambda\textup{-gmod}\bigr),
\]
the functor \(\mathscr{F}\) preserves acyclic complexes. Consequently, it descends to a triangulated functor
\[
\mathfrak{F}\colon
\mathsf{D}^{b}\!\bigl(\Lambda^{!}\textup{-Cop}^{\mathbb{Z}}\bigr)
\longrightarrow
\mathsf{D}^{b}\!\bigl(\Lambda\textup{-gmod}\bigr).
\]

\medskip

The quasi-inverse functor
\[
\mathfrak{G}\colon
\mathsf{D}^{b}\!\bigl(\Lambda\textup{-gmod}\bigr)
\longrightarrow
\mathsf{D}^{b}\!\bigl(\Lambda^{!}\textup{-Cop}^{\mathbb{Z}}\bigr)
\]
is defined dually. To a finite-dimensional graded \(\Lambda\)-module \(N\), we associate a bounded cochain complex \(G(N)\) with components
\[
G(N)^{n}
=
\bigoplus_{x\in Q_{0}} I^{!}_{x}\langle n\rangle \otimes_{k} N_{n}(x),
\]
where \(I^{!}_{x}\) denotes the indecomposable graded injective \(\Lambda^{!}\)-module corresponding to the vertex \(x\).

The differentials
\[
d^{n}\colon G(N)^{n}\longrightarrow G(N)^{n+1}
\]
are defined componentwise by
\[
d^{n}
=
\sum_{\alpha\colon y\to x}
I^{!}_{\alpha^{\mathrm{op}}}\otimes N(\alpha),
\]
where the morphism
\[
I^{!}_{\alpha^{\mathrm{op}}}\colon I^{!}_{x}\longrightarrow I^{!}_{y}
\]
is induced by precomposition with \(\alpha^{\mathrm{op}}\). Explicitly, under the identification of \(I^{!}_{x}(u)\) with the space spanned by paths from \(u\) to \(x\), it is given by
\[
q \longmapsto
\begin{cases}
q' & \text{if } q = \alpha^{\mathrm{op}} q',\\
0  & \text{otherwise}.
\end{cases}
\]

We shall refer to the functor \(G\) as the \emph{coKoszul functor}.

\medskip 

This construction extends to complexes and yields a \(k\)-linear functor
\[
\mathcal{G}\colon
\mathsf{C}^{b}\!\bigl(\Lambda\textup{-gmod}\bigr)
\longrightarrow
\mathsf{C}^{b}\!\bigl(\Lambda^{!}\textup{-Inj}^{\mathbb{Z}}\bigr),
\]
whose image lies in the full subcategory
\[
\mathcal{CLC}^{b}\!\bigl(\Lambda^{!}\textup{-Inj}^{\mathbb{Z}}\bigr)
\]
of colinear complexes of injectives.

As in the case of \(\mathcal{F}\), the functor \(\mathcal{G}\) preserves homotopies. Consequently, it descends to homotopy categories and induces a triangulated functor
\[
\mathscr{G}\colon
\mathsf{K}^{b}\!\bigl(\Lambda\textup{-gmod}\bigr)
\longrightarrow
\mathsf{K}^{b}\!\bigl(\Lambda^{!}\textup{-Inj}^{\mathbb{Z}}\bigr).
\]

Moreover, since \(G\) sends acyclic complexes to acyclic complexes, the functor \(\mathscr{G}\) descends to derived categories, yielding a triangulated functor
\[
\mathfrak{G}\colon
\mathsf{D}^{b}\!\bigl(\Lambda\textup{-gmod}\bigr)
\longrightarrow
\mathsf{D}^{b}\!\bigl(\Lambda^{!}\textup{-Cop}^{\mathbb{Z}}\bigr).
\]

Finally, as in Theorem~2.3, the functor \(\mathfrak{G}\) sends projective modules \(P_x\) to the minimal injective coresolution of the simple module \(S^{!}_x\). The functors \(\mathfrak{F}\) and \(\mathfrak{G}\) are quasi-inverse equivalences.

\medskip

Let \(M=\bigoplus_{i\in\mathbb Z} M_i\) be a locally finite-dimensional right bounded graded
\(\Lambda^{!}\)-module, and let \(r\in\mathbb Z\). The \emph{truncation of \(M\) at degree \(r\)} is the graded module
\[
\tau_{\le r}M
=
\bigoplus_{i\le r} M_i,
\]
whose graded components are given by
\[
(\tau_{\le r}M)_i =
\begin{cases}
M_i & \text{if } i\le r,\\
0   & \text{if } i>r.
\end{cases}
\]
We also set
\[
\tau_{> r}M
=
\bigoplus_{i> r} M_i.
\]
Since \(M\) is right bounded and locally finite-dimensional, the module \(\tau_{> r}M\) is finite-dimensional, while \(\tau_{\le r}M\) is again right bounded. Moreover, these modules fit into a short exact sequence in \(\Lambda^{!}\textup{-GMod}^{-}\):
\[
0 \longrightarrow \tau_{> r}M \longrightarrow M \longrightarrow \tau_{\le r}M \longrightarrow 0.
\]

\medskip

Dually, let \(N=\bigoplus_{i\in\mathbb Z} N_i\) be a locally finite-dimensional left bounded graded
\(\Lambda\)-module, and let \(r\in\mathbb Z\). The \emph{truncation of \(N\) at degree \(r\)} is the graded module
\[
\tau_{\ge r}N
=
\bigoplus_{i\ge r} N_i,
\]
whose graded components are given by
\[
(\tau_{\ge r}N)_i =
\begin{cases}
N_i & \text{if } i\ge r,\\
0   & \text{if } i<r.
\end{cases}
\]
We also set
\[
\tau_{< r}N
=
\bigoplus_{i< r} N_i.
\]
Then \(\tau_{< r}N\) is finite-dimensional, while \(\tau_{\ge r}N\) is again left bounded, and we obtain a short exact sequence in \(\Lambda\textup{-GMod}^{+}\):
\[
0 \longrightarrow \tau_{< r}N \longrightarrow N \longrightarrow \tau_{\ge r}N \longrightarrow 0.
\]

\medskip

Let \(M \in \Lambda\textup{-GMod}^{+}\) (respectively, \(M \in \Lambda^{!}\textup{-GMod}^{-}\)). We say that \(M\) admits a \emph{linear truncation} (respectively, a \emph{colinear truncation}) if there exists an integer \(r\in\mathbb Z\) such that
\[
\tau_{\ge r}M\langle r\rangle
\qquad
(\textup{respectively, }\tau_{\le r}M\langle -r\rangle)
\]
is Koszul (respectively, coKoszul).

\medskip 

We next recall the notion of absolutely Koszul algebras, introduced by Iyengar and R\"omer in the commutative setting~\cite{19}. We show that noncommutative quadratic monomial algebras are absolutely Koszul and, moreover, have global linearity defect at most one. As a consequence, every finitely presented module over such an algebra has a rational Poincar\'e series.

\medskip

We also note that commutative quadratic monomial algebras are not absolutely Koszul in general; see, for example,~\cite{28}. It remains an open problem to classify commutative quadratic monomial algebras with respect to the absolutely Koszul property; see~\cite[Question~4.15]{8}.

\medskip

Let \( M \) be a finitely presented graded \( \Lambda \)-module, and let
\[
\cdots \longrightarrow P^{-k} \longrightarrow \cdots \longrightarrow P^{-1} \longrightarrow P^{0} \longrightarrow M \longrightarrow 0
\]
be a minimal graded projective resolution of \( M \).

Since \( \Lambda \) is graded, each differential \( d^{i} \colon P^{i} \to P^{i+1} \) decomposes into homogeneous components with respect to the internal grading,
\[
d^{i} = \sum_{j \ge 1} d^{i}_{j},
\]
where \( d^{i}_{j} \) is homogeneous of degree \( j \).

The \emph{linear part} of \( P^{\bullet} \), denoted \( \mathrm{Lin}^{\Lambda}(P^{\bullet}) \), is the complex with the same underlying graded modules as \( P^{\bullet} \), and whose differentials are given by the degree-one components
\[
d^{i}_{\mathrm{lin}} := d^{i}_{1}.
\]
Equivalently, \( \mathrm{Lin}^{\Lambda}(P^{\bullet}) \) is obtained from \( P^{\bullet} \) by discarding all homogeneous components of the differentials of degree greater than one.

Since \(\Lambda\) is given by a quiver with relations, each homogeneous component \(d^{i}_{j}\) is induced by multiplication by paths of length \(j\). In particular, the degree-one components \(d^{i}_{1}\) are induced by multiplication by arrows in the quiver of \(\Lambda\). It follows that \(\mathrm{Lin}^{\Lambda}(P^{\bullet})\) is a complex of graded projective modules.

The notion of the linear part was introduced by Eisenbud, Fl{\o}ystad, and Schreyer in the context of free resolutions over exterior algebras; see~\cite{10}.

\medskip

The \emph{linearity defect} of \(M\) is defined to be 
\[
\mathrm{ld}_{\Lambda}(M)
=
\sup\Bigl\{
i 
\;\Big|\;
H^{-i}\bigl(\mathrm{Lin}^{\Lambda}(P^{\bullet})\bigr)\neq 0
\Bigr\}.
\]

\medskip

The \emph{global linearity defect} of \(\Lambda\) is defined by
\[
\mathrm{gl\,ld}_{\Lambda}
=
\sup\Bigl\{
\mathrm{ld}_{\Lambda}(M)
\;\Big|\;
M\in \Lambda\textup{-GMod} \text{ is finitely presented}
\Bigr\}.
\]

\medskip

A graded \(\Lambda\)-module \(M\) is said to be \emph{weakly Koszul} if the linear part of its minimal projective resolution is exact. In particular,
\[
\mathrm{ld}_{\Lambda}(M)=0
\quad \text{if and only if \(M\) is weakly Koszul}.
\]

Weakly Koszul modules were termed \emph{Koszul} in~\cite{16}. However, since we work in the noncommutative setting, we follow the terminology in~\cite{24}.
\section{Main Results}

In this section, we establish the main results on Koszul duality stated in the introduction. These results show that quadratic monomial algebras exhibit particularly well-behaved duality properties, despite the fact that their Koszul duals are infinite-dimensional.

\subsection{Graded modules over quadratic monomial algebras}

The main objects of this paper are certain pairs of distinguished families
of left \(\Lambda\)-modules of the form \(\Lambda \alpha\), together with
corresponding submodules of indecomposable injective left
\(\Lambda^{!}\)-modules naturally associated with the arrows of the quiver
of \(\Lambda^{!}\).

More precisely, for each vertex \(x \in Q_{0}\), let \(I^{!}_{x}\) denote
the indecomposable injective left \(\Lambda^{!}\)-module corresponding to \(x\).
Given an arrow \(\alpha^{\mathrm{op}} \colon y \to x\) in the quiver of
\(\Lambda^{!}\), we consider the submodule
\[
\alpha^{\mathrm{op}} I^{!}_{x} \;\subseteq\; I^{!}_{x}
\]
generated by the image of \(\alpha^{\mathrm{op}}\).

Equivalently, this submodule consists of all elements of \(I^{!}_{x}\)
represented by paths in the opposite quiver that factor through
\(\alpha^{\mathrm{op}}\). That is, if \(z \in Q_{0}\) and \(q^{\mathrm{op}}\)
is a nonzero path in the opposite quiver from \(z\) to \(x\), then
\[
q^{\mathrm{op}} \in \alpha^{\mathrm{op}} I^{!}_{x}
\quad \Longleftrightarrow \quad
q^{\mathrm{op}} = \alpha^{\mathrm{op}} p^{\mathrm{op}}
\]
for some path \(p^{\mathrm{op}}\) in \(\Lambda^{!}\) with source
\(s(p^{\mathrm{op}})=z\).

In this way, we obtain a family of submodules
\[
\bigl\{\, \alpha^{\mathrm{op}} I^{!}_{x}
\;\big|\;
\alpha^{\mathrm{op}} \colon y \to x,\ x \in Q_{0}
\,\bigr\}.
\]

The following proposition plays a central role in this paper. Its proof is
similar to that of Theorem~2.3.

\begin{proposition}
The Koszul functor
\[
K \colon \Lambda^{!}\textup{-GMod} \longrightarrow
\mathcal{LC}\!\bigl(\Lambda\textup{-proj}^{\mathbb{Z}}\bigr)
\]
sends the module \(\alpha^{\mathrm{op}} I_x^!\) to a minimal projective resolution of the \(\Lambda\)-module \(\Lambda \alpha\). In particular, \(\Lambda \alpha\) is linear.

Dually, the coKoszul functor
\[
G \colon \Lambda\textup{-GMod} \longrightarrow
\mathcal{CLC}\!\bigl(\Lambda^{!}\textup{-Inj}^{\mathbb{Z}}\bigr)
\]
sends the module \(\Lambda \alpha\) to a minimal injective coresolution of the \(\Lambda^!\)-module \(\alpha^{\mathrm{op}} I_x^!\). In particular, \(\alpha^{\mathrm{op}} I_x^!\) is colinear.
\end{proposition}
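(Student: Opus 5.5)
We follow the proof of Theorem~2.3, replacing \(I^{!}_{x}\) by its submodule \(\alpha^{\mathrm{op}}I^{!}_{x}\). We work with bases throughout. As a graded vector space, \(\alpha^{\mathrm{op}}I^{!}_{x}(v)\) has basis the nonzero \(\Lambda^{!}\)-paths \(q^{\mathrm{op}}\) from \(v\) to \(x\) that factor through \(\alpha^{\mathrm{op}}\). After the grading normalization used in Theorem~2.3, these sit in degrees \(n\ge 1\), and the unique basis element in the top degree is \(\alpha^{\mathrm{op}}\) itself, located at the vertex \(y\). Hence \(K(\alpha^{\mathrm{op}}I^{!}_{x})\) is a linear complex whose last nonzero term is \(P_{y}\langle 1\rangle\otimes k\alpha^{\mathrm{op}}\), in cohomological degree \(1\) under the same indexing. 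The claim is that this complex, shifted so that its last term sits in degree \(0\), resolves \(\Lambda\alpha\cong P_y\alpha\)-type image.

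\textbf{Step 1: the last cohomology group.} The last differential is the sum of the maps \(P_{\beta}\) over arrows \(\beta\) with \(\alpha^{\mathrm{op}}\beta^{\mathrm{op}}\neq 0\) in \(\Lambda^{!}\), that is, over those \(\beta\) with \(\beta\alpha\notin I\). So its image in \(P_y\) is \(\sum_{\beta\alpha\ne 0}\Lambda\beta\). On the other hand, right multiplication by \(\alpha\) gives a surjection \(P_y\to\Lambda\alpha\) (up to shift). Because the relations are monomial, a path \(p\) satisfies \(p\alpha=0\) exactly when \(p=p'\beta\) for some arrow \(\beta\) with \(\beta\alpha\in I\). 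Hence
\[
\ker\bigl(P_y\to\Lambda\alpha\bigr)=\sum_{\beta\alpha\in I}\Lambda\beta .
\]
We then need to compare this kernel with the image of the last differential; this is the point where the statement's conventions (the arrow direction, \(I^!\) consisting of the \(\alpha^{\mathrm{op}}\beta^{\mathrm{op}}\) with \(\beta\alpha\notin I\)) must be tracked carefully. With the module structure on \(I^{!}_{x}\) given by precomposition, as for \(I^{!}_{\alpha^{\mathrm{op}}}\), the surviving arrows are exactly the \(\beta\) with \(\beta\alpha\in I\), so the image equals the kernel. This identifies the last cohomology with \(\Lambda\alpha\), with generator in degree \(1\).

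\textbf{Step 2: exactness elsewhere.} We reproduce the elementary-tensor argument of Theorem~2.3. Take a basis element \(p\otimes q^{\mathrm{op}}\) with \(q^{\mathrm{op}}=\alpha^{\mathrm{op}}q'^{\mathrm{op}}\) and \(q^{\mathrm{op}}\neq\alpha^{\mathrm{op}}\), and suppose its contribution to the differential vanishes. As in Theorem~2.3, monomiality gives a factorization \(p=p'\beta\), and then \(p'\otimes q^{\mathrm{op}}\beta^{\mathrm{op}}\) is a preimage. The key observation is that \(q^{\mathrm{op}}\beta^{\mathrm{op}}\) still factors through \(\alpha^{\mathrm{op}}\), so the preimage lies in \(K(\alpha^{\mathrm{op}}I^{!}_{x})\) and not merely in \(K(I^{!}_{x})\). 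This is the reason the submodule is closed under the relevant operations.

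The \textbf{main obstacle} is making the argument for a single elementary tensor valid for arbitrary linear combinations in the kernel. We would handle this by grading by the pair \((\text{path } p\alpha\ldots,\ q)\): since every basis element maps to a sum of basis tensors with distinct labels, the kernel is spanned by basis tensors. We would also need this at the boundary degree next to the top, where the cokernel computation of Step 1 takes over.

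\textbf{Step 3: minimality and the dual statement.} The differentials have entries in \(\Lambda_1\), so the resolution is minimal, and it is linear; hence \(\Lambda\alpha\) is linear (Koszul after a shift by \(\langle -1\rangle\)). For the dual statement, \(\Lambda\alpha\) is finite-dimensional with basis the nonzero paths \(p\alpha\). The complex \(G(\Lambda\alpha)\) consists of colinear injectives \(I^{!}_{u}\langle n\rangle\otimes(\Lambda\alpha)_n(u)\), and the same pair of computations applies with arrows reversed. The \(H^0\) computation shows that the kernel of the first differential \(I^{!}_{y}\to\bigoplus I^{!}_{u}\) consists of the paths whose cut-off by every \(\beta\) with \(\beta\alpha\neq 0\) vanishes; this is precisely \(\alpha^{\mathrm{op}}I^{!}_{x}\), realized inside \(I^!_y\) via \(\alpha^{\mathrm{op}}\). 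Exactness in higher degrees follows by the dual factorization argument. Alternatively, exactness follows directly from the quasi-inverse equivalences \(\mathfrak F,\mathfrak G\): since \(\mathfrak F(\alpha^{\mathrm{op}}I^{!}_{x})\simeq\Lambda\alpha\), we get \(\mathfrak G(\Lambda\alpha)\simeq\alpha^{\mathrm{op}}I^{!}_{x}\), and \(G(\Lambda\alpha)\) is a bounded complex of injectives concentrated colinearly, hence a minimal injective coresolution. This shows that \(\alpha^{\mathrm{op}}I^{!}_{x}\) is colinear.
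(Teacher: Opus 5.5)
Your argument is the paper's. You rerun the elementary-tensor argument of Theorem~2.3 on \(\alpha^{\mathrm{op}}I^{!}_{x}\), identify the last cohomology as \(\operatorname{coker}\bigl(d\colon\cdots\to P_y\langle 1\rangle\bigr)\cong\Lambda\alpha\), get minimality from entries in \(\Lambda_1\), and dualize. Your observation that the lift \(p'\otimes q^{\mathrm{op}}\beta^{\mathrm{op}}\) still factors through \(\alpha^{\mathrm{op}}\) is exactly what the paper's ``same argument'' needs. Two points need fixing.

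\textbf{Step~1 starts backwards.} The resulting mismatch is not a matter of conventions. By definition, \(\alpha^{\mathrm{op}}\beta^{\mathrm{op}}\in I^{!}\) if and only if \(\beta\alpha\notin I\). So \(\alpha^{\mathrm{op}}\beta^{\mathrm{op}}\neq 0\) in \(\Lambda^{!}\) exactly when \(\beta\alpha\in I\), not when \(\beta\alpha\notin I\) as you first write. Hence the degree-\(2\) component of \(\alpha^{\mathrm{op}}I^{!}_{x}\) is spanned by the \(\alpha^{\mathrm{op}}\beta^{\mathrm{op}}\) with \(\beta\alpha\in I\). The image of the last differential is then \(\bigoplus_{\beta\alpha\in I}\Lambda\beta\langle 1\rangle=\ker\bigl(P_y\langle 1\rangle\to\Lambda\alpha\bigr)\) directly, whichever side the action is written on. As written, Step~1 asserts two incompatible images, and appealing to ``precomposition'' is not what reconciles them.

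\textbf{The linear-combination issue is real.} You are right to flag it, and the paper passes over it too. It is settled cleanly as follows:
\begin{itemize}
\item \(d(p\otimes q^{\mathrm{op}})\) is either \(0\) or a single basis tensor, because only one end arrow of \(q^{\mathrm{op}}\) can be stripped;
\item distinct basis tensors never have the same nonzero image, because the image determines the arrow that was moved across the tensor sign.
\end{itemize}
So the nonzero images of basis tensors are linearly independent. Therefore \(\ker d\) is spanned by the basis tensors that \(d\) kills, and the factorization argument applies to each of them.

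Your alternative proof of the dual statement via the quasi-inverse pair \(\mathfrak{F},\mathfrak{G}\) is also fine. Steps~1--2 show that \(K(\alpha^{\mathrm{op}}I^{!}_{x})\) is bounded on the correct side with bounded cohomology, so \(\alpha^{\mathrm{op}}I^{!}_{x}\) lies in \(\Lambda^{!}\textup{-GMod}^{-,b}=\Lambda^{!}\textup{-Cop}^{\mathbb{Z}}\), where \(\mathfrak{F}\) applies.
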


\begin{proof}
We prove only the first statement, the second being dual. The argument is entirely analogous to that of Theorem~2.3, with \(I_x^!\) replaced by the submodule \(\alpha^{\mathrm{op}} I_x^!\) and \(S_x\) replaced by \(\Lambda \alpha\).

The cochain complex \(K(\alpha^{\mathrm{op}} I_x^!)\) has terms
{\scriptsize
\[
\cdots \longrightarrow
 \bigoplus_{a \in Q_{0}} P_{a}\langle n-1 \rangle \otimes (\alpha^{\mathrm{op}} I_x^!)_{n-1}(a)
 \longrightarrow
 \bigoplus_{b \in Q_{0}} P_{b}\langle n \rangle \otimes (\alpha^{\mathrm{op}} I_x^!)_{n}(b)
 \longrightarrow
 \bigoplus_{c \in Q_{0}} P_{c}\langle n+1 \rangle \otimes (\alpha^{\mathrm{op}} I_x^!)_{n+1}(c)
 \longrightarrow \cdots .
\]
}

Since \(\Lambda\) is quadratic monomial, the nonzero paths form \(k\)-bases of the graded components of the projective modules \(P_a\), and the nonzero opposite paths factoring through \(\alpha^{\mathrm{op}}\) form \(k\)-bases of the graded components of \(\alpha^{\mathrm{op}} I_x^!\). The same combinatorial argument as in Theorem~2.3 shows that this complex is exact in every nonzero degree.

It remains to compute the cohomology in degree \(1\). Consider the tail
\[
\cdots \longrightarrow
 \bigoplus_{z \in Q_{0}} P_{z}\langle 2 \rangle \otimes (\alpha^{\mathrm{op}} I_x^!)_{2}(z)
 \xrightarrow{d^{2}}
 P_{y}\langle 1 \rangle \otimes (\alpha^{\mathrm{op}} I_x^!)_{1}(y)
 \longrightarrow 0 .
\]
Since \((\alpha^{\mathrm{op}} I_x^!)_{1}(y)\cong k\), this identifies with
\[
\cdots \longrightarrow
 \bigoplus_{z \in Q_{0}} P_{z}\langle 2 \rangle \otimes (\alpha^{\mathrm{op}} I_x^!)_{2}(z)
 \xrightarrow{d^{2}}
 P_{y}\langle 1 \rangle 
 \longrightarrow 0 .
\]

By construction, the image of \(d^{2}\) is
\[
\bigoplus_{\beta \colon y \to z,\ \beta\alpha=0} \Lambda \beta\langle 1 \rangle ,
\]
that is, the sum of all paths \(\beta\) such that \(\beta\alpha=0\) in \(\Lambda\). This coincides with the kernel of the canonical morphism
\[
P_{y}\langle 1 \rangle \longrightarrow \Lambda \alpha,
\qquad e_{y} \longmapsto \alpha.
\]
Thus the cokernel is isomorphic to the left ideal \(\Lambda \alpha\), and hence
\[
H^{1}\bigl(K(\alpha^{\mathrm{op}} I_x^!)\bigr)\cong \Lambda\alpha,
\qquad
H^{n}\bigl(K(\alpha^{\mathrm{op}} I_x^!)\bigr)=0 \ \text{for } n\neq 1.
\]
\end{proof}

We shall require the following lemma.

\begin{lemma}
Let
\[
P^{1} \xrightarrow{d} P^{0} \longrightarrow M \longrightarrow 0
\]
be a minimal projective presentation of a finitely presented graded \(\Lambda\)-module \(M\), where each projective is a finite direct sum of indecomposable modules of the form \(P_x\langle n\rangle\). Then
\[
\ker d \;=\;
\bigoplus_{\substack{\alpha \colon x \to y }}
\Lambda \alpha\langle i \rangle ,
\]
\end{lemma}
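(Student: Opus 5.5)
The idea is that for a monomial algebra, the kernel of a map between graded projectives is a direct sum of left ideals generated by arrows, each appearing with a suitable shift; the lemma is to be read with a multiset of arrows \(\alpha\) and shifts \(i\). I would first reduce to the case of a single indecomposable target. By minimality, \(d\) has entries in the graded radical. Write \(P^{0}=\bigoplus_j P_{x_j}\langle n_j\rangle\). Every element of \(P^{1}\) is a \(k\)-combination of paths, and \(d\) is given by a matrix of homogeneous elements of \(\Lambda_{\ge 1}\), that is, linear combinations of paths of length at least one.

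The key structural input I would use is the standard fact about monomial algebras, in the spirit of Zimmermann-Huisgen and Green--Happel--Zacharia. A submodule of a projective \(\Lambda\)-module has a basis-type decomposition into cyclic modules \(\Lambda p\). In particular, the first syzygy \(\Omega M=\operatorname{im} d\subseteq \operatorname{rad}P^{0}\) decomposes as a direct sum of modules of the form \(\Lambda p\langle n\rangle\), where each \(p\) is a nonzero path of length at least \(1\).

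Concretely, I would run a Gröbner/leading-path reduction. Order the paths by length and then lexicographically. Perform column operations, which amounts to changing the basis of \(P^{1}\) by graded automorphisms. The goal is to make the images of the generators of \(P^{1}\) have pairwise distinct leading paths, each of which does not left-divide another. After this, each nonzero image generates a summand \(\Lambda p\) of \(\operatorname{im} d\), and generators mapping to zero are excluded by minimality.

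For each such summand \(\Lambda p\) with \(p=\alpha p'\), where \(\alpha\) is the last arrow in the paper's composition convention, the annihilator computation from the proof of Proposition~3.1 applies. Since relations are quadratic monomials, the left annihilator of \(p\) in \(P_{s(p)}\) is exactly the left annihilator of its first arrow \(\alpha\). Indeed, a path \(q\) satisfies \(qp=0\) if and only if the junction quadratic \(q_{\text{last}}\alpha\) is a relation, because every other length-two subword of \(qp\) already lies in \(p\) or \(q\), both of which are nonzero. Hence
\[
\Lambda p\cong \Lambda\alpha\langle \ell(p)-1\rangle,
\]
and therefore \(\ker(P_{s(p)}\langle n\rangle\to\Lambda p\langle n\rangle)=\sum_{\beta\alpha=0}\Lambda\beta\langle n\rangle\).

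This last sum is a direct sum. Indeed, the distinct arrows \(\beta\) starting at a fixed vertex generate left ideals with disjoint path bases, again by the monomial property. Combining this with the direct-sum decomposition of \(\operatorname{im}d\), so that \(\ker d\) splits as the sum of the kernels of the individual summand maps, gives
\[
\ker d=\bigoplus_{\alpha\colon x\to y}\Lambda\alpha\langle i\rangle.
\]

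The main obstacle is the reduction step, namely showing that \(\operatorname{im} d\) can be put in a form where the cyclic summands are generated by single paths rather than linear combinations. For example, \(d\) might send a generator to \(\alpha+\gamma\) with two different arrows. I would handle this by treating \(\Lambda(\alpha+\gamma)\) directly. Its annihilator is the intersection of the annihilators of \(\alpha\) and \(\gamma\), and by monomiality this is again spanned by those arrows \(\beta\) killing both. So the kernel is still a sum of ideals \(\Lambda\beta\). This is the form I would actually aim for: \(\ker d\) is generated by paths, hence by arrows, and distinct arrows give independent summands.

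Thus the cleanest route is to prove directly that \(\ker d\) is spanned by paths. I would do this by showing that the kernel is homogeneous with respect to the fine grading by paths, which is where the monomial hypothesis enters. Then one reads off the minimal generators, which are arrows, and uses the disjointness of the path bases of the \(\Lambda\beta\) to get directness. I expect this fine-grading argument to need care when different generators of \(P^{1}\) share leading paths, and that is where I would spend the effort.
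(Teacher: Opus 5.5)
Your plan breaks down at its central step. The decomposition into modules $\Lambda p$ from the work of Zimmermann-Huisgen and of Green--Kirkman--Kuzmanovich is a theorem about \emph{second} syzygies. For first syzygies it is false: $\operatorname{im} d=\Omega M$ need not be a direct sum of modules $\Lambda p\langle n\rangle$, or even of cyclic modules.

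Take the quiver with vertices $x,v,w_1,w_2,w_3$, arrows $\alpha_1,\alpha_2,\alpha_3\colon x\to v$ and $\beta_j\colon v\to w_j$, and relations $\alpha_i\beta_j=0$ for $i\neq j$ (paths composed as in $P_x(\alpha)(p)=p\alpha$). Let $U\subseteq P_x$ be generated by $u_1=\alpha_1-\alpha_2$ and $u_2=\alpha_2-\alpha_3$, and let $M=P_x/U$. Its minimal presentation is $d\colon P_v\langle 1\rangle^{2}\to P_x$, sending the generators $g_1,g_2$ to $u_1,u_2$. Then $U$ has basis $u_1,u_2,\alpha_1\beta_1,\alpha_2\beta_2,\alpha_3\beta_3$, and the maps $U(\beta_j)\colon U(v)\to U(w_j)$ have the pairwise distinct kernels $ku_2$, $k(u_1+u_2)$, $ku_1$. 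A decomposition of $U$ would split $U(v)=kw'\oplus kw''$ with each of these three kernels equal to $kw'$ or $kw''$, which is impossible. So $U$ is indecomposable with top $S_v^{2}$, and no change of basis of $P^1$ yields $\operatorname{im} d=\bigoplus_j d(g_j)\Lambda$. Leading paths are in any case the wrong invariant in a monomial algebra: if $\alpha_1$ is the leading path of $u_1$, then $\alpha_1\beta_2=0$ while $u_1\beta_2=-\alpha_2\beta_2\neq 0$.

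Your argument computes the sum of the kernels of the individual maps $P_{y_j}\langle n_j\rangle\to d(g_j)\Lambda$, whether via the image decomposition or via your treatment of $\Lambda(\alpha+\gamma)$. This misses the relations between \emph{different} generators, and these are the real content of the lemma. In the example, $\ker d$ is spanned by $g_2\beta_1$, $(g_1+g_2)\beta_2$, $g_1\beta_3$. It is a sum of three simples, each a shift of $\Lambda\beta_j$, so the lemma holds. But for every basis $g_1',g_2'$ of generators, each $d(g_j')$ is killed by at most one $\beta_l$, so the sum of the individual kernels has dimension at most $2<3=\dim\ker d$.

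For the same reason, no basis of $P^1$ makes $\ker d$ spanned by single paths in single summands. Your fallback can therefore hold only in a weaker sense: $\ker d$ is homogeneous for the grading of $P^1$ by paths, where a path may occur in several summands at once. Even that holds only after a normalization of $d$ that you do not supply, and it is not routine when generators lie in different degrees. In another quiver, with arrows $r\colon x\to a$, $\gamma\colon a\to b$, $s\colon x\to b$, $p'\colon b\to c$ and the single relation $sp'=0$, the minimal map $g_1\mapsto r$, $g_2\mapsto r\gamma+s$ has $g_1\gamma p'-g_2p'$ in its kernel. This element mixes two different paths until $g_2$ is replaced by $g_2-g_1\gamma$.

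The simplest repair is to invoke the second-syzygy theorem where it actually applies, namely to $\ker d=\Omega^{2}M$. Your junction computation $\Lambda p\cong\Lambda\alpha\langle\ell(p)-1\rangle$, which is correct, then completes the proof. The paper, by contrast, never decomposes $\operatorname{im} d$: it takes a kernel element lying in one summand $P_y\langle n\rangle$ and factors it through an arrow killed by the entries of $d$. Elements such as $(g_1+g_2)\beta_2$, spread over several summands, have to be accounted for on that route as well.
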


\begin{proof}
Write
\[
P^{1} = \bigoplus_{y \in Q_0,\; n \in \mathbb{Z}} P_y\langle n\rangle,
\qquad
P^{0} = \bigoplus_{x \in Q_0,\; m \in \mathbb{Z}} P_x\langle m\rangle.
\]
The differential \(d \colon P^{-1} \to P^{0}\) is given componentwise by morphisms
\[
P_y\langle n\rangle \longrightarrow P_x\langle m\rangle,
\]
that is, by finite sums of the form
\[
\sum_{q \colon x \to y} P_q,
\]
where \(P_q\) denotes right multiplication by the path \(q\) (and arrows correspond to paths of length one).

Let \(p \in P_y\langle n\rangle\) be a homogeneous element such that \(d(p)=0\). Since \(\Lambda\) is a quadratic monomial algebra, it follows that for every path \(q\) from \(x\) to \(y\) occurring in the expression of \(d\), one has
\[
p q = 0 \quad \text{in } \Lambda.
\]
Thus there exist an arrow \(\alpha \colon y \to z\) and a path \(p'\) such that
\[
p = p' \alpha,
\qquad
\alpha q = 0.
\]

It follows that the kernel is a finite direct sum of principal ideals generated by arrows \(\alpha \colon y \to z\), that is,
\[
\ker d \;=\;
\bigoplus_{\substack{\alpha \colon y \to z \\ i \in \mathbb{Z} }}
\Lambda \alpha\langle i\rangle.
\]
\end{proof}
We are now ready to prove the first main result of this subsection.

\begin{theorem}
The algebra \(\Lambda\) is  left graded coherent (respectively, left graded cocoherent). 
\end{theorem}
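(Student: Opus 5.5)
We prove that \(\Lambda\textup{-Fp}^{\mathbb{Z}}\) is abelian. The standard criterion for this is that kernels of morphisms between finitely generated graded projectives are finitely generated, hence finitely presented. The dual statement, that \(\Lambda\textup{-Fcp}^{\mathbb{Z}}\) is abelian, then follows by applying the duality \(D=\operatorname{Hom}_k(-,k)\). Indeed, \(D\) exchanges left graded projectives over \(\Lambda\) with right graded injectives and preserves the quadratic monomial property, since \(\Lambda^{\mathrm{op}}\) is again quadratic monomial. So the whole argument reduces to the coherence statement for an arbitrary quadratic monomial algebra (finite-dimensional or locally finite-dimensional), applied to both \(\Lambda\) and \(\Lambda^{\mathrm{op}}\).

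\emph{Step 1.} Take a morphism \(d\colon P^{1}\to P^{0}\) between finite direct sums of shifts \(P_x\langle n\rangle\). After splitting off isomorphic summands, we may assume that \(d\) is radical, i.e.\ minimal.

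\emph{Step 2.} Apply Lemma~3.2: \(\ker d\) is a direct sum of modules \(\Lambda\alpha\langle i\rangle\) with \(\alpha\) an arrow. The sum is finite because \(P^1\) has finitely many indecomposable summands and each summand \(P_y\langle n\rangle\) admits only finitely many arrows \(\alpha\colon y\to z\).

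\emph{Step 3.} By Proposition~3.1, each \(\Lambda\alpha\) has a linear projective resolution \(K(\alpha^{\mathrm{op}}I^!_x)\). Its first two terms are finite: \(P_y\langle 1\rangle\) followed by \(\bigoplus P_z\langle 2\rangle\) over the arrows \(\beta\) with \(\beta\alpha=0\). Hence each \(\Lambda\alpha\) is finitely presented, and so is \(\ker d\).

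From these steps, the kernel of any morphism in \(\Lambda\textup{-Fp}^{\mathbb{Z}}\) is finitely presented, by the usual horseshoe and mapping-cone argument; cokernels of finitely presented modules are always finitely presented. This gives abelianness.

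The main obstacle is the rigor of Lemma~3.2, i.e.\ that the kernel really splits as a direct sum of cyclic modules \(\Lambda\alpha\). The difficulty is that \(d\) may involve sums of several paths on the same summand, and \(d\) mixes summands, so an element of the kernel need not be a single path. I would handle this as follows. Choose the path basis of \(P^1\). Note that \(d\) sends each path \(p\) to \(\sum p q_j\), and distinct nonzero products \(pq\) are linearly independent monomials in the path basis, since the relations are monomial. From this, a homogeneous kernel element must be a combination of paths \(p\) with \(pq_j=0\) for all \(j\); alternatively, one first performs a change of basis on \(P^1\) to put \(d\) in a form where this holds. Then each such path \(p\) ends in an arrow \(\alpha\) killed on the right by the relevant first arrows of the \(q_j\), which gives the decomposition.

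If that combinatorial reduction proves delicate, I would fall back on a weaker claim that suffices. It is enough that \(\ker d\) is generated by finitely many elements of the form \(p'\alpha\). Its syzygies can then be controlled by iterating the same monomial argument, which again produces submodules generated by arrows and hence finitely presented by Proposition~3.1.
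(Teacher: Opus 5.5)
Your proposal follows the paper's own proof: Lemma~3.2 decomposes the kernel of a map between finitely generated graded projectives into finitely many modules $\Lambda\alpha\langle i\rangle$, Proposition~3.1 makes each of these finitely presented, and co-coherence follows by duality (your explicit reduction via $D$ to left coherence of the quadratic monomial algebra $\Lambda^{\mathrm{op}}$ is what the paper's ``dual argument'' amounts to). Your worry about the rigor of Lemma~3.2 is legitimate, but your first suggested repair is false even for radical $d$: if $d$ has entries $q$ and $rq$ on two summands of $P^{1}$ (one path prolonging the other), then $d$ kills a mixed element built from $r$ and the generator of the second summand, and neither component of it is annihilated by its own entry, so only your change-of-basis route could close that step.
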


\begin{proof}
Let \(\Lambda\) be a quadratic monomial algebra. To prove coherence, it suffices to show that the kernel of any morphism between finitely generated projective \(\Lambda\)-modules is finitely presented.

Let \(f \colon P \to Q\) be a morphism between finitely generated projective \(\Lambda\)-modules. By Lemma~3.2, the kernel \(\ker(f)\) decomposes as a finite direct sum of cyclic submodules of the form \(\Lambda \alpha\).

By proposition 3.1, each module \(\Lambda \alpha\) admits a linear resolution, and in particular a linear presentation. It follows that each \(\Lambda \alpha\) is finitely presented. Since a finite direct sum of finitely presented modules is finitely presented, we conclude that \(\ker(f)\) is finitely presented.

Thus \(\Lambda\) is coherent. The dual argument yields cocoherence.
\end{proof}
The second main result shows that noncommutative quadratic monomial algebras exhibit favorable homological properties, as do their modules.

\begin{theorem}
The algebra $\Lambda$ is absolutely Koszul and satisfies
\[
\mathrm{gl\,ld}_{\Lambda} \le 1.
\]
\end{theorem}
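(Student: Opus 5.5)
The plan is to show that for every finitely presented graded \(\Lambda\)-module \(M\), the first syzygy in a minimal resolution is already a direct sum of shifted linear modules. Then the minimal resolution of \(M\) is, from homological degree one onward, a direct sum of shifted linear resolutions. Absolute Koszulity (every finitely presented module has finite linearity defect) and the bound \(\mathrm{gl\,ld}_\Lambda\le 1\) both follow from this.

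\textbf{Step 1.} Take a minimal graded projective presentation \(P^{-1}\xrightarrow{d}P^0\to M\to 0\) with \(P^{-1},P^0\) finite sums of shifts \(P_x\langle n\rangle\). By Lemma~3.2,
\[
\Omega^2 M=\ker d\cong \bigoplus_j \Lambda\alpha_j\langle i_j\rangle ,
\]
a finite direct sum of shifted cyclic modules generated by arrows.

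\textbf{Step 2.} By Proposition~3.1, each \(\Lambda\alpha_j\) has the linear minimal projective resolution \(K(\alpha_j^{\mathrm{op}}I^!_{x_j})\). Shifting appropriately, the minimal resolution of \(M\) is therefore
\[
P^\bullet:\ \cdots\to \bigoplus_j K(\alpha_j^{\mathrm{op}}I^!_{x_j})\langle i_j\rangle \to P^{-1}\to P^0 .
\]
Here we splice at \(\Omega^2M\), and minimality is preserved because all differentials have entries in the radical. Note that each summand is linear but shifted by its own \(i_j\), so the part of \(P^\bullet\) in homological degrees \(\le -2\) is a direct sum of complexes, each linear up to a shift.

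\textbf{Step 3.} Compute \(\mathrm{Lin}^\Lambda(P^\bullet)\). In degrees \(\le -2\), the differentials of each summand are already of degree one (they are right multiplications by arrows), and there are no cross terms between different summands. Hence \(\mathrm{Lin}\) agrees with \(P^\bullet\) there, and that part is exact in degrees \(\le -3\) with the relevant cohomology controlled at degree \(-2\).

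The main obstacle is the junction between homological degrees \(-2\) and \(-1\). The map \(\bigoplus_j P_{y_j}\langle\cdot\rangle\to P^{-1}\) sends generators to \(\alpha_j\), so it is linear. But \(\mathrm{Lin}(d)\) may discard higher-degree components of \(d\), so \(\ker(\mathrm{Lin}\,d)\) can be strictly larger than \(\operatorname{im}\) of the linear map. Consequently, homology of \(\mathrm{Lin}^\Lambda(P^\bullet)\) may appear at degrees \(-1\) and \(0\), and possibly at \(-2\).

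To handle this, I would argue with monomial bases, as in Lemma~3.2. The linear part \(d_1\) is again a map between projectives given by arrows, so Lemma~3.2 applies to it: \(\ker d_1\) is a sum of modules \(\Lambda\beta\) with \(\beta\) an arrow killing all arrows appearing in \(d_1\). I would then show that \(\ker d_1\) restricted to the summands that receive the degree-\(-2\) term coincides with the image of \(d^{-2}\). The point is that for a generator \(e\) of \(P^{-1}\) on which \(d\) has a nonzero linear component, \(\ker d\) and \(\ker d_1\) are both \(\bigoplus\Lambda\beta\) over arrows \(\beta\) with \(\beta\gamma=0\) for the arrows \(\gamma\) occurring. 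For a generator on which \(d\) has only nonlinear components, \(d_1=0\) there, so that summand contributes only to \(H^{-1}\), not to \(H^{-2}\).

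If this works, \(H^{-i}(\mathrm{Lin}^\Lambda P^\bullet)=0\) for \(i\ge 2\), giving \(\mathrm{ld}_\Lambda(M)\le 1\) uniformly in \(M\). Hence \(\mathrm{gl\,ld}_\Lambda\le1\), and in particular \(\Lambda\) is absolutely Koszul.

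If the degree-\(-2\) comparison resists, the fallback is to replace \(M\) by \(\Omega^1M\). Its first syzygy is a sum of \(\Lambda\alpha\)'s, and its resolution is linear from degree one on after splitting off summands by degree. This yields \(\mathrm{ld}\,\Omega^1M\le1\) directly, and therefore finiteness of \(\mathrm{ld}\), that is, absolute Koszulity, at minimum.
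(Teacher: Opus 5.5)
Your Steps~1--2 follow the paper's route. Lemma~3.2 and Proposition~3.1 make $\Omega^2M=\ker(P^{-1}\to P^0)$ a finite direct sum of shifted linear modules, and the paper concludes $\mathrm{ld}_\Lambda(M)\le 1$ from this. Your opening plan speaks of the \emph{first} syzygy, but Lemma~3.2 only controls $\Omega^2M$. Weak Koszulity of $\Omega^2M$ by itself gives only $\mathrm{ld}_\Lambda(M)\le 2$.

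The gap is in Step~3, where you misplace the obstacle and leave the argument conditional. The group $H^{-2}(\mathrm{Lin}^\Lambda P^\bullet)=\ker\mathrm{Lin}(d^{-2})/\operatorname{im}\mathrm{Lin}(d^{-3})$ involves only the differentials entering and leaving $P^{-2}$; the map $d^{-1}\colon P^{-1}\to P^0$ plays no role. So if, as you assert, $d^{-2}$ sends each generator to an arrow $\alpha_j$, then $\mathrm{Lin}^\Lambda P^\bullet$ and $P^\bullet$ coincide in all degrees $\le -2$, \emph{including} the differential out of $P^{-2}$. Hence $H^{-i}(\mathrm{Lin}^\Lambda P^\bullet)=H^{-i}(P^\bullet)=0$ for $i\ge 2$, and $\mathrm{ld}_\Lambda(M)\le 1$ follows at once.

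Your planned comparison of $\ker(\mathrm{Lin}\,d^{-1})$ with $\operatorname{im}d^{-2}$ is about $H^{-1}$, which is irrelevant to the bound and can be nonzero. For example, if $d^{-1}$ is right multiplication by a path of length two, then $\mathrm{Lin}(d^{-1})=0$ and $H^{-1}\neq 0$. As written, the proposal ends with ``if this works''. The fallback only yields $\mathrm{ld}_\Lambda(\Omega^1M)\le 1$, i.e.\ $\mathrm{ld}_\Lambda(M)\le 2$. That gives absolute Koszulity, but not $\mathrm{gl\,ld}_\Lambda\le 1$.

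What genuinely needs proof is that $d^{-2}$ is linear after a suitable change of basis of $P^{-1}$, or at least that $\ker\mathrm{Lin}(d^{-2})=\ker d^{-2}$. This does not follow from the abstract isomorphism $\Omega^2M\cong\bigoplus_j\Lambda\alpha_j\langle i_j\rangle$. Suppose $P^{-1}$ has summands $P_{y_1}\langle n_1\rangle$ and $P_{y_2}\langle n_2\rangle$ generated in different degrees. Then a minimal generator of $\ker d^{-1}$ can have the form $(r\alpha,-\alpha)$ with $r$ a path of positive length. For instance, take $d^{-1}(e_{y_1})=u+v$ and $d^{-1}(e_{y_2})=ur+ws$ with relations $vr=0=s\alpha$. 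This generator becomes an arrow only after replacing $e_{y_2}$ by $e_{y_2}-r$.

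You need an argument that such a change of basis always exists. The natural tool is that in a quadratic monomial algebra a product of two nonzero paths vanishes exactly when the junction is a relation. This is the input the paper draws from the shape of the kernel in the proof of Lemma~3.2, and it is precisely what your proposal asserts without justification.
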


\begin{proof}
By Lemma~3.2, the second syzygy of any finitely presented graded
\(\Lambda\)-module is weakly Koszul. It follows that every such module
has linear defect at most one. In particular, \(\Lambda\) is absolutely
Koszul and satisfies \(\mathrm{gl\,ld}_{\Lambda} \le 1\).
\end{proof}
\begin{corollary}
If \(\Lambda\) has radical square zero, then
\[
\mathrm{gl\,ld}_{\Lambda} = 0.
\]
In particular, every finitely presented graded \(\Lambda\)-module is weakly Koszul.
\end{corollary}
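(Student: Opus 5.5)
The plan is to show directly that when \(\Lambda\) has radical square zero, the minimal graded projective resolution of every finitely presented graded \(\Lambda\)-module already coincides with its linear part, up to a harmless non-linear top. Radical square zero means every path of length two is zero in \(\Lambda\), so \(\Lambda = \Lambda_0 \oplus \Lambda_1\). In particular every nonzero homogeneous morphism \(P_y\langle n\rangle \to P_x\langle m\rangle\) is either an isomorphism (when \(n=m\) and \(x=y\)) or right multiplication by a linear combination of arrows, with \(n = m+1\). Hence in a minimal resolution every differential has entries in \(\Lambda_1\), so \(d^i = d^i_1\). The resolution therefore equals its own linear part, and \(\mathrm{Lin}^{\Lambda}(P^\bullet)\) is exact in all negative degrees.

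The argument proceeds in three steps. First, take \(M\) finitely presented and write the minimal resolution \(P^\bullet \to M\). This exists because \(\Lambda\) is graded coherent by Theorem~3.3, and its terms are finite sums of \(P_x\langle n\rangle\).

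Second, observe that minimality forbids degree-zero components between distinct summands of consecutive terms. Since \(\Lambda_{\ge 2}=0\), there are no components of degree at least two. Thus each \(d^i\) is homogeneous of degree one, so \(d^i_1 = d^i\) and \(\mathrm{Lin}^{\Lambda}(P^\bullet) = P^\bullet\).

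Third, conclude that \(H^{-i}(\mathrm{Lin}^{\Lambda}(P^\bullet)) = H^{-i}(P^\bullet) = 0\) for all \(i \ge 1\), so \(\mathrm{ld}_\Lambda(M) = 0\). Taking the supremum gives \(\mathrm{gl\,ld}_\Lambda = 0\), and the equivalence stated just before the section (\(\mathrm{ld}_\Lambda(M)=0\) if and only if \(M\) is weakly Koszul) gives the final assertion.

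As a consistency check, Lemma~3.2 and Proposition~3.1 give the same picture. The kernels are sums of \(\Lambda\alpha\langle i\rangle\), and with radical square zero \(\Lambda\alpha\) is simple, so the pieces are linear.

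The main obstacle is the convention at \(H^0\). If \(M\) has generators in several degrees, then \(P^0\) is not generated in a single degree, and one must check that the definition of linear part is insensitive to this. The definition only uses the decomposition \(d=\sum_j d_j\) by homogeneous degree of the matrix entries, not a global linearity of \(P^\bullet\), so this should cause no trouble. The only subtle point is confirming that no entry of a minimal differential can be a nonzero scalar, which is exactly minimality.
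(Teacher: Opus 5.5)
Your proposal is correct and follows the paper's own argument: since \(\operatorname{rad}^2\Lambda=0\), minimality forces every entry of each differential to lie in \(\Lambda_1\), so the minimal resolution coincides with its linear part and is exact in negative degrees, giving \(\mathrm{ld}_\Lambda(M)=0\) for every finitely presented \(M\). Your extra details, namely the degree bookkeeping for maps \(P_y\langle n\rangle\to P_x\langle m\rangle\) and the remark about \(H^0\), only make explicit what the paper leaves implicit. The appeal to Theorem~3.3 is not needed, because \(\operatorname{rad}^2\Lambda=0\) already makes \(\Lambda\) finite-dimensional, so minimal resolutions with finitely generated terms exist directly.
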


\begin{proof}
If \(\operatorname{rad}^{2}\Lambda=0\), then every path of length at least two vanishes. Hence, in any minimal graded projective resolution of a finitely presented \(\Lambda\)-module, all differentials are induced by multiplication by arrows, and therefore are homogeneous of degree one. Thus the linear part of the resolution coincides with the resolution itself, so it is exact. It follows that every finitely presented \(\Lambda\)-module is weakly Koszul, and consequently \(\mathrm{gl\,ld}_{\Lambda}=0\).
\end{proof}
Recall that if \(M\) is a finitely presented graded \(\Lambda\)-module, its
Poincar\'e series is defined by
\[
P^{\Lambda}_{M}(t)
=
\sum_{n \ge 0}
\dim_{k}\!\bigl(\operatorname{Ext}^{n}_{\Lambda}(M,\Lambda_{0})\bigr)\, t^{n}.
\]
As a direct consequence of Theorem~3.4, we obtain the following.

\begin{corollary}
Every finitely presented graded $\Lambda$-module has a rational Poincar\'e series.
\end{corollary}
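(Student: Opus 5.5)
We plan to compute the minimal graded projective resolution of \(M\) explicitly enough to read off its Betti numbers, and then show that their generating function is rational. Take a minimal graded projective presentation \(P^{1}\xrightarrow{d}P^{0}\to M\to 0\), with \(P^0,P^1\) finite direct sums of shifts \(P_x\langle n\rangle\). By Lemma~3.2, \(\ker d\) is a finite direct sum of modules \(\Lambda\alpha\langle i\rangle\), indexed by arrows \(\alpha\). By Proposition~3.1, each \(\Lambda\alpha\) has the minimal linear projective resolution \(K(\alpha^{\mathrm{op}}I^{!}_x)\). Splicing these gives a minimal graded projective resolution of \(M\). Minimality holds because the splice at the joint \(P^{1}\) is minimal: \(\ker d\subseteq \operatorname{rad}P^1\) by minimality of the presentation, and the resolution of \(\Lambda\alpha\) starts with \(P_y\langle 1\rangle\to\Lambda\alpha\), which is a projective cover.

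It follows that, for \(n\ge 2\),
\[
\dim_k\operatorname{Ext}^{n}_{\Lambda}(M,\Lambda_0)=\sum_{(\alpha,i)} \dim_k\bigl(\alpha^{\mathrm{op}}I^{!}_{x}\bigr)_{n-1}
\]
(up to a fixed index shift), where the sum runs over the finitely many summands of \(\ker d\). Indeed, \(K(N)^{m}=\bigoplus_u P_u\langle m\rangle\otimes N_m(u)\), so the number of indecomposable projectives in cohomological position \(m\) is \(\dim_k N_m\). Hence
\[
P^{\Lambda}_M(t)=(\text{polynomial of degree}\le 1)+t\sum_{(\alpha,i)} H_{\alpha^{\mathrm{op}}I^{!}_x}(t),
\]
and it remains to prove that the Hilbert series of each module \(\alpha^{\mathrm{op}}I^{!}_x\) is rational.

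For that last step, note that \((\alpha^{\mathrm{op}}I^{!}_x)_m\) has as basis the nonzero paths of length \(m\) in \(\Lambda^{!}=kQ^{\mathrm{op}}/I^{!}\) that end with the prescribed arrow \(\alpha^{\mathrm{op}}\) at \(x\). Because \(I^{!}\) is generated by quadratic monomials, a path is nonzero exactly when each consecutive pair of arrows is an allowed pair. So these paths are walks in a finite digraph \(\Gamma\), the line graph whose vertices are arrows and whose edges are the allowed pairs. Let \(A\) be the adjacency matrix of \(\Gamma\). The count of such walks of length \(m\) (for \(m\ge1\)) is an entry of \(A^{m-1}\), and therefore
\[
H_{\alpha^{\mathrm{op}}I^{!}_x}(t)=t\,\bigl[(I-tA)^{-1}\bigr]_{\cdot,\alpha}\cdot\mathbf 1,
\]
which is rational in \(t\). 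A finite sum of rational functions is rational, which gives the corollary. The same line-graph argument also yields rational Hilbert series for finitely presented modules, as claimed in the introduction.

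The main obstacle is the splicing and minimality step, specifically confirming that Lemma~3.2 gives the kernel as a direct sum on the nose, with finitely many summands. Once that is granted, the Betti numbers are exactly the dimension counts of the coKoszul modules above, and the rationality reduces to the transfer-matrix computation, which is routine.
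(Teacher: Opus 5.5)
Your proof is correct, but it takes a genuinely different route from the paper. The paper's argument is a one-line reduction: Theorem~3.4 gives $\mathrm{gl\,ld}_{\Lambda}\le 1$, and \cite[Theorem~4.7]{24} is then invoked to turn finite linearity defect into rationality of the Poincar\'e series. You never use linearity defect. Instead you identify $\Omega^{2}M\cong\bigoplus_{(\alpha,i)}\Lambda\alpha\langle i\rangle$ via Lemma~3.2, read off the Betti numbers of each $\Lambda\alpha$ from Proposition~3.1 as graded dimensions of $\alpha^{\mathrm{op}}I^{!}_{x}$, and prove that these Hilbert series are rational using the transfer matrix $A$ of the line digraph of $\Lambda^{!}$.

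What your route buys:
\begin{itemize}
\item It is self-contained and more explicit. It gives $P^{\Lambda}_{M}(t)=\beta_{0}+\beta_{1}t+t\sum_{(\alpha,i)}H_{\alpha^{\mathrm{op}}I^{!}_{x}}(t)$, with $\beta_0,\beta_1$ the first two Betti numbers. It also shows that $\det(\mathrm{Id}-tA)\,P^{\Lambda}_{M}(t)$ is a polynomial for every $M$, so the denominator is governed by $\Lambda^{!}$ alone.
\item It needs only the abstract isomorphism type of $\Omega^{2}M$. The bound $\mathrm{ld}_{\Lambda}(M)\le 1$ behind the paper's argument also needs to know how $\Omega^{2}M$ sits inside $P^{1}$. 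Specifically, the map $P^{2}\to P^{1}$ must be linear; otherwise the linear part can acquire cohomology in degree $-2$.
\end{itemize}

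The paper's route, in turn, is shorter. It presents the corollary as a formal consequence of absolute Koszulity, within the general framework of linearity defect.

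The step you flag as the main obstacle is exactly what Lemma~3.2 asserts, so it is available to you. Which end of the $\Lambda^{!}$-path is pinned to $\alpha^{\mathrm{op}}$ affects the individual Betti numbers, but not rationality, since your transfer-matrix count works for either end.
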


\begin{proof}
This follows from Theorem~3.4 and \cite[Theorem~4.7]{24}.
\end{proof}

For the remainder of the paper, we assume that \(\Lambda\) is finite-dimensional. Consequently, its Koszul dual \(\Lambda^{!}\) has finite global dimension.
\subsection{Koszul duality}
We now use the preceding results to refine our Koszul dualities and to
study the categories of perfect and coperfect modules, as well as the
tails category of \(\Lambda^{!}\) and its dual.

We begin with the following proposition, which is a direct consequence of
Theorem~3.3.

\begin{proposition}
Let \(\Lambda\) be a finite-dimensional quadratic monomial algebra. Then
\(\Lambda^{!}\textup{-Pe}^{\mathbb{Z}}\) (resp. \(\Lambda^{!}\textup{-Cop}^{\mathbb{Z}}\)) is an abelian category and coincides with the category of finitely presented (resp. finitely copresented) graded \(\Lambda^{!}\)-modules, that is,
\[
\Lambda^{!}\textup{-Pe}^{\mathbb{Z}} = \Lambda^{!}\textup{-Fp}^{\mathbb{Z}},
\qquad
\Lambda^{!}\textup{-Cop}^{\mathbb{Z}} = \Lambda^{!}\textup{-Fcp}^{\mathbb{Z}}.
\]
In particular, the tails and cotails categories
\[
\Lambda^{!}\textup{-Fp}^{\mathbb{Z}} \big/ \Lambda^{!}\textup{-gmod}
\quad \text{and} \quad
\Lambda^{!}\textup{-Fcp}^{\mathbb{Z}} \big/ \Lambda^{!}\textup{-gmod}
\]
are abelian.
\end{proposition}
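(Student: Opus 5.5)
Apply Theorem~3.3 to the quadratic monomial algebra \(\Lambda^{!}\) itself rather than to \(\Lambda\): \(\Lambda^{!}=kQ^{\mathrm{op}}/I^{!}\) is again quadratic monomial, though only locally finite-dimensional. Lemma~3.2 and Proposition~3.1 were proved using only the path-basis combinatorics of monomial relations, so they hold for \(\Lambda^{!}\). Hence \(\Lambda^{!}\) is graded coherent and co-coherent, and \(\Lambda^{!}\textup{-Fp}^{\mathbb{Z}}\) and \(\Lambda^{!}\textup{-Fcp}^{\mathbb{Z}}\) are abelian. It then remains to identify these categories with the perfect and coperfect modules, respectively.

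\textbf{Inclusion \(\Lambda^{!}\textup{-Pe}^{\mathbb{Z}}\subseteq\Lambda^{!}\textup{-Fp}^{\mathbb{Z}}\).} A finite resolution by finitely generated graded projectives truncates to a finite presentation.

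\textbf{Inclusion \(\Lambda^{!}\textup{-Fp}^{\mathbb{Z}}\subseteq\Lambda^{!}\textup{-Pe}^{\mathbb{Z}}\).} Take a minimal presentation \(P^{1}\xrightarrow{d}P^{0}\to M\to 0\). By Lemma~3.2 (for \(\Lambda^{!}\)), \(\ker d\) is a finite direct sum of shifts of modules \(\Lambda^{!}\alpha^{\mathrm{op}}\). It therefore suffices to show each \(\Lambda^{!}\alpha^{\mathrm{op}}\) has a finite resolution by finitely generated projectives; splicing these with the presentation then gives a finite resolution of \(M\).

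\textbf{Main obstacle: finiteness of the resolution of \(\Lambda^{!}\alpha^{\mathrm{op}}\).} Proposition~3.1 applied to \(\Lambda^{!}\) (whose dual is \(\Lambda\)) says the Koszul functor sends \(\alpha I_x\subseteq I_x\), a submodule of an indecomposable injective \(\Lambda\)-module, to a minimal linear projective resolution of \(\Lambda^{!}\alpha^{\mathrm{op}}\). Its \(n\)-th term is \(\bigoplus_a P^{!}_a\langle n\rangle\otimes(\alpha I_x)_n(a)\). Since \(\Lambda\) is finite-dimensional, \(I_x\) is finite-dimensional, so \((\alpha I_x)_n=0\) for \(n\) large, and each term is finitely generated. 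The resolution is therefore finite and linear, so \(\Lambda^{!}\alpha^{\mathrm{op}}\) is perfect. This is exactly the point where finite-dimensionality of \(\Lambda\) is used, and it reflects the finiteness of \(\operatorname{gl.dim}\Lambda^{!}\) noted before this subsection.

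\textbf{The coperfect case.} The statement \(\Lambda^{!}\textup{-Cop}^{\mathbb{Z}}=\Lambda^{!}\textup{-Fcp}^{\mathbb{Z}}\) follows dually. The dual of Lemma~3.2 shows that the cokernel of a minimal map of finitely cogenerated injectives is a sum of shifts of modules \(\alpha^{\mathrm{op}}I^{!}_x\). The coKoszul functor \(G\) sends \(\Lambda\alpha\) to their minimal injective coresolution, and this is finite because \(\Lambda\alpha\) is finite-dimensional. Alternatively, one can invoke the identification \(\Lambda^{!}\textup{-GMod}^{-,b}=\Lambda^{!}\textup{-Cop}^{\mathbb{Z}}\) recalled in Section~2.

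\textbf{Tails and cotails.} \(\Lambda^{!}\textup{-gmod}\) is a Serre subcategory of these abelian categories: it is closed under subobjects, quotients and extensions, and its objects are perfect and coperfect by finite global dimension. Hence the Serre quotients \(\Lambda^{!}\textup{-Fp}^{\mathbb{Z}}/\Lambda^{!}\textup{-gmod}\) and \(\Lambda^{!}\textup{-Fcp}^{\mathbb{Z}}/\Lambda^{!}\textup{-gmod}\) are abelian.
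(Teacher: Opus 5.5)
Your proposal is correct and follows essentially the paper's route: the paper gives no argument beyond calling the proposition a direct consequence of Theorem~3.3. Implicitly, Theorem~3.3 is applied to the quadratic monomial algebra \(\Lambda^{!}\) to get coherence and co-coherence, and the finite global dimension of \(\Lambda^{!}\) (noted just before the subsection) supplies the finiteness needed for \(\Lambda^{!}\textup{-Fp}^{\mathbb{Z}}=\Lambda^{!}\textup{-Pe}^{\mathbb{Z}}\) and \(\Lambda^{!}\textup{-Fcp}^{\mathbb{Z}}=\Lambda^{!}\textup{-Cop}^{\mathbb{Z}}\). Your use of Proposition~3.1 with the roles of \(\Lambda\) and \(\Lambda^{!}\) swapped, together with the finite-dimensionality of \(I_x\), is simply an explicit unpacking of that finite-global-dimension step.
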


We now state the graded derived Koszul duality and its consequences.

\begin{theorem}
Let \(\Lambda\) be a finite-dimensional quadratic monomial algebra. Then the following statements hold.

\begin{enumerate}
\item \textbf{Graded derived Koszul duality.}
There are triangulated equivalences
\[
\mathsf{D}^{b}\bigl(\Lambda^{!}\textup{-Fcp}^{\mathbb{Z}}\bigr)
\;\xrightarrow{\ \sim\ }\;
\mathsf{D}^{b}\bigl(\Lambda\textup{-gmod}\bigr),
\qquad
\mathsf{D}^{b}\bigl(\Lambda\textup{-gmod}\bigr)
\;\xrightarrow{\ \sim\ }\;
\mathsf{D}^{b}\bigl(\Lambda^{!}\textup{-Fp}^{\mathbb{Z}}\bigr).
\]

\item \textbf{Graded singular Koszul duality.}
There is a triangulated equivalence
\[
\mathsf{D}^{b}\!\Bigl(
\Lambda^{!}\textup{-Fcp}^{\mathbb{Z}} \big/ \Lambda^{!}\textup{-gmod}
\Bigr)
\;\xrightarrow{\ \sim\ }\;
\mathsf{D}_{\mathrm{sg}}\bigl(\Lambda\textup{-gmod}\bigr).
\]

\item \textbf{The graded BGG correspondance.}
If, in addition, \(\Lambda\) is Iwanaga--Gorenstein, then there is a triangulated equivalence
\[
\Lambda\textup{-}\underline{\mathrm{Gproj}}^{\mathbb{Z}}
\;\xrightarrow{\ \sim\ }\;
\mathsf{D}^{b}\!\Bigl(
\Lambda^{!}\textup{-Fp}^{\mathbb{Z}} \big/ \Lambda^{!}\textup{-gmod}
\Bigr)   \;\xrightarrow{\ \sim\ }\;\mathsf{D}^{b}\!\Bigl(
\Lambda^{!}\textup{-Fcp}^{\mathbb{Z}} \big/ \Lambda^{!}\textup{-gmod}
\Bigr)
\]
\end{enumerate}
\end{theorem}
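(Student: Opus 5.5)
The plan is to obtain all three parts by specializing Theorem~2.1 to the present setting, using Proposition~3.7 to replace perfect and coperfect modules by finitely presented and finitely copresented ones. By Theorem~2.3, \(\Lambda\) is Koszul, and it is finite-dimensional by the standing assumption. Hence Theorem~2.1 applies and yields:
\begin{itemize}
\item equivalences \(\mathsf{D}^{b}(\Lambda^{!}\textup{-Cop}^{\mathbb{Z}})\simeq\mathsf{D}^{b}(\Lambda\textup{-gmod})\simeq\mathsf{D}^{b}(\Lambda^{!}\textup{-Pe}^{\mathbb{Z}})\), realized by the functors \(\mathfrak{F}\) and \(\mathfrak{G}\);
\item the singular equivalence \(\mathsf{D}^{b}(\Lambda^{!}\textup{-Cop}^{\mathbb{Z}}/\Lambda^{!}\textup{-gmod})\simeq\mathsf{D}_{\mathrm{sg}}(\Lambda\textup{-gmod})\);
\item in the Iwanaga--Gorenstein case, the chain of equivalences involving \(\Lambda^{!}\textup{-Pe}^{\mathbb{Z}}/\Lambda^{!}\textup{-gmod}\), \(\Lambda^{!}\textup{-Cop}^{\mathbb{Z}}/\Lambda^{!}\textup{-gmod}\) and \(\Lambda\textup{-}\underline{\mathrm{Gproj}}^{\mathbb{Z}}\).
\end{itemize}

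Proposition~3.7 gives the equalities of full subcategories \(\Lambda^{!}\textup{-Pe}^{\mathbb{Z}}=\Lambda^{!}\textup{-Fp}^{\mathbb{Z}}\) and \(\Lambda^{!}\textup{-Cop}^{\mathbb{Z}}=\Lambda^{!}\textup{-Fcp}^{\mathbb{Z}}\) inside \(\Lambda^{!}\textup{-GMod}\). It also shows these categories are abelian, so the bounded derived categories in the statement are derived categories of abelian categories.

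The one point to check is that the exact structure used in \cite{5} on \(\Lambda^{!}\textup{-Cop}^{\mathbb{Z}}\) agrees with the abelian structure on \(\Lambda^{!}\textup{-Fcp}^{\mathbb{Z}}\), so that the derived categories really coincide. Both are the structures inherited from \(\Lambda^{!}\textup{-GMod}\). Since \(\Lambda^{!}\textup{-Fcp}^{\mathbb{Z}}\) is an abelian subcategory closed under kernels and cokernels (by co-coherence, Theorem~3.3), its short exact sequences are exactly the short exact sequences of \(\Lambda^{!}\textup{-GMod}\) with all terms in the subcategory. The same argument applies to the perfect side. With this, part~(1) is a direct rewriting of Theorem~2.1(i).

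For parts~(2) and~(3) I must also identify the quotients. \(\Lambda^{!}\textup{-gmod}\) is a Serre subcategory of the abelian category \(\Lambda^{!}\textup{-Fcp}^{\mathbb{Z}}\), and likewise of \(\Lambda^{!}\textup{-Fp}^{\mathbb{Z}}\). To see this, note that finite-dimensional graded \(\Lambda^{!}\)-modules are coperfect, because \(\Lambda^{!}\) has finite global dimension (stated just before this subsection). The subcategory is also closed under subobjects, quotients and extensions. So the exact quotient of \cite{5} is the Serre (Gabriel) quotient, the tails or cotails category, which is abelian by Proposition~3.7. Part~(2) then follows from Theorem~2.1(ii), and part~(3) from Theorem~2.1(iv) after inverting the arrows in the chain.

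The main obstacle is this compatibility between exact structures, particularly at the level of the quotient. I expect to handle it by noting that in \cite{5} the quotient is formed by localizing at morphisms whose kernel and cokernel lie in \(\Lambda^{!}\textup{-gmod}\), and that this localization is exactly Gabriel's construction once the ambient category is abelian.
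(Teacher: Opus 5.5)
Your proposal is correct and is the argument the paper intends; Theorem~3.8 is stated without a separate proof. One specializes the Koszul duality theorem of [5] recalled in Section~2 to the Koszul algebra \(\Lambda\), and then replaces \(\Lambda^{!}\textup{-Pe}^{\mathbb{Z}}\) and \(\Lambda^{!}\textup{-Cop}^{\mathbb{Z}}\) by \(\Lambda^{!}\textup{-Fp}^{\mathbb{Z}}\) and \(\Lambda^{!}\textup{-Fcp}^{\mathbb{Z}}\) via Proposition~3.7.

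Your extra check that the inherited exact structures, and the quotient by the Serre subcategory \(\Lambda^{!}\textup{-gmod}\), agree with the abelian (Gabriel) ones is something the paper leaves implicit. One step needs tightening: finite global dimension of \(\Lambda^{!}\) alone does not show that the injective terms are finite sums. The inclusion \(\Lambda^{!}\textup{-gmod}\subseteq\Lambda^{!}\textup{-Cop}^{\mathbb{Z}}\) follows more directly because \(K(M)\) is a bounded complex for finite-dimensional \(M\), so \(M\in\Lambda^{!}\textup{-GMod}^{-,b}=\Lambda^{!}\textup{-Cop}^{\mathbb{Z}}\).
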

As a direct consequence of Theorem~3.8(2), we obtain the following.

\begin{corollary}
The categories
\[
\Lambda^{!}\textup{-Fcp}^{\mathbb{Z}} \big/ \Lambda^{!}\textup{-gmod}
\quad \textup{and} \quad
\Lambda^{!}\textup{-Fp}^{\mathbb{Z}} \big/ \Lambda^{!}\textup{-gmod}
\]
are hereditary.
\end{corollary}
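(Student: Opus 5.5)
The plan is to show that every Ext\(^2\) between objects of the quotient categories vanishes. We work inside the bounded derived categories supplied by Theorem~3.8(2). By Proposition~3.7, \(\mathcal{T}:=\Lambda^{!}\textup{-Fcp}^{\mathbb{Z}}/\Lambda^{!}\textup{-gmod}\) is abelian. Hence \(\mathcal{T}\) is the heart of the standard \(t\)-structure on \(\mathsf{D}^{b}(\mathcal{T})\), and for \(A,B\in\mathcal{T}\) we have \(\operatorname{Ext}^n_{\mathcal{T}}(A,B)=\operatorname{Hom}_{\mathsf{D}^b(\mathcal{T})}(A,B[n])\). It therefore suffices to show that these Hom-spaces vanish for \(n\ge 2\).

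To compute them, transport \(A\) and \(B\) through the equivalence of Theorem~3.8(2) to \(\mathsf{D}_{\mathrm{sg}}(\Lambda\textup{-gmod})\). The first step is to identify the images. Represent an object of \(\mathcal{T}\) by a finitely copresented module \(M\). By Proposition~3.7 and the colinear analogue of the truncation results, we may replace \(M\) by a colinear truncation \(\tau_{\le r}M\langle -r\rangle\); this is legitimate because \(\tau_{>r}M\) is finite-dimensional, so \(M\) and its truncation agree in \(\mathcal{T}\). Concretely, the kernel and cokernel building blocks are the modules \(\alpha^{\mathrm{op}}I^!_x\), and Proposition~3.1 and Lemma~3.2 give the dual description in terms of the modules \(\alpha^{\mathrm{op}}I^!_x\). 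Under \(\mathfrak{F}\), such a colinear module goes to a shift of a module, up to the grading twist \(\langle 1\rangle[1]\). For instance, \(\alpha^{\mathrm{op}}I^!_x\) goes to \(\Lambda\alpha\), placed in a single cohomological degree. Thus each object of \(\mathcal{T}\) corresponds in \(\mathsf{D}_{\mathrm{sg}}\) to a graded \(\Lambda\)-module \(N\) that is a finite direct sum of shifts of modules \(\Lambda\alpha\langle i\rangle\), suitably shifted.

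The second step is to compute \(\operatorname{Hom}_{\mathsf{D}_{\mathrm{sg}}}(N,N'[n])\) for such \(N\) and \(N'\). By Lemma~3.2, the syzygy of \(\Lambda\alpha\) is \(\bigoplus_{\beta\alpha=0}\Lambda\beta\langle 1\rangle\). So in \(\mathsf{D}_{\mathrm{sg}}\) we have \(\Lambda\alpha[-1]\cong\bigoplus\Lambda\beta\langle1\rangle\), and the class of objects \(\{\Lambda\alpha\langle i\rangle\}\) is stable under \(\Omega\) combined with grading twist. Using this, a morphism \(N\to N'[n]\) with \(n\ge2\) can be rewritten as a morphism \(\Omega^{n}N\to N'\) in the singularity category. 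After matching the grading shifts \(\langle n\rangle\) against the internal degree constraints imposed by linearity, this becomes a degree-\(0\) map between modules generated in internal degrees that differ by \(n\). Since these modules are generated in a single internal degree and the colinear and linear structure forces the gap to be at most one, the Hom-space vanishes once \(n\ge2\). This is the main obstacle: one has to check that the grading bookkeeping of \(\mathfrak{F}\) (internal shift \(\langle q\rangle\) paired with cohomological degree \(q\)) really converts \([n]\) into an internal degree gap of \(n\) that kills all maps. I would try to do this by computing directly with the linear complexes \(K(\alpha^{\mathrm{op}}I^!_x)\) and their totalizations, invoking stable Hom vanishing for linear modules whose generators differ in degree by at least two.

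Finally, the statement for \(\Lambda^{!}\textup{-Fp}^{\mathbb{Z}}/\Lambda^{!}\textup{-gmod}\) follows by the dual argument: use the second equivalence of Theorem~3.8(1), together with \(\Lambda^{!}\textup{-Pe}^{\mathbb{Z}}=\Lambda^{!}\textup{-Fp}^{\mathbb{Z}}\), and replace injective copresentations by projective presentations and colinear truncations by linear ones. Alternatively, when \(\Lambda\) is Iwanaga--Gorenstein, it follows immediately from the equivalence of Theorem~3.8(3) between the two derived categories, since that equivalence can be checked to preserve the standard hearts. A cleaner fallback is a homological-dimension argument: show that every object of \(\mathcal{T}\) has a length-one resolution by images of the modules \(\alpha^{\mathrm{op}}I^!_x\). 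This uses Lemma~3.2 (kernels are sums of the modules \(\Lambda\alpha\)) transported via Proposition~3.1, and gives \(\operatorname{Ext}^2=0\) directly.
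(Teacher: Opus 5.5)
Reducing heredity to showing $\operatorname{Hom}_{\mathsf{D}_{\mathrm{sg}}(\Lambda\textup{-gmod})}(\Phi A,\Phi B[n])=0$ for $n\ge 2$ is a legitimate route. Here $\Phi$ is the equivalence $\mathsf{D}^b(\mathcal{T})\to\mathsf{D}_{\mathrm{sg}}(\Lambda\textup{-gmod})$ of Theorem~3.8(2). This differs from the paper, which uses $\mathfrak{G}$ and Proposition~3.1 to show that every object of $\mathsf{D}^b(\mathcal{T})$ is a direct sum of shifted stalk complexes. However, the step carrying all the content is exactly the one you defer, and the mechanism you suggest for it is wrong.

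A degree-zero homomorphism $\Lambda\delta\langle i\rangle\to\Lambda\varepsilon\langle j\rangle$ whose source is generated $n$ internal degrees above its target need not vanish for $n\ge 2$. Sending $\delta$ to $\sum_p c_p\,p\varepsilon$, with $\ell(p)=n$ and $p\varepsilon\neq 0$, is well defined and nonzero whenever $\gamma p=0$ for every arrow $\gamma$ with $\gamma\delta=0$.

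The missing lemma is that every such map with $n\ge 1$ is \emph{stably} zero. The same condition makes $\delta\mapsto\sum_p c_p\,p$ a well-defined map from $\Lambda\delta$ to the projective cover $P$ of $\Lambda\varepsilon$. Composing with $P\to\Lambda\varepsilon$, $\lambda\mapsto\lambda\varepsilon$, recovers the original map. For $n<0$ there are no nonzero maps at all.

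Two further pieces of bookkeeping are also missing:
\begin{itemize}
\item Since $\Lambda$ is not assumed Gorenstein, a morphism space in $\mathsf{D}_{\mathrm{sg}}$ between modules is $\varinjlim_k\underline{\operatorname{Hom}}(\Omega^{k+n}N,\Omega^kN')$, not a single stable Hom out of $\Omega^nN$. So the lemma must be applied at every stage, using that $\Omega$ preserves sums of modules $\Lambda\delta\langle i\rangle$ and raises the generating degree by one.
\item You must identify $\Phi A$ in $\mathsf{D}_{\mathrm{sg}}$ as a linear module whose cohomological shift is tied to its generating degree, for instance via a brutal truncation of the linear complex $K(M)$. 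It is not enough to say it is some sum of modules $\Lambda\alpha\langle i\rangle$ ``suitably shifted''.
\end{itemize}
With these in place, the generating-degree gap equals $n$ and the relevant stable Homs vanish for every $n\neq 0$. So there is no ``gap at most one'' phenomenon. As written, the vanishing for $n\ge 2$ is asserted rather than proved.

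For $\Lambda^{!}\textup{-Fp}^{\mathbb Z}/\Lambda^{!}\textup{-gmod}$, Theorem~3.8(2) gives no equivalence with $\mathsf{D}_{\mathrm{sg}}(\Lambda\textup{-gmod})$. So the ``dual argument'' must be carried out in $\mathsf{D}^b(\Lambda\textup{-gmod})/\mathsf{K}^b(\Lambda\textup{-inj}^{\mathbb Z})$ with cosyzygies and the modules $\alpha I_x$, as the paper indicates. Alternatively, deduce it from the $\mathrm{Fcp}$ case for $\Lambda^{\mathrm{op}}$ via $D=\operatorname{Hom}_k(-,k)$. Your Gorenstein alternative covers only a special case.

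Finally, your fallback does not work as stated. A length-one resolution by the images of the modules $\alpha^{\mathrm{op}}I^!_x$ bounds $\operatorname{Ext}$ only if those images are projective in $\mathcal{T}$, which you have not established.
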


\begin{proof}
It is well known that any object of the singularity category
\(\mathsf{D}_{\mathrm{sg}}\bigl(\Lambda\textup{-gmod}\bigr)\)
is isomorphic to a shift of a finite-dimensional graded \(\Lambda\)-module.
Moreover, since \(\Lambda\) is quadratic monomial, every such module can be written as a finite direct sum of modules
of the form \(\Lambda \beta\langle i\rangle\).

On the other hand, the equivalence \(\mathfrak{G}\) sends
\(\bigoplus \Lambda \beta\langle i\rangle\) to a bounded complex of injective
\(\Lambda^{!}\)-modules. In particular, by Proposition~3.1, in the category
\[
\mathsf{D}^{b}\!\Bigl(
\Lambda^{!}\textup{-Fcp}^{\mathbb{Z}} \big/ \Lambda^{!}\textup{-gmod}
\Bigr),
\]
this complex is isomorphic to its cohomology.

It follows that every object is isomorphic to a direct sum of shifts of
objects concentrated in a single degree. Consequently, the category
\[
\Lambda^{!}\textup{-Fcp}^{\mathbb{Z}} \big/ \Lambda^{!}\textup{-gmod}
\]
is hereditary.

For the category
\[
\Lambda^{!}\textup{-Fp}^{\mathbb{Z}} \big/ \Lambda^{!}\textup{-gmod},
\]
one proceeds similarly by working with the \(\Lambda\)-modules of the form
\(\alpha I_{x}\) and the Verdier quotient
\[
\mathsf{D}^{b}\bigl(\Lambda\textup{-gmod}\bigr)
\big/ \mathsf{K}^{b}\bigl(\mathrm{inj}^{\mathbb{Z}}\bigr),
\]
together with the corresponding bounded derived category
\[
\mathsf{D}^{b}\!\Bigl(
\Lambda^{!}\textup{-Fp}^{\mathbb{Z}} \big/ \Lambda^{!}\textup{-gmod}
\Bigr).
\]
The same argument shows that this category is hereditary.
\end{proof}

In what follows, we establish new characterizations of finitely presented and finitely copresented $\Lambda^{!}$-modules. As an application, we obtain explicit descriptions of the objects in the corresponding tails and cotails categories. We begin with the following well-known result describing the cohomology of objects in the bounded derived category.

\begin{lemma}
Let $X^\bullet$ be an object of $\mathsf{D}^b(\Lambda\textup{-gmod})$. Then, for each $n \in \mathbb{Z}$, there is a natural isomorphism
\[
H^n(X^\bullet) \cong \bigoplus_{i \in \mathbb{Z}} \operatorname{Hom}_{\mathsf{D}^b(\Lambda\textup{-gmod})}\bigl(\Lambda, X^\bullet[n]\langle i \rangle\bigr).
\]
\end{lemma}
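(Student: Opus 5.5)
The plan is to reduce the statement to the fact that $\Lambda$ is projective in $\Lambda\textup{-gmod}$ and that the graded pieces of $\Lambda$ detect the graded pieces of a module. Here $\Lambda = \bigoplus_{x\in Q_0} P_x$, and we read $H^n(X^\bullet)$ as a graded vector space (more precisely, as a graded $\Lambda$-module viewed through its underlying graded space $\bigoplus_x \bigoplus_i H^n(X^\bullet)_i(x)$).

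First, we compute the morphisms in the homotopy category. Since $\Lambda$ is a finite direct sum of graded projectives $P_x$, Hom out of a projective concentrated in degree $0$ into the derived category can be computed in the homotopy category $\mathsf{K}(\Lambda\textup{-gmod})$. Indeed, $\mathsf{D}^b(\Lambda\textup{-gmod})\simeq \mathsf{K}^{-,b}(\Lambda\textup{-proj}^{\mathbb Z})$, and $\Lambda$ is K-projective. Hence
\[
\operatorname{Hom}_{\mathsf{D}^b(\Lambda\textup{-gmod})}\bigl(\Lambda, X^\bullet[n]\langle i\rangle\bigr)\cong H^0\bigl(\operatorname{Hom}^\bullet_{\Lambda\textup{-gmod}}(\Lambda, X^\bullet[n]\langle i\rangle)\bigr).
\]
Because $\operatorname{Hom}_{\Lambda\textup{-gmod}}(\Lambda,-)$ is exact, it commutes with cohomology, so the right-hand side equals $\operatorname{Hom}_{\Lambda\textup{-gmod}}\bigl(\Lambda, H^n(X^\bullet)\langle i\rangle\bigr)$.

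Second, we use graded Yoneda. For each $x$, $\operatorname{Hom}_{\Lambda\textup{-gmod}}(P_x, M\langle i\rangle)\cong M\langle i\rangle_0(x) = M_{-i}(x)$, since degree-preserving maps out of the representable functor $P_x$ correspond to degree-$0$ elements at $x$. Summing over $x\in Q_0$ gives $\operatorname{Hom}(\Lambda, M\langle i\rangle)\cong \bigoplus_x M_{-i}(x)$. Summing over all $i\in\mathbb Z$ then gives $\bigoplus_i\bigoplus_x M_{-i}(x)=M$ as a graded space, with the $i$-th summand sitting in degree $-i$. Taking $M=H^n(X^\bullet)$ yields the claimed isomorphism. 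Only finitely many $i$ contribute, since $X^\bullet$ has finite-dimensional cohomology.

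Third, we check naturality and the module structure. Naturality in $X^\bullet$ is clear, because each identification (K-projectivity, exactness, Yoneda) is natural. If the $\Lambda$-action on the right-hand side is wanted, it comes from precomposition with right multiplication $\Lambda\to\Lambda\langle j\rangle$ by homogeneous elements of $\Lambda_j$, which matches the action on $H^n$ under Yoneda.

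The only delicate point is the first step: justifying that Hom from $\Lambda$ in the derived category equals homotopy classes, that is, that no resolution of the source is needed. This is standard for bounded-above complexes of projectives, and $\Lambda$ is a stalk complex of projectives. Beyond that, the only care needed is in bookkeeping the shift sign convention $M\langle i\rangle_n=M_{n-i}$.
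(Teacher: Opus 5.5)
Your argument is correct and is the standard justification of this lemma, which the paper states as well known and does not prove: since $\Lambda$ is finite-dimensional from Subsection~3.2 on, it is a projective object of $\Lambda\textup{-gmod}$, so $\operatorname{Hom}_{\mathsf{D}^b}(\Lambda, X^\bullet[n]\langle i\rangle)\cong\operatorname{Hom}_{\mathsf{K}}(\Lambda, X^\bullet[n]\langle i\rangle)\cong\operatorname{Hom}(\Lambda, H^n(X^\bullet)\langle i\rangle)\cong\bigoplus_{x} H^n(X^\bullet)_{-i}(x)$, and your chain of identifications yields exactly the claimed isomorphism. One small slip occurs in your optional remark on the module structure: with the convention $M\langle i\rangle_n=M_{n-i}$, right multiplication by $a\in\Lambda_j$ is a degree-preserving map $\Lambda\to\Lambda\langle -j\rangle$, not $\Lambda\to\Lambda\langle j\rangle$.
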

We shall also need the following lemma; compare with~\cite[Lemma~4.2]{5}. Note that the grading shift used in~\cite{5} is opposite to the convention adopted in the present paper.

\begin{lemma}
Let \( X^\bullet \in \mathsf{D}^{b}(\Lambda^{!}\textup{-Fcp}^{\mathbb{Z}}) \). Then there is a natural isomorphism
\[
\mathfrak{F}\bigl(X^\bullet\langle i \rangle\bigr)
\;\cong\;
\mathfrak{F}(X^\bullet)\langle i \rangle[-i].
\]
\end{lemma}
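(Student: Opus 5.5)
We prove the isomorphism at the level of the chain-level functor \(\mathcal{F}\), by comparing the double complexes attached to \(X^\bullet\) and to \(X^\bullet\langle i\rangle\). Since all constructions are natural and \(\mathfrak{F}\) is induced from \(\mathcal{F}\), it suffices to exhibit, for a bounded complex \(X^\bullet\) of coperfect \(\Lambda^{!}\)-modules, an isomorphism of complexes
\[
\mathcal{F}\bigl(X^\bullet\langle i\rangle\bigr)\;\cong\;\mathcal{F}(X^\bullet)\langle i\rangle[-i]
\]
that is natural in \(X^\bullet\).

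\textbf{Step 1: reindex the Koszul functor.} Fix a single module \(M\). Since \((M\langle i\rangle)_q=M_{q-i}\), the definition of \(K\) gives
\[
K(M\langle i\rangle)^{q}=\bigoplus_{u}P_u\langle q\rangle\otimes M_{q-i}(u).
\]
Setting \(q=q'+i\), this becomes
\[
\bigoplus_u P_u\langle q'\rangle\langle i\rangle\otimes M_{q'}(u)=\bigl(K(M)^{q'}\bigr)\langle i\rangle .
\]
So termwise \(K(M\langle i\rangle)^{q}=K(M)^{q-i}\langle i\rangle\), which is precisely the degree-\(q\) term of \(K(M)\langle i\rangle[-i]\). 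The differentials are \(\sum_\alpha P_\alpha\otimes M(\alpha^{\mathrm{op}})\), and the internal shift does not alter the structure maps \(M(\alpha^{\mathrm{op}})\). Hence the differentials agree up to the sign convention for \([-i]\), namely \((-1)^{i}\). To absorb this sign we use the automorphism of \(K(M)\langle i\rangle[-i]\) given by multiplication by \((-1)^{iq}\) in degree \(q\), or equivalently we note that a complex with differential \(d\) is isomorphic to the one with differential \(-d\).

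\textbf{Step 2: pass to the total complex.} Apply Step 1 columnwise to \(B^{q,p}=K(X^p)^q\). The double complex for \(X^\bullet\langle i\rangle\) is the double complex for \(X^\bullet\), shifted internally by \(\langle i\rangle\) and reindexed by \(q\mapsto q-i\) in the vertical direction. The horizontal differentials \(\mathrm{id}\otimes(d^p)\) are unchanged, so
\[
\mathrm{Tot}^n=\bigoplus_{p+q=n}B^{q-i,p}\langle i\rangle=\mathrm{Tot}(B)^{n-i}\langle i\rangle .
\]
The main point requiring care is the sign bookkeeping. The total differential is \(d_1+(-1)^p d_2\), and the vertical differentials may carry an extra \((-1)^i\) coming from the shift \([-i]\). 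I would define the isomorphism on the summand \(B^{q,p}\) as multiplication by a sign \(\varepsilon(p,q)\), for example \((-1)^{iq}\) or \((-1)^{ip}\) according to the convention for \([-i]\). I would then check that it intertwines the two total differentials; this is a routine parity check on squares of the double complex.

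\textbf{Step 3: naturality and descent.} The isomorphisms are identities on the underlying tensor factors up to these signs, so they commute with morphisms of complexes and with homotopies. They therefore descend through \(\mathscr{F}\) to \(\mathfrak{F}\) on \(\mathsf{D}^{b}(\Lambda^{!}\textup{-Fcp}^{\mathbb{Z}})\), using that \(\Lambda^{!}\textup{-Fcp}^{\mathbb{Z}}=\Lambda^{!}\textup{-Cop}^{\mathbb{Z}}\) by Proposition~3.7. The only genuine obstacle is the choice of signs in Step 2, which I expect to resolve by direct inspection.
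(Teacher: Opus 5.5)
The paper gives no argument of its own here; it points to Lemma~4.2 of~[5] and warns that [5] uses the opposite grading shift. Your plan of reindexing $K$ columnwise and then passing to $\mathrm{Tot}$ is the natural one. The gap lies exactly where that warning applies.

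In Step~1 you take $K(M)^n=\bigoplus_x P_x\langle n\rangle\otimes M_n(x)$ verbatim and assert that the differentials agree. You never check that this formula defines a complex of degree-preserving maps, and in the paper's convention it does not. With $M\langle i\rangle_n=M_{n-i}$ and $P_x$ graded by path length, $P_x\langle n\rangle$ is generated in degree $n$. A degree-zero map $P_x\langle n\rangle\to P_y\langle n+1\rangle$ is therefore given by an element of $\bigl(P_y\langle n+1\rangle\bigr)_n(x)=(P_y)_{-1}(x)=0$; right multiplication by an arrow has internal degree $2$ between these shifts. The printed formula is coherent only in the convention of [5]. In the present convention it has to be $K(M)^n=\bigoplus_x P_x\langle -n\rangle\otimes M_n(x)$. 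This is what the paper actually uses in the proof of Theorem~2.3, where $P_z\langle 1\rangle$ sits in cohomological degree $-1$, and when it describes $d_2$ on $P_u\langle -q\rangle\otimes M^p_q(u)$. Redoing your Step~1 with the corrected formula gives $K(M\langle i\rangle)^q=\bigoplus_u P_u\langle -q\rangle\otimes M_{q-i}(u)=K(M)^{q-i}\langle -i\rangle$, that is, $K(M\langle i\rangle)\cong K(M)\langle -i\rangle[-i]$.

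This sign is forced; it is not a matter of bookkeeping. Take $\Lambda=k[x]/(x^2)$, so $\Lambda^!=k[y]$. The module $k[y]/(y^2)$ gives $\operatorname{Ext}^1(S^!,S^!\langle 1\rangle)\neq 0$ in $\Lambda^!\textup{-Fcp}^{\mathbb Z}$, and $\mathfrak F(S^!)\simeq\Lambda$.
\begin{itemize}
\item The printed formula would turn this Ext into $\operatorname{Hom}(\Lambda,\Lambda\langle 1\rangle)=\Lambda_{-1}=0$, contradicting full faithfulness.
\item The corrected formula, $\mathfrak F(S^!\langle 1\rangle)\simeq\Lambda\langle -1\rangle[-1]$, gives $\Lambda_1\neq 0$, as it should.
\end{itemize}
So, with $M\langle i\rangle_n=M_{n-i}$ used on both sides, what your method proves is $\mathfrak F(X^\bullet\langle i\rangle)\cong\mathfrak F(X^\bullet)\langle -i\rangle[-i]$. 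The printed $\langle i\rangle$ on the right is correct only if it is read in the opposite convention to the one on the left. Your argument derives the printed sign from an inconsistent formula, so you must fix one coherent convention before Step~1 proves anything. After that, the structure of your argument goes through unchanged.

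A smaller correction to Step~2: a twist depending only on $q$ fails on $d_1$, and one depending only on $p$ fails on $d_2$, when $i$ is odd. After reindexing, the double complex of $X^\bullet\langle i\rangle$ has literally the same $d_1$ and $d_2$, while $[-i]$ multiplies the total differential by $(-1)^i$. The intertwiner is therefore $(-1)^{in}$ in total degree $n=p+q$, and no separate columnwise sign from Step~1 is needed.
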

The following proposition shows that the categories of finitely presented and finitely copresented modules can be characterized by the existence of linear and colinear truncations, respectively. We shall later show that the global dimension of \(\Lambda^{!}\) is irrelevant in this context, and that over a quadratic monomial algebra every finitely presented (resp.\ finitely copresented) module admits a linear (resp.\ colinear) truncation.

\begin{proposition}
Let \(M\) be a finitely presented graded \(\Lambda\)-module. Then there exists a short exact sequence
\[
0 \longrightarrow N \longrightarrow M \longrightarrow L \longrightarrow 0,
\]
where \(N\) is linear and \(L\) is finite-dimensional.

Dually, if \(M\) is a finitely copresented graded \(\Lambda^{!}\)-module, then there exists a short exact sequence
\[
0 \longrightarrow N' \longrightarrow M \longrightarrow L' \longrightarrow 0,
\]
where \(N'\) is finite-dimensional and \(L'\) is colinear.
\end{proposition}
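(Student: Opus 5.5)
Here $M$ is a finitely presented graded module over $\Lambda$ itself, not over $\Lambda^{!}$, since the sequence has the form $N\to M\to L$. The plan is to take $N=\tau_{\ge r}M$ and $L=\tau_{<r}M$ for a suitable $r$; this truncation sequence has the required shape because $M$ is left bounded. The task is therefore to find $r$ such that $\tau_{\ge r}M\langle r\rangle$ is Koszul, i.e.\ $M$ admits a linear truncation. The dual statement for finitely copresented $\Lambda^{!}$-modules will follow by the dual argument: use $\tau_{>r}M$ and $\tau_{\le r}M$, the coKoszul functor $G$, and the colinear modules $\alpha^{\mathrm{op}}I^{!}_x$ of Proposition~3.1 in place of $K$ and $\Lambda\alpha$.

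\emph{Step 1: reduction to the first syzygy.} Take a minimal presentation $P^{1}\xrightarrow{d}P^{0}\to M\to 0$ with $P^{0},P^{1}$ finite sums of shifted indecomposable projectives. By Lemma~3.2, $\Omega^{2}M=\ker d$ is a finite direct sum of modules $\Lambda\alpha\langle i\rangle$. By Proposition~3.1 each such summand has a linear resolution $K(\alpha^{\mathrm{op}}I^{!}_x)$, suitably shifted. Hence the minimal resolution of $M$ is, from homological degree $2$ on, a finite direct sum of shifted linear resolutions. Since only finitely many shifts $i$ occur, choose $r$ larger than all generator degrees of $P^{0}$, $P^{1}$ and all shifts $i$, allowing a margin of $2$ or $3$.

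\emph{Step 2: truncate the resolution.} Apply $\tau_{\ge r}$ to the exact complex $\cdots\to P^{2}\to P^{1}\to P^{0}\to M\to0$. Truncation in internal degree is exact, so the result is still exact. The task is then to show that $\tau_{\ge r}M\langle r\rangle$ has a linear resolution. The natural first attempt is to use that for $r$ beyond all generator degrees, $\tau_{\ge r}P_x\langle n\rangle$ is generated in degree $r$. For a monomial algebra, $\tau_{\ge r}P_x\langle n\rangle$ is a direct sum of modules $\Lambda p$ over the nonzero paths $p$ of length $r-n$ starting at $x$. Each $\Lambda p$ with $p=p'\beta$, $\beta$ an arrow, is isomorphic to $\Lambda\beta$ up to shift when $p'\beta\neq0$, because the relations are quadratic monomials: a path $q$ satisfies $qp=0$ iff $q\beta=0$ in the relevant sense. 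Hence each truncated projective is a sum of shifted $\Lambda\beta$, which is Koszul after the shift by Proposition~3.1. The same applies to the truncated kernels.

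\emph{Step 3: assemble.} One then has an exact sequence $\cdots\to\tau_{\ge r}P^{1}\to\tau_{\ge r}P^{0}\to\tau_{\ge r}M\to0$ whose terms, after $\langle r\rangle$, are sums of modules $\Lambda\beta$ generated in degree $0$ with linear resolutions. The monomial structure suggests that for $r$ large the truncated map $\tau_{\ge r}P^{0}\to\tau_{\ge r}M$ is itself a direct sum decomposition. Concretely, $\tau_{\ge r}M$ should be isomorphic to a direct sum of the $\Lambda p$ for paths $p$ in $P^{0}$ not killed modulo the image of $d$, because a monomial image kills basis paths independently once degrees exceed the length of $d$'s components. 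I would argue that $\tau_{\ge r}M\langle r\rangle$ is a direct sum of modules $\Lambda\beta\langle\cdot\rangle$ generated in a single degree, hence Koszul by Proposition~3.1.

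\textbf{Main obstacle.} The main difficulty is Step 3: showing that in high degrees the image of $d$ is spanned by basis paths, so the quotient is again monomial-cyclic. Because the components of $d$ are sums of paths, the image need not be monomial. The first attempt would be to invoke Lemma~3.2 in the form that kernels of maps between projectives are sums of $\Lambda\alpha$, applied to $P^{0}\to$ (injective envelope) or to $\tau_{\ge r}$ of $\operatorname{im}d$. Failing that, the fallback is a Castelnuovo–Mumford-type argument: since $\mathrm{gl\,ld}_\Lambda\le1$ (Theorem~3.4) and the resolution is eventually a shifted sum of linear complexes with finitely many shifts, the Ext-criterion $\operatorname{Ext}^{n}(\tau_{\ge r}M,S_x\langle j\rangle)=0$ for $j\ne n+r$ should hold once $r$ exceeds all shifts. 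This can be checked via the long exact sequence of the truncation sequence, using that $\tau_{<r}M$ is finite-dimensional.
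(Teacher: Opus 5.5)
\paragraph{Step 3 does not work.}
Your Step~3, where the proof actually has to happen, does not go through.

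\emph{The concrete claim is false.} Take the free algebra $k\langle x,y\rangle$. It is quadratic monomial with $I=0$, and it is exactly $\Lambda^{!}$ for the radical-square-zero algebra on two loops. For $M=\Lambda/\Lambda(x-y)$ no path becomes zero in $M$. Yet $\tau_{\ge r}M$ is isomorphic, up to shift, to $\tau_{\ge r-1}\Lambda$, which is free of rank $2^{r-1}$. It is not $\bigoplus_{|p|=r}\Lambda p$, which has rank $2^{r}$.

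\emph{The assembly step also fails.} Even granting the decomposition of the truncated projectives, an exact sequence $\cdots\to\tau_{\ge r}P^{1}\to\tau_{\ge r}P^{0}\to\tau_{\ge r}M\to 0$ whose terms are Koszul and all generated in degree $r$ is not a linear projective resolution. Splicing their resolutions puts generators in the wrong internal degrees.

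\emph{The fallback is the right argument, but you cite the wrong inputs.} The bound $\mathrm{gl\,ld}_\Lambda\le 1$ plays no role, and finite-dimensionality of $\tau_{<r}M$ is not enough. What you need is:
(a) the finite regularity from Step~1: by Lemma~3.2 and Proposition~3.1 there is a constant $c$ with $P^{n}$ generated in degrees $\le n+c$ for all $n$;
(b) Koszulity of $\Lambda$ (Theorem~2.3): $\tau_{<r}M$ is filtered by simples $S_y\langle d\rangle$ with $d<r$, each having a linear resolution, so $\operatorname{Ext}^{n}(\tau_{<r}M,S_x\langle j\rangle)=0$ for $j>n+r-1$.
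Then, for $r\ge c$, the long exact sequence of $0\to\tau_{\ge r}M\to M\to\tau_{<r}M\to 0$ gives $\operatorname{Ext}^{n}(\tau_{\ge r}M,S_x\langle j\rangle)=0$ for $j>n+r$. The case $j<n+r$ is automatic by minimality, since $\tau_{\ge r}M$ lives in degrees $\ge r$.

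\paragraph{The ``dual'' half is the substantive one.}
In the paper's standing setting $\Lambda$ is finite-dimensional. So a finitely presented graded $\Lambda$-module is finite-dimensional, and the first sequence is trivial: take $N=0$. The real content is over $\Lambda^{!}$, which is typically infinite-dimensional.

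Swapping in $G$ and $\alpha^{\mathrm{op}}I^{!}_x$ does not supply what your argument needs there. Proposition~3.1 says these modules are colinear. Nothing says that the cosyzygies of a finitely copresented $\Lambda^{!}$-module are built from them, and $G$ plays no role in your argument.

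The clean way is as follows. Apply the repaired argument to the finitely presented right module $DM$ over the quadratic monomial algebra $(\Lambda^{!})^{\mathrm{op}}$; Lemma~3.2, Proposition~3.1 and Theorem~2.3 do not use finite-dimensionality. This gives $0\to N\to DM\to L\to 0$. Dualizing yields $0\to DL\to M\to DN\to 0$, with $DL$ finite-dimensional and $DN$ colinear.

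\paragraph{Comparison with the paper.}
Done that way, your proof is genuinely different from the paper's. The paper uses that finitely copresented $\Lambda^{!}$-modules are coperfect, so $K(M)$ has bounded cohomology. Since $K(\tau_{\le n}M)$ is the brutal truncation of $K(M)$ in cohomological degrees $\le n$, it is a linear resolution once $n$ lies below the cohomology of $K(M)$. Graded Koszul duality (Theorem~3.8(1)) then converts this into the coKoszul Ext criterion.

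That route rests on identifying finitely copresented with coperfect $\Lambda^{!}$-modules (Proposition~3.7), hence on $\Lambda$ being finite-dimensional. The regularity argument stays inside the monomial algebra and does not need this.
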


\begin{proof}
We prove the dual statement; the first follows by duality. Let $M$ be a finitely copresented $\Lambda^{!}$-module. By Proposition~3.4 in~\cite{5}, the linear complex $K(M)$ has bounded cohomology.

Let $n$ be the largest integer such that
\[
H^{n+1}\bigl(K(M)\bigr)\ne 0.
\]
Consider the short exact sequence
\[
0 \longrightarrow \tau_{> n}M \longrightarrow M \longrightarrow \tau_{\le n}M \longrightarrow 0.
\]
We claim that $\tau_{\le n}M$ is colinear, equivalently that $\tau_{\le n}M\langle -n\rangle$ is coKoszul.

Since truncation only removes components in degrees $> n$, it follows that the complex $K(\tau_{\le n}M\langle -n\rangle)$ has vanishing cohomology in all degrees except possibly in degree $0$, that is,
\[
H^{j}\bigl(K(\tau_{\le n}M\langle -n\rangle)\bigr)=0
\quad \text{for all } j \neq 0.
\]

By Theorem~3.8(1), for all $x \in Q_0$ and $i \in \mathbb{Z}$, there is an isomorphism
\[
\begin{aligned}
\operatorname{Ext}^k_{\Lambda^{!}\textup{-Fcp}^{\mathbb{Z}}}
\bigl(S^{!}_x\langle -i\rangle, \tau_{\le n}M\langle -n\rangle\bigr)
&\cong
\operatorname{Hom}_{\mathsf{D}^b(\Lambda\textup{-gmod})}
\bigl(P_x\langle -i\rangle[i], K(\tau_{\le n}M\langle -n\rangle)[k]\bigr) \\
&\cong
\operatorname{Hom}_{\mathsf{D}^b(\Lambda\textup{-gmod})}
\bigl(P_x\langle -i\rangle, K(\tau_{\le n}M\langle-n\rangle)[k-i]\bigr).
\end{aligned}
\]
The vanishing of $H^{l}\bigl(K(\tau_{\le n}M\langle -n\rangle)\bigr)$ for $l \neq 0$ implies that the right-hand side vanishes unless $k-i=0$, that is, $i=k$. Hence
\[
\operatorname{Ext}^k_{\Lambda^{!}\textup{-Fcp}^{\mathbb{Z}}}
\bigl(S^{!}_x\langle -i\rangle, \tau_{\le n}M\langle -n\rangle\bigr)=0
\quad \text{for all } i \neq k,
\]
for all $x \in Q_0$. This shows that $\tau_{\le n}M\langle -n\rangle$ is coKoszul, and hence $\tau_{\le n}M$ is colinear.
\end{proof}

We obtain a consequence concerning the Hilbert series of finitely copresented and finitely presented modules over \(\Lambda^{!}\).

First, recall that the Hilbert series of a locally finite-dimensional graded module 
\(M \in \Lambda^{!}\textup{-GMod}\) is defined by
\[
H_M(t)
=
\sum_{i \in \mathbb{Z}} \bigl(\dim_k M_i\bigr)\, t^i.
\]

\begin{corollary}
Let \(M \in \Lambda^{!}\textup{-GMod}\) be a finitely copresented (resp.\ finitely presented) graded module. Then the Hilbert series \(H_M(t)\) is a rational function.
\end{corollary}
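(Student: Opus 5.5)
We argue for a finitely copresented graded \(\Lambda^{!}\)-module \(M\); the finitely presented case is handled the same way, using the first half of Proposition~3.12 instead of the second. By Proposition~3.12 there is a short exact sequence
\[
0 \longrightarrow N' \longrightarrow M \longrightarrow L' \longrightarrow 0
\]
with \(N'\) finite-dimensional and \(L'\) colinear. Hilbert series are additive on short exact sequences of locally finite-dimensional graded modules, so \(H_M(t)=H_{N'}(t)+H_{L'}(t)\). Since \(N'\) is finite-dimensional, \(H_{N'}(t)\) is a Laurent polynomial. It therefore suffices to show that \(H_{L'}(t)\) is rational. Because \(H_{L'\langle r\rangle}(t)=t^{r}H_{L'}(t)\), we may replace \(L'\) by a shift and assume \(L'\) is coKoszul.

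For a coKoszul \(\Lambda^{!}\)-module \(L'\), take its finite colinear injective coresolution
\[
0\to L'\to I^{0}\to\cdots\to I^{m}\to 0,
\qquad
I^{j}\cong\bigoplus_{x} I^{!}_x\langle -j\rangle^{c_{j,x}}.
\]
Finiteness of this coresolution is part of the definition of coKoszul; it is also automatic here, since \(\Lambda^{!}\) has finite global dimension. Additivity then gives
\[
H_{L'}(t)=\sum_{j,x}(-1)^{j}c_{j,x}\,t^{-j}H_{I^{!}_x}(t).
\]
The problem is thus reduced to the rationality of the Hilbert series of each indecomposable graded injective \(I^{!}_x\). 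The finitely presented case reduces in the same way, via a finite linear projective resolution, to the series \(H_{P^{!}_x}(t)\).

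The remaining step, rationality of \(H_{I^{!}_x}\) and \(H_{P^{!}_x}\), is where the monomial structure is used. The space \((I^{!}_x)_n\) has as basis the nonzero paths of length \(n\) in \(\Lambda^{!}\) ending at \(x\). Since \(\Lambda^{!}=kQ^{\mathrm{op}}/I^{!}\) has quadratic monomial relations, a path is nonzero exactly when each consecutive pair of arrows is an allowed pair. Let \(A\) be the transfer matrix indexed by arrows of \(Q^{\mathrm{op}}\), with \(A_{\beta\gamma}=1\) if \(\beta\gamma\notin I^{!}\) (in the path-composition order) and \(0\) otherwise. The number of nonzero paths of length \(n\ge 1\) ending at \(x\) is then \(u^{T}A^{n-1}v_x\) for suitable \(0/1\) vectors \(u\) and \(v_x\). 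Consequently
\[
H_{I^{!}_x}(t)=1+t\,u^{T}(\mathrm{Id}-tA)^{-1}v_x,
\]
which is rational (the constant term records the trivial path at \(x\)). The same counting applies to \(P^{!}_x\) with the start vertex fixed instead of the end vertex.

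The main point to be careful with is the grading convention. The degree-\(n\) component of \(I^{!}_x\) sits in degree \(n\) or \(-n\) depending on the paper's convention, so \(t\) may need to be replaced by \(t^{-1}\), which preserves rationality. One also needs the coefficients to be finite in each degree; this holds because \(Q\) is finite, so every graded piece is finite-dimensional.
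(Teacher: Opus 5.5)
Your proof is correct, but it takes a different route from the paper's.

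\textbf{The paper's route.} The paper also starts from Proposition~3.12. It uses it only to observe that finitely presented $\Lambda^{!}$-modules satisfy property~(L) of [27, Definition~5.1]. It then gets rationality by quoting [27, Corollary~6.6], and declares the copresented case dual.

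\textbf{Your route.} You replace the citation with a self-contained computation. Additivity along the truncation sequence and along a finite (co)linear (co)resolution reduces everything to the Hilbert series of the indecomposable $P^{!}_x$ and $I^{!}_x$. You compute those with the transfer matrix of allowed consecutive pairs of arrows. That is exactly where the quadratic monomial relations of $\Lambda^{!}$ enter; in the paper this input is hidden inside the external result.

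\textbf{What each buys.} Your approach gives transparency and an explicit denominator: a divisor of $\det(\mathrm{Id}-tA)$ up to a monomial, modulo the $t\leftrightarrow t^{-1}$ grading convention you correctly flag. It also shows that (co)linearity plays no real role in rationality. All that matters is a finite (co)resolution whose terms are finite sums of shifted indecomposable projectives (injectives). So, granting Proposition~3.7 ($\Lambda^{!}\textup{-Fcp}^{\mathbb{Z}}=\Lambda^{!}\textup{-Cop}^{\mathbb{Z}}$), you could run the same additivity argument on $M$ directly and skip the truncation. In your version, colinearity of $L'$ only serves to make finiteness of its minimal coresolution evident: a single internal shift per homological degree, finite multiplicities by local finiteness, and finite length by the finite global dimension of $\Lambda^{!}$. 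The paper's route is shorter and ties the statement to the general theory of modules with linear truncations, at the cost of resting entirely on an external result.
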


\begin{proof}
By Proposition~3.12, finitely presented \(\Lambda^{!}\)-modules are precisely those satisfying the property~(L) in~\cite[Definition~5.1]{27}. Hence the rationality of the Hilbert series follows from~\cite[Corollary~6.6]{27}.

The corresponding statement for finitely copresented modules is obtained by a dual argument.
\end{proof}
The following corollary provides a characterization of the tails and cotails categories
\[
\Lambda^{!}\textup{-Fcp}^{\mathbb{Z}} \big/ \Lambda^{!}\textup{-gmod}
\quad \textup{and} \quad
\Lambda^{!}\textup{-Fp}^{\mathbb{Z}} \big/ \Lambda^{!}\textup{-gmod}.
\]

\begin{corollary}
Every object in the category \(\Lambda^{!}\textup{-Fp}^{\mathbb{Z}} \big/ \Lambda^{!}\textup{-gmod}\) (resp.\ \(\Lambda^{!}\textup{-Fcp}^{\mathbb{Z}} \big/ \Lambda^{!}\textup{-gmod}\)) is isomorphic to a linear (resp.\ colinear) \(\Lambda^{!}\)-module.
\end{corollary}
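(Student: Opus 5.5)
The plan is to combine Proposition~3.12 with the definition of the Serre quotients. We treat the cotails case \(\Lambda^{!}\textup{-Fcp}^{\mathbb{Z}}/\Lambda^{!}\textup{-gmod}\) first; the tails case is formally dual. Let \(M\) be a finitely copresented graded \(\Lambda^{!}\)-module.

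\textbf{Step 1 (the truncation sequence).} By Proposition~3.12, applied with the integer \(n\) constructed in its proof, there is a short exact sequence
\[
0 \longrightarrow \tau_{>n}M \longrightarrow M \longrightarrow \tau_{\le n}M \longrightarrow 0,
\]
in which \(\tau_{>n}M\) is finite-dimensional, because \(M\) is right bounded and locally finite-dimensional, and \(\tau_{\le n}M\) is colinear.

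\textbf{Step 2 (the quotient lies in the right category).} We need \(\tau_{\le n}M\) to lie in \(\Lambda^{!}\textup{-Fcp}^{\mathbb{Z}}\), so that its image in the quotient makes sense. By Proposition~3.7, \(\Lambda^{!}\textup{-Fcp}^{\mathbb{Z}}=\Lambda^{!}\textup{-Cop}^{\mathbb{Z}}\) is abelian. Since \(\Lambda\) is finite-dimensional, \(\Lambda^{!}\textup{-gmod}\subseteq \Lambda^{!}\textup{-Cop}^{\mathbb{Z}}\): finite-dimensional modules are iterated extensions of simples, and each simple is coperfect, its injective coresolution being \(G(P_x)\). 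Hence \(\tau_{\le n}M\), being the cokernel of a monomorphism between objects of this abelian subcategory, again lies in \(\Lambda^{!}\textup{-Fcp}^{\mathbb{Z}}\). Alternatively, one can argue directly that a coKoszul module has a finite colinear coresolution, hence is coperfect.

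\textbf{Step 3 (passing to the quotient).} In the Serre quotient \(\Lambda^{!}\textup{-Fcp}^{\mathbb{Z}}/\Lambda^{!}\textup{-gmod}\), a morphism whose kernel and cokernel are finite-dimensional becomes an isomorphism. The epimorphism \(M\to\tau_{\le n}M\) has finite-dimensional kernel \(\tau_{>n}M\) and zero cokernel, so \(M\cong \tau_{\le n}M\) in the cotails category, and the latter is colinear.

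\textbf{Step 4 (the tails case).} For \(\Lambda^{!}\textup{-Fp}^{\mathbb{Z}}/\Lambda^{!}\textup{-gmod}\) we apply the dual half of Proposition~3.12, namely the version for finitely presented \(\Lambda^{!}\)-modules obtained by the same argument using the second equivalence in Theorem~3.8(1). This gives \(0\to N\to M\to L\to 0\) with \(N=\tau_{\ge r}M\) linear and \(L\) finite-dimensional. Then the monomorphism \(N\to M\) has finite-dimensional cokernel, so it is an isomorphism in the tails category. We again check that \(N\in\Lambda^{!}\textup{-Fp}^{\mathbb{Z}}\), as the kernel of a morphism between objects of an abelian category containing \(\Lambda^{!}\textup{-gmod}\).

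\textbf{Main obstacle.} The main obstacle is exactly this dual step. Proposition~3.12 is literally stated for finitely presented \(\Lambda\)-modules and finitely copresented \(\Lambda^{!}\)-modules. I would therefore first verify that its proof transposes to finitely presented \(\Lambda^{!}\)-modules, using the equivalence \(\mathsf{D}^{b}(\Lambda\textup{-gmod})\simeq\mathsf{D}^{b}(\Lambda^{!}\textup{-Fp}^{\mathbb{Z}})\) together with the Ext-characterization of Koszul modules. I would also check that finite-dimensional modules lie in \(\Lambda^{!}\textup{-Fp}^{\mathbb{Z}}\); this uses that \(\Lambda^{!}\) has finite global dimension and that simples are finitely presented.
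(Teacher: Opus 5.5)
Your proposal is correct and takes the same route as the paper, whose proof consists only of the remark that the corollary follows immediately from Proposition~3.12; your Steps 1--4 unpack that remark by killing the finite-dimensional term of the sequence in the Serre quotient and checking that the linear (resp.\ colinear) term lies in \(\Lambda^{!}\textup{-Fp}^{\mathbb{Z}}\) (resp.\ \(\Lambda^{!}\textup{-Fcp}^{\mathbb{Z}}\)). You are also right that the first half of Proposition~3.12 has to be read as a statement about finitely presented \(\Lambda^{!}\)-modules, which is how the paper itself uses it in the proof of Corollary~3.13.
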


\begin{proof}
This follows immediately from Proposition~3.12.
\end{proof}
The ungraded derived and singular Koszul dualities admit the following refinement.

\begin{theorem}
Let \(\Lambda\) be a finite-dimensional quadratic monomial algebra. Then the following statements hold.

\begin{enumerate}
\item \textbf{Ungraded derived Koszul duality.}
There is a triangulated equivalence
\[
\mathrm{H}^{0}\!\Bigl(
\operatorname{pretr}\bigl(
\mathsf{D}^{b}_{\mathrm{dg}}\bigl(\Lambda^{!}\textup{-Fcp}^{\mathbb{Z}}\bigr)
\big/ \langle 1 \rangle[1]
\bigr)
\Bigr)
\;\xrightarrow{\ \sim\ }\;
\mathsf{D}^{b}\bigl(\Lambda\textup{-mod}\bigr).
\]

\item \textbf{Ungraded singular Koszul duality.}
There is a triangulated equivalence
\[
\mathsf{D}^{b}\bigl(
\Lambda^{!}\textup{-Fcp}^{\mathbb{Z}} \big/ \Lambda^{!}\textup{-gmod}
\bigr)
\big/ \langle 1 \rangle[1]
\;\xrightarrow{\ \sim\ }\;
\mathsf{D}_{\mathrm{sg}}\bigl(\Lambda\textup{-mod}\bigr).
\]
\item \textbf{Ungraded BGG correspondance.}. If, in addition, \(\Lambda\) is Iwanaga--Gorenstein, then there is a triangulated equivalence
\[\mathsf{D}^{b}\bigl(
\Lambda^{!}\textup{-Fcp}^{\mathbb{Z}} \big/ \Lambda^{!}\textup{-gmod}
\bigr)
\big/ \langle 1 \rangle[1]
\;\xrightarrow{\ \sim\ }\;
\Lambda\textup{-}\underline{\mathrm{Gproj}}\]
\end{enumerate}
\end{theorem}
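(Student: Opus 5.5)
The plan is to deduce all three statements from Theorem~2.1(iii) and (iv), using Proposition~3.7 to replace \(\Lambda^{!}\textup{-Cop}^{\mathbb{Z}}\) by \(\Lambda^{!}\textup{-Fcp}^{\mathbb{Z}}\). Since \(\Lambda^{!}\textup{-Cop}^{\mathbb{Z}}=\Lambda^{!}\textup{-Fcp}^{\mathbb{Z}}\) as exact subcategories of \(\Lambda^{!}\textup{-GMod}\), their bounded dg derived categories agree. Moreover, the dg orbit categories by the autoequivalence \(\langle 1\rangle[1]\) coincide, and so do their pretriangulated hulls. Statement~(1) is then literally Theorem~2.1(iii), first part. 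The only point to check is that the exact structure on \(\Lambda^{!}\textup{-Cop}^{\mathbb{Z}}\) used in~\cite{5} agrees with the abelian structure of \(\Lambda^{!}\textup{-Fcp}^{\mathbb{Z}}\) from Proposition~3.7. Both are inherited from short exact sequences in \(\Lambda^{!}\textup{-GMod}\), because Theorem~3.3 gives that \(\Lambda^{!}\textup{-Fcp}^{\mathbb{Z}}\) is an exact abelian subcategory closed under kernels and cokernels.

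For (2) and (3), Theorem~2.1(iii) and (iv) give \(\mathsf{D}_{\mathrm{sg}}(\Lambda\textup{-mod})\), respectively \(\Lambda\textup{-}\underline{\mathrm{Gproj}}\), as \(\mathrm{H}^0\) of the pretriangulated hull of the dg orbit category of \(\mathsf{D}^b_{dg}(\Lambda^{!}\textup{-Fcp}^{\mathbb{Z}}/\Lambda^{!}\textup{-gmod})\) by \(\langle 1\rangle[1]\). The new content is that we may drop ``\(\mathrm{H}^0\operatorname{pretr}\)'' and use the naive orbit category \(\mathsf{D}^{b}(\mathcal{T})/\langle 1\rangle[1]\), where \(\mathcal{T}=\Lambda^{!}\textup{-Fcp}^{\mathbb{Z}}/\Lambda^{!}\textup{-gmod}\). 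I will show that this orbit category is already triangulated, so that the canonical fully faithful embedding of \(\mathrm{H}^0\) of the dg orbit category into \(\mathrm{H}^0\operatorname{pretr}\) is essentially surjective. This is the analogue of Keller's theorem on orbit categories of derived categories of hereditary categories. By Corollary~3.9, \(\mathcal{T}\) is hereditary, so every object of \(\mathsf{D}^b(\mathcal{T})\) is a finite direct sum of shifted objects of \(\mathcal{T}\).

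The key steps follow Keller's criteria. First, \(\mathcal{T}\) is hereditary and abelian (Proposition~3.7 and Corollary~3.9). Second, for each indecomposable \(U\in\mathcal{T}\), only finitely many of the objects \(F^{n}U=U\langle n\rangle[n]\) lie in \(\mathcal{T}\) up to shift \(0\); this is immediate, since \(F^{n}U\) sits in cohomological degree \(-n\). Third, there is some \(N\ge 0\) such that the \(F\)-orbit of every indecomposable object of \(\mathsf{D}^b(\mathcal{T})\) meets \(\mathcal{T}[0],\dots,\mathcal{T}[N]\); again this holds since \(F\) shifts the cohomological degree by one, so every orbit contains an object of \(\mathcal{T}[0]\). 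Fourth, \(\mathcal{T}\) must satisfy Keller's finiteness hypothesis that \(\operatorname{Hom}\) spaces are finite-dimensional, or at least the weaker locally-finite condition on orbit Hom sums. To verify this, I would transport through the graded singular duality of Theorem~3.8(2), identifying \(\bigoplus_n\operatorname{Hom}(X,F^nY)\) with \(\operatorname{Hom}_{\mathsf{D}_{\mathrm{sg}}(\Lambda\textup{-mod})}\) of the forgetful images. These are finite-dimensional, because \(\Lambda\) is finite-dimensional and Hom spaces in \(\mathsf{D}_{\mathrm{sg}}(\Lambda\textup{-mod})\) are finite-dimensional. Lemma~3.11 provides the compatibility \(\mathfrak{F}(X\langle i\rangle)\cong\mathfrak{F}(X)\langle i\rangle[-i]\), so that the orbit functor \(\langle 1\rangle[1]\) corresponds to the forgetful functor \(\Lambda\textup{-gmod}\to\Lambda\textup{-mod}\).

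Keller's theorem then gives that \(\mathsf{D}^b(\mathcal{T})/F\) is triangulated and coincides with \(\mathrm{H}^0\operatorname{pretr}\). Combining this with Theorem~2.1(iii) yields (2), and combining it with Theorem~2.1(iv) yields (3) when \(\Lambda\) is Iwanaga--Gorenstein. The main obstacle is the fourth step. Keller's original hypotheses require a \(k\)-linear hereditary category with finite-dimensional Hom spaces and an autoequivalence of the derived category, whereas the cotails category \(\mathcal{T}\) has only graded-finite Homs. I would first try to verify Keller's condition directly in this graded form, via the identification with the ungraded singularity category. Failing that, I would argue essential surjectivity by hand: a cone in \(\mathrm{H}^0\operatorname{pretr}\) of a morphism \(X\to\bigoplus_n F^nY\) between objects of the orbit category is, by heredity and the finiteness of the sum, the image of a cone computed in \(\mathsf{D}^b(\mathcal{T})\) after lifting the finitely many components to a single degree.
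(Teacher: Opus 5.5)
Your treatment of (1) matches the paper, which also takes it from Theorem~2.1(iii) together with Proposition~3.7. For (2)--(3) there is a genuine gap, located at the step you flag as the main obstacle, and the justification you give there is false. In general \(\mathsf{D}_{\mathrm{sg}}(\Lambda\textup{-mod})\) is not Hom-finite, and neither is \(\mathcal{T}\). Take \(\Lambda=k\langle x,y\rangle/(x,y)^{2}\) (one vertex, two loops), which is a finite-dimensional quadratic monomial algebra. For the simple module \(S\) one has \(\Omega S\cong S\oplus S\) and \(\underline{\operatorname{End}}_{\Lambda}(S)=k\), so
\[
\operatorname{End}_{\mathsf{D}_{\mathrm{sg}}(\Lambda\textup{-mod})}(S)\cong\varinjlim_{n}\underline{\operatorname{End}}_{\Lambda}\bigl(S^{2^{n}}\bigr)\cong\varinjlim_{n}M_{2^{n}}(k),
\]
which is infinite-dimensional. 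The category is not even Krull--Schmidt, since \(S\cong S[1]\oplus S[1]\) there. The same computation with \(\Omega S\cong S\langle 1\rangle^{2}\) shows that \(\mathsf{D}^{b}(\mathcal{T})\simeq\mathsf{D}_{\mathrm{sg}}(\Lambda\textup{-gmod})\) also has infinite-dimensional Hom spaces, so \(\mathcal{T}\) is not Hom/Ext-finite. Keller's theorem is formulated for Hom-finite hereditary categories, with hypotheses phrased via indecomposables, so it does not apply. The only finiteness you actually have is \(\bigoplus_{n}\operatorname{Hom}(X,F^{n}Y)=\operatorname{Hom}_{\mathcal{T}}(X,Y)\oplus\operatorname{Ext}^{1}_{\mathcal{T}}(X,Y\langle 1\rangle)\) for \(X,Y\in\mathcal{T}\).

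Your fallback does not fill the gap either. A morphism \(X\to Y\) in the orbit category is a pair \((f_{0},f_{1})\) with \(f_{0}\in\operatorname{Hom}_{\mathcal{T}}(X,Y)\) and \(f_{1}\in\operatorname{Ext}^{1}_{\mathcal{T}}(X,Y\langle 1\rangle)\). When both components are nonzero, it is in general not the image of a single morphism of \(\mathsf{D}^{b}(\mathcal{T})\), so there is no way to ``lift to a single degree.'' In fact, the orbit category embeds fully faithfully (Galois covering, plus Lemma~3.11) into \(\mathsf{D}_{\mathrm{sg}}(\Lambda\textup{-mod})\), with essential image the gradable objects. So essential surjectivity onto \(\mathrm{H}^{0}\operatorname{pretr}(\cdots)\) is \emph{equivalent} to density of the forgetful functor \(\mathsf{D}_{\mathrm{sg}}(\Lambda\textup{-gmod})\to\mathsf{D}_{\mathrm{sg}}(\Lambda\textup{-mod})\). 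Heredity of \(\mathcal{T}\) concerns only graded objects, and your sketch gives no mechanism for putting gradings on ungraded ones.

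That gradability input is the missing idea, and it is exactly what the paper uses. By \cite[Section~4]{5}, it suffices to show the forgetful functor is dense. Density holds because every object of \(\mathsf{D}_{\mathrm{sg}}(\Lambda\textup{-mod})\) is a shift of a second syzygy module. Over a quadratic monomial algebra such a module is a finite direct sum of left ideals \(\Lambda\alpha\) (Lemma~3.2), hence graded. Statement (3) then follows from (2) and \(\mathsf{D}_{\mathrm{sg}}(\Lambda\textup{-mod})\simeq\Lambda\textup{-}\underline{\mathrm{Gproj}}\) for Iwanaga--Gorenstein \(\Lambda\).
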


\begin{proof}
For the second statement, it suffices to show that the forgetful functor
\[
F \colon \mathsf{D}_{\mathrm{sg}}\bigl(\Lambda\textup{-gmod}\bigr)
\longrightarrow
\mathsf{D}_{\mathrm{sg}}\bigl(\Lambda\textup{-mod}\bigr)
\]
is dense; see \cite[Section~4]{5}.

Let \(M\) be a finite-dimensional \(\Lambda\)-module. Every syzygy of \(M\) decomposes as a finite
direct sum of modules of the form \(\Lambda \alpha\), and is therefore graded.

It follows that every object of
\(\mathsf{D}_{\mathrm{sg}}\bigl(\Lambda\textup{-mod}\bigr)\) is isomorphic to
the image under \(F\) of an object in
\(\mathsf{D}_{\mathrm{sg}}\bigl(\Lambda\textup{-gmod}\bigr)\). Hence \(F\) is
dense, and the claimed equivalence follows.
\end{proof}
We shall now use the semisimplicity of the stable category of
Gorenstein-projective modules; see~\cite[Theorem~5.7]{7}. This yields the
following structural consequence for the tail and cotail categories associated
with \(\Lambda^{!}\).

\begin{corollary}
Assume that \(\Lambda\) is Iwanaga--Gorenstein. Then the quotient categories
\[
\Lambda^{!}\textup{-Fcp}^{\mathbb{Z}}
\big/
\Lambda^{!}\textup{-gmod}
\qquad\text{and}\qquad
\Lambda^{!}\textup{-Fp}^{\mathbb{Z}}
\big/
\Lambda^{!}\textup{-gmod}
\]
are semisimple.
\end{corollary}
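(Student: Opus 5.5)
The plan is to transport semisimplicity across the graded BGG correspondence of Theorem~3.8(3). Since \(\Lambda\) is Iwanaga--Gorenstein, that theorem gives triangulated equivalences
\[
\Lambda\textup{-}\underline{\mathrm{Gproj}}^{\mathbb{Z}}
\simeq
\mathsf{D}^{b}\bigl(\Lambda^{!}\textup{-Fp}^{\mathbb{Z}}/\Lambda^{!}\textup{-gmod}\bigr)
\simeq
\mathsf{D}^{b}\bigl(\Lambda^{!}\textup{-Fcp}^{\mathbb{Z}}/\Lambda^{!}\textup{-gmod}\bigr).
\]
By Proposition~3.7 the two quotients are abelian, and by Corollary~3.9 they are hereditary. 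Let \(\mathcal{A}\) denote either quotient. It then suffices to show that \(\operatorname{Ext}^1_{\mathcal{A}}(X,Y)=0\) for all objects \(X,Y\). Together with heredity, this makes every short exact sequence in \(\mathcal{A}\) split, which is semisimplicity.

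To get the vanishing, I first want to know that \(\Lambda\textup{-}\underline{\mathrm{Gproj}}^{\mathbb{Z}}\) is semisimple in the sense of \cite[Theorem~5.7]{7}. The ungraded statement there is that, for a Gorenstein monomial algebra, the indecomposable non-projective Gorenstein-projectives are modules \(\Lambda\alpha\) lying on "perfect cycles". Their stable Hom spaces are concentrated in the shifts \(\Sigma\), and the shift permutes these indecomposables. I would pass to the graded setting via the forgetful functor \(\Lambda\textup{-}\underline{\mathrm{Gproj}}^{\mathbb{Z}}\to\Lambda\textup{-}\underline{\mathrm{Gproj}}\). The usual identity
\[
\underline{\operatorname{Hom}}(M,N)=\bigoplus_i \underline{\operatorname{Hom}}^{\mathbb{Z}}(M,N\langle i\rangle)
\]
shows that graded stable Homs between the graded lifts \(\Lambda\alpha\langle j\rangle\) are either zero or one-dimensional isomorphisms, and are nonzero only between appropriate shifts.

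The key point is that, in \(\Lambda\textup{-}\underline{\mathrm{Gproj}}^{\mathbb{Z}}\), any morphism \(X\to Y[1]\) between objects is a direct sum of isomorphisms between indecomposables or zero. Transport this to \(\mathsf{D}^b(\mathcal{A})\). Every object is a sum of shifted stalk complexes by Corollary~3.9, and each nonzero indecomposable object of \(\mathcal{A}\) has image an indecomposable object of the Gorenstein side. A nonzero extension class in \(\operatorname{Ext}^1_{\mathcal{A}}(X,Y)=\operatorname{Hom}(X,Y[1])\) between indecomposables would then be an isomorphism \(X\cong Y[1]\) in \(\mathsf{D}^b(\mathcal{A})\). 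This is impossible, since the stalk complexes \(X\) and \(Y[1]\) are concentrated in different cohomological degrees. Hence \(\operatorname{Ext}^1_{\mathcal{A}}=0\).

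The main obstacle is the middle step: making precise that the graded stable category has no nonzero non-invertible morphisms between indecomposables. Concretely, the ungraded stable endomorphism rings of the \(\Lambda\alpha\) are \(k\), and stable Homs between distinct perfect-path modules vanish. I would try to read this off directly from \cite[Theorem~5.7]{7}, arguing via the description of the \(\Lambda\alpha\) and minimal resolutions from Lemma~3.2. As a fallback, I would compute \(\underline{\operatorname{Hom}}(\Lambda\alpha,\Lambda\beta)\) by hand using the monomial bases.
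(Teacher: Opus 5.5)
Your argument is correct, but it follows a different route from the paper's.

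\textbf{The paper's route.} The paper works on the ungraded side. It uses the ungraded BGG correspondence $\mathsf{D}^{b}(\mathcal{A})/\langle 1\rangle[1]\simeq\Lambda\textup{-}\underline{\mathrm{Gproj}}$, realized by a faithful $\langle 1\rangle[1]$-Galois covering $\pi$. It applies \cite[Theorem~5.7]{7} directly in the ungraded stable category, pulls ``every triangle splits'' back along $\pi$, and leaves the $\mathrm{Fp}$ quotient to duality.

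\textbf{Your route.} You use the graded equivalence of Theorem~3.8(3), which handles both quotients at once. The price is that you must first upgrade \cite[Theorem~5.7]{7} to $\Lambda\textup{-}\underline{\mathrm{Gproj}}^{\mathbb{Z}}$, via $\underline{\operatorname{Hom}}(M,N)=\bigoplus_i\underline{\operatorname{Hom}}^{\mathbb{Z}}(M,N\langle i\rangle)$ and uniqueness of graded lifts of indecomposables. This step is routine.

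\textbf{What each buys.} Your approach turns the decisive vanishing of $\operatorname{Hom}_{\mathsf{D}^b(\mathcal{A})}(X,Y[1])$ into a transparent stalk-degree argument. In the paper's version, faithfulness gives $h=0$ for $h\colon Z\to X[1]$ only once one knows $\pi(h)=0$. Semisimplicity of the ungraded stable category does not give this by itself, for two reasons:
\begin{itemize}
\item split triangulated categories contain triangles such as $X\to 0\to X[1]\xrightarrow{\mathrm{id}}X[1]$, whose third map is nonzero;
\item in the orbit category $\pi(X\langle -1\rangle)\cong\pi(X[1])$, so nonzero maps $\pi Z\to\pi(X[1])$ do exist.
\end{itemize}
One therefore has to check in which summand of $\bigoplus_i\operatorname{Hom}(Z,X\langle i\rangle[i+1])$ such maps can live. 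That is exactly the bookkeeping your argument makes explicit.

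\textbf{Two minor points.} First, heredity (Corollary~3.9) is not needed: $\operatorname{Ext}^1_{\mathcal{A}}=0$ alone makes every short exact sequence split. Second, to pass from indecomposables to arbitrary objects, you should say explicitly that $\Lambda\textup{-}\underline{\mathrm{Gproj}}^{\mathbb{Z}}$ is Krull--Schmidt. You should also note that direct summands of a stalk complex in $\mathsf{D}^b(\mathcal{A})$ are stalks in the same degree, so an isomorphism between summands of $X$ and of $Y[1]$ forces those summands to be zero.
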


\begin{proof}
By Theorem~3.14, there is an equivalence
\[
\mathsf{D}^{b}\Bigl(
\Lambda^{!}\textup{-Fcp}^{\mathbb{Z}} \big/ \Lambda^{!}\textup{-gmod}
\Bigr)
\big/ \langle 1 \rangle[1]
\;\xrightarrow{\ \sim\ }\;
\Lambda\textup{-}\underline{\mathrm{Gproj}},
\]
which arises from a \(\langle 1 \rangle[1]\)-Galois covering. Since such a covering is faithful (see~\cite[Remark~2.33]{5}), and every exact triangle in \(\Lambda\textup{-}\underline{\mathrm{Gproj}}\) splits; see~\cite[Theorem~5.7]{7}. It follows that every triangle in
\[
\mathsf{D}^{b}\Bigl(
\Lambda^{!}\textup{-Fcp}^{\mathbb{Z}} \big/ \Lambda^{!}\textup{-gmod}
\Bigr)
\]
splits as well. Therefore, the category
\[
\Lambda^{!}\textup{-Fcp}^{\mathbb{Z}} \big/ \Lambda^{!}\textup{-gmod}
\]
is semisimple. The statement for
\(\Lambda^{!}\textup{-Fp}^{\mathbb{Z}} \big/ \Lambda^{!}\textup{-gmod}\)
follows dually.
\end{proof}
We recall the following standard fact. Let \(\mathcal A\) be an abelian
semisimple category and let \(T\colon \mathcal A \to \mathcal A\) be an
autoequivalence. Then \(\mathcal A\) carries a split triangulated structure
with translation functor \(T\). The distinguished triangles are exactly those
triangles which, up to isomorphism, decompose as finite direct sums of the
elementary split triangles
\[
X \longrightarrow 0 \longrightarrow T X \xrightarrow{\mathrm{id}} T X,
\qquad
X \xrightarrow{\mathrm{id}} X \longrightarrow 0 \longrightarrow T X,
\qquad
0 \longrightarrow X \xrightarrow{\mathrm{id}} X \longrightarrow 0.
\]

\begin{lemma} Assume that \(\Lambda\) is Iwanaga--Gorenstein. Then
the quotient category
\[
\Lambda^{!}\textup{-Fcp}^{\mathbb Z}/\Lambda^{!}\textup{-gmod},
\]
equipped with the grading shift functor \(\langle 1\rangle\), admits a split
triangulated structure.
\end{lemma}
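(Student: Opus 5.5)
Set $\mathcal A:=\Lambda^{!}\textup{-Fcp}^{\mathbb Z}/\Lambda^{!}\textup{-gmod}$ and $T:=\langle 1\rangle$. The plan is to apply the standard fact recalled just before the statement. That fact requires two inputs: $\mathcal A$ is abelian and semisimple, and $T$ is an autoequivalence of $\mathcal A$. Once both are checked, $\mathcal A$ carries the split triangulated structure with translation functor $\langle 1\rangle$. The distinguished triangles are the finite direct sums of the three elementary split triangles.

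First, abelianness and semisimplicity. By Proposition~3.7, $\mathcal A$ is abelian; it is the cotails category of a graded co-coherent algebra (Theorem~3.3). Since $\Lambda$ is Iwanaga--Gorenstein, Corollary~3.15 shows that $\mathcal A$ is semisimple, i.e.\ every short exact sequence in $\mathcal A$ splits. I would also note that $\mathcal A$ is additive with split idempotents. This is automatic for an abelian category and is what the split triangulated structure needs in order for the octahedral axiom to reduce to direct-sum decompositions.

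Second, $\langle 1\rangle$ descends to an autoequivalence. The grading shift $\langle 1\rangle$ is an exact automorphism of $\Lambda^{!}\textup{-GMod}$ with inverse $\langle -1\rangle$. It sends $I^{!}_x\langle j\rangle$ to $I^{!}_x\langle j+1\rangle$, so it preserves finitely copresented modules. It preserves finite-dimensional modules, hence it preserves the Serre subcategory $\Lambda^{!}\textup{-gmod}$. By the universal property of the Serre quotient, $\langle 1\rangle$ and $\langle -1\rangle$ induce mutually inverse exact functors on $\mathcal A$.

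Third, verify the axioms of the split triangulated structure. Additivity of $T$ and semisimplicity give (TR1)--(TR3) directly. For (TR3), a morphism of split triangles is completed componentwise, using that every morphism in a semisimple category decomposes along summands. For the octahedral axiom, decompose all objects into direct sums of images, kernels and cokernels, all of which are split. Most of this is the standard fact and can simply be cited.

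The main obstacle is ensuring that $\mathcal A$ really is semisimple as an abelian category, and not merely that triangles in $\mathsf D^b(\mathcal A)$ split. Corollary~3.15 is phrased through the Galois covering of Theorem~3.14 and \cite[Theorem~5.7]{7}, so I would reread it as follows. Every short exact sequence $0\to X\to Y\to Z\to 0$ in $\mathcal A$ gives a triangle $X\to Y\to Z\to X[1]$ in $\mathsf D^b(\mathcal A)$. That triangle splits, so $Y\cong X\oplus Z$ compatibly with the maps. Hence $\operatorname{Ext}^1_{\mathcal A}=0$, and $\mathcal A$ is semisimple.
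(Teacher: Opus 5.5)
Your proposal is correct and is the argument the paper intends, since the paper gives no separate proof: semisimplicity of the cotails category comes from Corollary~3.15, and the recalled fact about semisimple abelian categories with an autoequivalence then gives the split triangulated structure; your check that $\langle 1\rangle$ descends to the Serre quotient fills in a step the paper leaves tacit. One caution on your last paragraph: ``every triangle in $\mathsf{D}^{b}(\mathcal A)$ splits'' cannot hold literally (take $X\to 0\to X[1]\xrightarrow{\mathrm{id}}X[1]$), and faithfulness of the ungraded covering alone does not kill the connecting map $Z\to X[1]$ of a short exact sequence, because triangles in $\Lambda\textup{-}\underline{\mathrm{Gproj}}$ can have nonzero third maps; the vanishing of $\operatorname{Hom}_{\mathsf{D}^{b}(\mathcal A)}(Z,X[1])$ for $X,Z\in\mathcal A$ should instead come from the graded equivalence $\mathsf{D}^{b}(\mathcal A)\simeq\Lambda\textup{-}\underline{\mathrm{Gproj}}^{\mathbb Z}$ (Theorem~3.8(3)), whose indecomposables are pairwise Hom-orthogonal with endomorphism ring $k$ by the graded form of the description in~\cite{7}, so that $Z$ and $X[1]$, having cohomology in different degrees, share no indecomposable summand.
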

Recall that there is a functor
\[
K \colon 
\Lambda^!\textup{-GMod} 
   \longrightarrow 
   \mathsf{C}\bigl(\Lambda\textup{-proj}^{\mathbb{Z}}\bigr),
\]
whose restriction to the category of finitely copresented modules is of the form
\[
K \colon 
\Lambda^!\textup{-Fcp}^{\mathbb{Z}}
   \longrightarrow
   \mathsf{C}^{-,b}\bigl(\Lambda\textup{-proj}^{\mathbb{Z}}\bigr).
\]
By the construction of \(K\), this functor sends the subcategory
\(\Lambda^!\textup{-gmod}\) to objects which become zero in the singularity
category. Hence it induces an additive functor
\(\overline{K}\) fitting into the following commutative diagram:
\[
\begin{tikzcd}[column sep=small]
\Lambda^!\textup{-Fcp}^{\mathbb{Z}}
   \ar[d, two heads, "\pi"'] 
   \ar[r, "K"] 
 & \mathsf{C}^{-,b}\bigl(\Lambda^{\mathbb{Z}}\textup{-proj}\bigr) 
   \ar[r] 
 & \mathsf{K}^{-,b}\bigl(\Lambda^{\mathbb{Z}}\textup{-proj}\bigr)
   \ar[r, "\sim"] 
 & \mathsf{D}^b(\Lambda\textup{-gmod}) 
   \ar[r] 
 & \mathsf{D}_{\mathrm{sg}}(\Lambda\textup{-gmod}) 
\\
\Lambda^!\textup{-Fcp}^{\mathbb{Z}} / \Lambda^!\textup{-gmod}
   \ar[rrrru, dashed, "\overline{K}"']
\end{tikzcd}
\]
In the present situation, this observation yields the following result, which
generalizes the corresponding statement for radical-square-zero algebras;
see \cite[Theorem~6.5]{5}.
\begin{proposition}
Assume that \(\Lambda\) is Iwanaga--Gorenstein. Equip the stable category
\[
\Lambda\textup{-}\underline{\mathrm{Gproj}}^{\mathbb Z}
\]
with the split triangulated structure whose translation functor is
\[
\langle 1\rangle[-1],
\]
where \([-1]=\Omega\) is the inverse of the suspension induced by the Frobenius
structure. Then the functor
\[
\overline{K}\colon
\Lambda^{!}\textup{-Fcp}^{\mathbb Z}/\Lambda^{!}\textup{-gmod}
\longrightarrow
\Lambda\textup{-}\underline{\mathrm{Gproj}}^{\mathbb Z}
\]
is a triangulated equivalence.
\end{proposition}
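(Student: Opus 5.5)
The plan is to factor \(\overline{K}\) through the graded singular Koszul duality of Theorem~3.8(2) and the graded BGG correspondence of Theorem~3.8(3). Then I will check that, under the split triangulated structures of Lemma~3.17 and of the statement, the composite is additive, essentially surjective and fully faithful. After that it only remains to compare translation functors.

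\textbf{Step 1: \(\overline{K}\) agrees with the derived duality on the heart.} The quotient \(\mathcal{A}:=\Lambda^{!}\textup{-Fcp}^{\mathbb Z}/\Lambda^{!}\textup{-gmod}\) embeds as the heart (degree-zero objects) in \(\mathsf{D}^{b}(\mathcal{A})\). By Corollary~3.16 the category \(\mathcal{A}\) is semisimple, so \(\mathsf{D}^{b}(\mathcal{A})\simeq\bigoplus_{n}\mathcal{A}[n]\) with no nonzero morphisms between different shifts. On stalk complexes, the singular Koszul duality of Theorem~3.8(2) is induced by \(\mathfrak{F}\), and \(\mathfrak{F}\) on a single module \(M\) is just \(K(M)\) viewed in \(\mathsf{D}^{b}(\Lambda\textup{-gmod})\). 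Hence its restriction to the heart coincides with \(\overline{K}\), followed by the identification \(\mathsf{D}_{\mathrm{sg}}(\Lambda\textup{-gmod})\simeq\Lambda\textup{-}\underline{\mathrm{Gproj}}^{\mathbb Z}\) (Buchweitz, valid since \(\Lambda\) is Iwanaga--Gorenstein). Because the restriction of an equivalence to a full subcategory is fully faithful, \(\overline{K}\) is fully faithful.

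\textbf{Step 2: density.} Let \(X\in\Lambda\textup{-}\underline{\mathrm{Gproj}}^{\mathbb Z}\). Its preimage in \(\bigoplus_n\mathcal{A}[n]\) is a finite sum \(\bigoplus_n A_n[n]\). Each \(\mathfrak{F}(A_n[n])\) is a summand of \(X\), and \(\mathfrak{F}(A_n[n])\cong\mathfrak{F}(A_n)[n]\). I will use Lemma~3.11 to trade shifts for grading shifts:
\[
\mathfrak{F}(A\langle -n\rangle)\cong\mathfrak{F}(A)\langle -n\rangle[n].
\]
This gives \(\mathfrak{F}(A_n)[n]\cong\overline{K}(A_n\langle -n\rangle)\langle n\rangle\). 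I still have to absorb the leftover \(\langle n\rangle\). For this I plan to use the grading-shift invariance of \(\Lambda\textup{-}\underline{\mathrm{Gproj}}^{\mathbb Z}\) together with \(\overline{K}(A\langle m\rangle)\cong\overline{K}(A)\langle m\rangle[-m]\), so that iterating combines the shifts. More directly, \(\Omega\)-periodicity up to grading twist should follow from the description of Gorenstein projectives over monomial algebras as sums of \(\Lambda\alpha\) (Proposition~3.1, \cite{7}). Then every summand \(X_n\) is of the form \(\overline{K}(B)\) for some \(B\), and \(X\cong\overline{K}\bigl(\bigoplus B_n\bigr)\) by additivity.

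\textbf{Step 3: compatibility with translation and triangles.} Lemma~3.11 gives a natural isomorphism \(\overline{K}(M\langle 1\rangle)\cong\overline{K}(M)\langle 1\rangle[-1]\). This says \(\overline{K}\) intertwines \(\langle 1\rangle\) with \(\langle 1\rangle[-1]\). An additive functor between split triangulated categories that commutes with the translations sends the elementary split triangles to elementary split triangles, and therefore sends direct sums of them to distinguished triangles. So \(\overline{K}\) is a triangle functor, and together with Steps 1–2 it is a triangulated equivalence.

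The main obstacle is Step 2. A Gorenstein projective in degree \(n\) of the singularity category corresponds to an object in cohomological degree \(n\), not in the heart, so density of the heart alone is not formal. The argument rests on showing that \([n]\) can be converted into grading shifts via Lemma~3.11 together with the explicit \(\Omega\)-behaviour of the modules \(\Lambda\alpha\). I will try this first by checking \(\Omega(\Lambda\alpha)\cong\bigoplus_{\beta\alpha=0}\Lambda\beta\langle 1\rangle\), which is read off from the computation in Proposition~3.1.
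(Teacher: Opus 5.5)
\textbf{Step 2 (density) is a genuine gap, and it cannot be closed: the statement as written is false.} Your Steps 1 and 3 are sound, granting the earlier results. They are also essentially all the paper offers: it states the proposition right after the diagram defining \(\overline{K}\), with an appeal to [5, Theorem 6.5] and no separate proof.

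Your own Step 1 already rules out Step 2. Write \(\Phi\colon\mathsf{D}^{b}(\mathcal{A})\xrightarrow{\sim}\Lambda\textup{-}\underline{\mathrm{Gproj}}^{\mathbb Z}\) for the equivalence of Theorem 3.8, so that \(\overline{K}=\Phi|_{\mathcal{A}}\). The essential image of \(\overline{K}\) is then \(\Phi(\mathcal{A})\). For \(0\neq A\in\mathcal{A}\) and \(n\neq 0\), the object \(\Phi(A[n])\) is isomorphic to no \(\Phi(B)\) with \(B\in\mathcal{A}\), because \(A[n]\not\cong B\) in \(\mathsf{D}^{b}(\mathcal{A})\). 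By Lemma 3.11, \(\Phi(\mathcal{A}[n])\) is exactly \(\Phi(\mathcal{A})\) twisted by a nonzero grading shift. So the ``leftover \(\langle n\rangle\)'' is precisely what can never be absorbed. Periodicity of \(\Omega\) up to grading shift does not help. It only says that \(\Phi(\mathcal{A})\) is stable under the combined translation of Step 3, which corresponds to \(\langle 1\rangle\) on \(\mathcal{A}\); that is a statement internal to \(\Phi(\mathcal{A})\). Put differently, the proposition together with Theorem 3.8(3) would force the heart inclusion \(\mathcal{A}\hookrightarrow\mathsf{D}^{b}(\mathcal{A})\) to be an equivalence, which happens only when \(\mathcal{A}=0\).

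Here is a concrete counterexample. Take \(\Lambda=k[x]/(x^{2})\): one loop, self-injective, hence Iwanaga--Gorenstein, with \(\Lambda^{!}=k[y]\).
\begin{itemize}
\item By graded Matlis duality, every finitely copresented graded \(k[y]\)-module is, modulo finite-dimensional modules, a finite sum of shifts of \(I=D(k[y])\).
\item The projection \(I\to I/\operatorname{soc}I\) shows that \(I\) is isomorphic in \(\mathcal{A}\) to all its shifts, and \(\operatorname{End}_{\mathcal{A}}(I)=k\).
\item Hence \(\mathcal{A}\simeq k\textup{-mod}\) has a single indecomposable. The essential image of \(\overline{K}\) is \(\operatorname{add}(S)\), where \(S=\overline{K}(I)\) is the simple module in degree \(0\) (Theorem 2.3).
\item On the other side, \(\Lambda\textup{-}\underline{\mathrm{Gproj}}^{\mathbb Z}=\Lambda\textup{-}\underline{\mathrm{gmod}}\) contains the pairwise non-isomorphic indecomposables \(S\langle i\rangle\), \(i\in\mathbb{Z}\), with \(\underline{\operatorname{Hom}}(S\langle i\rangle,S\langle j\rangle)=0\) for \(i\neq j\).
\end{itemize}
So the target is equivalent to finite-dimensional \(\mathbb{Z}\)-graded vector spaces, i.e.\ to \(\mathsf{D}^{b}(\mathcal{A})\) as Theorem 3.8 predicts, and not to \(\mathcal{A}\).

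What your argument does prove is two things. First, \(\overline{K}\) is a triangle equivalence onto the full subcategory \(\Phi(\mathcal{A})\), which is the heart of the transported \(t\)-structure in Section 4. Second, composing with the forgetful functor and using semisimplicity, \(\mathcal{A}\) is equivalent to the ungraded \(\Lambda\textup{-}\underline{\mathrm{Gproj}}\), since the orbit category of \(\bigoplus_{n}\mathcal{A}[n]\) by \(\langle 1\rangle[1]\) collapses to \(\mathcal{A}\). No argument can give density onto the graded stable category.
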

\section{Applications}
In this final subsection, we apply the preceding results to investigate certain
nonstandard \(t\)\nobreakdash-structures on the triangulated categories
\(\mathsf{D}^{b}(\Lambda\textup{-gmod})\) and
\(\mathsf{D}_{\mathrm{sg}}(\Lambda\textup{-gmod})\). We then show that, for
quadratic monomial algebras, the finitistic dimension admits a natural
Koszul-dual estimate governed by finite paths in \(\Lambda^{!}\).

\subsection{Graded \texorpdfstring{$t$}{t}-structures}

We now show that the triangulated categories
\(\mathsf{D}^{b}(\Lambda\textup{-gmod})\) and
\(\mathsf{D}_{\mathrm{sg}}(\Lambda\textup{-gmod})\)
admit nonstandard \(t\)\nobreakdash-structures induced, via graded Koszul duality, from the categories
\(\mathsf{D}^{b}(\Lambda^{!}\textup{-Fcp}^{\mathbb{Z}})\) and
\(\mathsf{D}^{b}(\Lambda^{!}\textup{-Fcp}^{\mathbb{Z}}/\Lambda^{!}\textup{-gmod})\),
respectively.

In the bounded derived setting, this construction may be viewed as a graded analogue of the transport of \(t\)\nobreakdash-structures along Koszul duality considered in~\cite[Proposition~2.13.4]{3}.

\medskip

For completeness, we briefly recall the notion of the standard \(t\)\nobreakdash-structure on the bounded derived category of an abelian category and its heart.

Let \(\mathcal{A}\) be an abelian category. The bounded derived category \(\mathsf{D}^{b}(\mathcal{A})\) carries a canonical \(t\)\nobreakdash-structure defined by
\[
\mathsf{D}^{\le 0}
:=
\bigl\{
X^{\bullet} \in \mathsf{D}^{b}(\mathcal{A})
\;\big|\;
H^{i}(X^{\bullet})=0 \text{ for all } i>0
\bigr\},
\]
\[
\mathsf{D}^{\ge 0}
:=
\bigl\{
X^{\bullet} \in \mathsf{D}^{b}(\mathcal{A})
\;\big|\;
H^{i}(X^{\bullet})=0 \text{ for all } i<0
\bigr\}.
\]
The pair \((\mathsf{D}^{\le 0}, \mathsf{D}^{\ge 0})\) defines a \(t\)\nobreakdash-structure on \(\mathsf{D}^{b}(\mathcal{A})\), referred to as the \emph{standard \(t\)\nobreakdash-structure}.

The heart of this \(t\)\nobreakdash-structure is the full subcategory
\[
\mathsf{D}^{\heartsuit}
:=
\mathsf{D}^{\le 0} \cap \mathsf{D}^{\ge 0},
\]
which is canonically equivalent to the abelian category \(\mathcal{A}\).

Before proving the next theorem, we shall need the following lemma.

\begin{lemma}
Let \(X^{\bullet} \in \mathsf{D}^{b}\!\bigl(\Lambda^{!}\textup{-Fcp}^{\mathbb{Z}}\bigr)\), and let \(I_{x}^{!}\) be the indecomposable graded injective \(\Lambda^{!}\)-module corresponding to \(x \in Q_{0}\). Then, for all \(m,i \in \mathbb{Z}\), there is a natural isomorphism
\[
\operatorname{Hom}_{\mathsf{D}^{b}(\Lambda^{!}\textup{-Fcp}^{\mathbb{Z}})}
\bigl(X^{\bullet}, I_{x}^{!}\langle m\rangle[-i]\bigr)
\cong
\operatorname{Hom}_{\Lambda^{!}\textup{-Fcp}^{\mathbb{Z}}}
\bigl(H^{i}(X^{\bullet}), I_{x}^{!}\langle m\rangle\bigr).
\]
\end{lemma}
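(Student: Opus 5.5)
The plan is to reduce the statement to two standard facts: first, the injectivity of \(I_{x}^{!}\langle m\rangle\) in the abelian category \(\Lambda^{!}\textup{-Fcp}^{\mathbb{Z}}\); second, the computation of morphisms into a stalk complex of an injective object in a derived category.

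\emph{Step 1 (ambient abelian category and injectivity).} By Proposition~3.7, \(\Lambda^{!}\textup{-Fcp}^{\mathbb{Z}}\) is an abelian category. I will check that it is an exact abelian subcategory of \(\Lambda^{!}\textup{-GMod}\), i.e.\ that its kernels and cokernels agree with those computed in \(\Lambda^{!}\textup{-GMod}\). This follows from the co-coherence in Theorem~3.3: the kernel of a map between finitely copresented modules, computed in \(\Lambda^{!}\textup{-GMod}\), is again finitely copresented, and dually for cokernels. Next, \(I_{x}^{!}\langle m\rangle\) is injective in \(\Lambda^{!}\textup{-GMod}\), because it is the graded dual of a representable functor. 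It also lies in \(\Lambda^{!}\textup{-Fcp}^{\mathbb{Z}}\), via the trivial copresentation \(0\to I_x^!\langle m\rangle\to I_x^!\langle m\rangle\to 0\). Since monomorphisms in \(\Lambda^{!}\textup{-Fcp}^{\mathbb{Z}}\) are monomorphisms in \(\Lambda^{!}\textup{-GMod}\), the functor \(\operatorname{Hom}(-,I_{x}^{!}\langle m\rangle)\) is exact on \(\Lambda^{!}\textup{-Fcp}^{\mathbb{Z}}\). Hence \(I_{x}^{!}\langle m\rangle\) is an injective object of \(\Lambda^{!}\textup{-Fcp}^{\mathbb{Z}}\).

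\emph{Step 2 (from the derived category to the homotopy category).} Represent \(X^\bullet\) by a bounded complex in \(\Lambda^{!}\textup{-Fcp}^{\mathbb{Z}}\). Since \(I:=I_{x}^{!}\langle m\rangle[-i]\) is a bounded complex of injectives, the standard argument applies: any quasi-isomorphism \(Y^\bullet\to X^\bullet\) induces a bijection on \(\operatorname{Hom}_{\mathsf K}(-,I)\), because its cone is acyclic and maps from an acyclic bounded complex to a bounded-below complex of injectives are null-homotopic. This gives
\[
\operatorname{Hom}_{\mathsf D^b}(X^\bullet,I)\cong \operatorname{Hom}_{\mathsf K^b}(X^\bullet,I)\cong H^{-i}\bigl(\operatorname{Hom}^\bullet(X^\bullet,I_{x}^{!}\langle m\rangle)\bigr),
\]
where \(\operatorname{Hom}^{p}(X^\bullet,I_x^!\langle m\rangle)=\operatorname{Hom}(X^{-p},I_x^!\langle m\rangle)\) carries the dual differential.

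\emph{Step 3 (exactness).} The functor \(\operatorname{Hom}(-,I_{x}^{!}\langle m\rangle)\) is exact and contravariant, so it commutes with cohomology:
\[
H^{-i}\bigl(\operatorname{Hom}^\bullet(X^\bullet,I_x^!\langle m\rangle)\bigr)\cong \operatorname{Hom}\bigl(H^{i}(X^\bullet),I_x^!\langle m\rangle\bigr).
\]
To see this, apply the functor to the sequences \(0\to B^i\to Z^i\to H^i\to 0\) and \(0\to Z^i\to X^i\to B^{i+1}\to 0\). All these identifications are natural in \(X^\bullet\), which yields the claimed isomorphism.

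The main obstacle I expect is Step 1, in particular making sure that exactness in \(\Lambda^{!}\textup{-Fcp}^{\mathbb{Z}}\) is the same as exactness in \(\Lambda^{!}\textup{-GMod}\). Without this, injectivity of \(I_x^!\langle m\rangle\) would not transfer from \(\Lambda^{!}\textup{-GMod}\) to \(\Lambda^{!}\textup{-Fcp}^{\mathbb{Z}}\). I will handle this point using Theorem~3.3 together with the identification \(\Lambda^{!}\textup{-Fcp}^{\mathbb{Z}}=\Lambda^{!}\textup{-Cop}^{\mathbb{Z}}\) from Proposition~3.7. The latter category is closed under kernels and cokernels in \(\Lambda^{!}\textup{-GMod}^{-,b}\), by the two-out-of-three property recalled in Section~2.
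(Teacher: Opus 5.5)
Your proposal is correct and follows essentially the same route as the paper. Both arguments use injectivity of \(I_x^{!}\langle m\rangle\) to replace \(\operatorname{Hom}_{\mathsf D^b}\) by \(\operatorname{Hom}_{\mathsf K^b}\), and then use exactness of \(\operatorname{Hom}(-,I_x^{!}\langle m\rangle)\) to obtain \(\operatorname{Hom}(H^i(X^\bullet),I_x^{!}\langle m\rangle)\); the paper does the second step directly via \(0\to H^i\to X^i/\operatorname{Im}d^{i-1}\to\operatorname{Im}d^i\to 0\) rather than through the Hom complex. Your Step~1 adds a justification the paper leaves implicit, namely that \(I_x^{!}\langle m\rangle\) is injective in \(\Lambda^{!}\textup{-Fcp}^{\mathbb{Z}}\); the two-out-of-three property only gives closure under kernels of admissible epimorphisms and cokernels of admissible monomorphisms, but your kernel argument does not need it.
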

\begin{proof}
Put \(I=I_x^{!}\langle m\rangle\). Since \(I\) is injective, the morphisms into \(I[-i]\) in the bounded
derived category are computed in the homotopy category:
\[
\operatorname{Hom}_{\mathsf D^b}
\bigl(X^\bullet,I[-i]\bigr)
=
\operatorname{Hom}_{\mathsf K^b}
\bigl(X^\bullet,I[-i]\bigr).
\]

A chain map \(f^\bullet:X^\bullet\to I[-i]\) is determined by a morphism
\(f^i:X^i\to I\) satisfying \(f^i d_X^{i-1}=0\). Thus \(f^i\) factors through
\(X^i/\operatorname{Im}d_X^{i-1}\). Two such maps are homotopic if and only if
their difference factors through \(d_X^i:X^i\to X^{i+1}\), equivalently through
\(\operatorname{Im}d_X^i\). Therefore
\[
\operatorname{Hom}_{\mathsf K^b}
\bigl(X^\bullet,I[-i]\bigr)
\cong
\frac{
\operatorname{Hom}\bigl(X^i/\operatorname{Im}d_X^{i-1},I\bigr)
}{
\operatorname{Hom}\bigl(\operatorname{Im}d_X^i,I\bigr)
}.
\]

Now there is a short exact sequence
\[
0\longrightarrow H^i(X^\bullet)
\longrightarrow X^i/\operatorname{Im}d_X^{i-1}
\longrightarrow \operatorname{Im}d_X^i
\longrightarrow 0.
\]
Since \(I\) is injective, applying \(\operatorname{Hom}(-,I)\) gives an exact
sequence
\[
0\longrightarrow
\operatorname{Hom}(\operatorname{Im}d_X^i,I)
\longrightarrow
\operatorname{Hom}\bigl(X^i/\operatorname{Im}d_X^{i-1},I\bigr)
\longrightarrow
\operatorname{Hom}\bigl(H^i(X^\bullet),I\bigr)
\longrightarrow 0.
\]
Hence
\[
\operatorname{Hom}_{\mathsf D^b}
\bigl(X^\bullet,I[-i]\bigr)
\cong
\operatorname{Hom}\bigl(H^i(X^\bullet),I\bigr).
\]
\end{proof}
We are now ready to show how Koszul duality transports the standard \(t\)-structure to nonstandard \(t\)-structures on \(\mathsf{D}^{b}(\Lambda\textup{-gmod})\) and on  \(\mathsf{D}_{\mathrm{sg}}(\Lambda\textup{-gmod})\).
\begin{theorem}
Let
\[
\mathfrak{F}\colon
\mathsf{D}^{b}\bigl(\Lambda^{!}\textup{-Fcp}^{\mathbb{Z}}\bigr)
\xrightarrow{\ \sim\ }
\mathsf{D}^{b}(\Lambda\textup{-gmod})
\]
be the graded derived Koszul duality functor. Define full subcategories of
\(\mathsf{D}^{b}(\Lambda\textup{-gmod})\) by
\[
\begin{aligned}
\mathcal{D}^{\le 0}_{\mathfrak{F}}
&:=
\mathfrak{F}\Bigl(
\mathsf{D}^{\le 0}\bigl(\Lambda^{!}\textup{-Fcp}^{\mathbb{Z}}\bigr)
\Bigr),\\
\mathcal{D}^{\ge 0}_{\mathfrak{F}}
&:=
\mathfrak{F}\Bigl(
\mathsf{D}^{\ge 0}\bigl(\Lambda^{!}\textup{-Fcp}^{\mathbb{Z}}\bigr)
\Bigr).
\end{aligned}
\]
Then \((\mathcal{D}^{\le 0}_{\mathfrak{F}},\mathcal{D}^{\ge 0}_{\mathfrak{F}})\) is a \(t\)-structure on
\(\mathsf{D}^{b}(\Lambda\textup{-gmod})\). Its heart is equivalent to the abelian category
\[
\mathcal{H}_{\mathfrak{F}}
\;\simeq\;
\Lambda^{!}\textup{-Fcp}^{\mathbb{Z}},
\]
and identifies with the full subcategory of linear complexes with bounded cohomology.

\medskip

Moreover, this \(t\)-structure admits the following intrinsic description:
\[
\mathcal{D}^{\le 0}_{\mathfrak{F}}
=
\left\{
Y^{\bullet} \in \mathsf{D}^{b}(\Lambda\textup{-gmod})
\;\middle|\;
\begin{array}{l}
\operatorname{Hom}_{\mathsf{D}^{b}(\Lambda\textup{-gmod})}
\bigl(Y^{\bullet}, S_x\langle m\rangle[i-m]\bigr)=0\\
\qquad \text{for all } i>0,\; x\in Q_0,\; m\in\mathbb{Z}
\end{array}
\right\},
\]
\[
\mathcal{D}^{\ge 0}_{\mathfrak{F}}
=
\left\{
Y^{\bullet} \in \mathsf{D}^{b}(\Lambda\textup{-gmod})
\;\middle|\;
\begin{array}{l}
\operatorname{Hom}_{\mathsf{D}^{b}(\Lambda\textup{-gmod})}
\bigl(Y^{\bullet}, S_x\langle m\rangle[i-m]\bigr)=0\\
\qquad \text{for all } i<0,\; x\in Q_0,\; m\in\mathbb{Z}
\end{array}
\right\}.
\]

\medskip

Furthermore, the same functor \(\mathfrak{F}\) induces a triangulated equivalence
\[
\mathfrak{F}\colon
\mathsf{D}^{b}\bigl(
\Lambda^{!}\textup{-Fcp}^{\mathbb{Z}} \big/ \Lambda^{!}\textup{-gmod}
\bigr)
\xrightarrow{\ \sim\ }
\mathsf{D}_{\mathrm{sg}}(\Lambda\textup{-gmod}),
\]
and the full subcategories
\[
\begin{aligned}
\mathcal{D}^{\le 0}_{\mathfrak{F}}
&:=
\left\{
Y^{\bullet}\in
\mathsf{D}_{\mathrm{sg}}(\Lambda\textup{-gmod})
\;\middle|\;
Y^{\bullet}\simeq
\bigoplus_{i\le 0}
\mathfrak{F}(M_i)[-i],
\quad
M_i\in
\Lambda^{!}\textup{-Fcp}^{\mathbb Z}/\Lambda^{!}\textup{-gmod}
\right\},\\
\mathcal{D}^{\ge 0}_{\mathfrak{F}}
&:=
\left\{
Y^{\bullet}\in
\mathsf{D}_{\mathrm{sg}}(\Lambda\textup{-gmod})
\;\middle|\;
Y^{\bullet}\simeq
\bigoplus_{i\ge 0}
\mathfrak{F}(M_i)[-i],
\quad
M_i\in
\Lambda^{!}\textup{-Fcp}^{\mathbb Z}/\Lambda^{!}\textup{-gmod}
\right\}
\end{aligned}
\]
define a \(t\)-structure on
\(\mathsf{D}_{\mathrm{sg}}(\Lambda\textup{-gmod})\) such that
 each object \(M_i\) is represented by a colinear graded
\(\Lambda^{!}\)-module, and
\(\mathfrak{F}(M_i)\) is a linear projective resolution of a linear \(\Lambda\)-modules.

Its heart is equivalent to
\[
\mathcal{H}_{\mathfrak{F}}
\;\simeq\;
\Lambda^{!}\textup{-Fcp}^{\mathbb{Z}}
\big/
\Lambda^{!}\textup{-gmod},
\]
which is a hereditary abelian category. Moreover, this heart identifies concretely with the
full subcategory consisting of stalk complexes of linear \(\Lambda\)-modules.
\end{theorem}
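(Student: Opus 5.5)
The plan is to transport everything along the triangulated equivalence $\mathfrak{F}$ of Theorem~3.8(1). Since $\Lambda^{!}\textup{-Fcp}^{\mathbb{Z}}$ is abelian (Proposition~3.7), the standard $t$-structure on $\mathsf{D}^{b}(\Lambda^{!}\textup{-Fcp}^{\mathbb{Z}})$ exists and has heart $\Lambda^{!}\textup{-Fcp}^{\mathbb{Z}}$. Transporting it along $\mathfrak{F}$ gives the pair $(\mathcal{D}^{\le 0}_{\mathfrak{F}},\mathcal{D}^{\ge 0}_{\mathfrak{F}})$. The axioms (closure under shifts, Hom-vanishing, truncation triangles) are preserved because $\mathfrak{F}$ is an equivalence commuting with $[1]$. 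Hence the heart is $\mathfrak{F}(\Lambda^{!}\textup{-Fcp}^{\mathbb{Z}})$. By construction, $\mathfrak{F}(M)=K(M)$ for $M$ concentrated in degree $0$, a linear complex. Moreover $K(M)$ has bounded cohomology because $M$ is coperfect ($\Lambda^{!}\textup{-GMod}^{-,b}=\Lambda^{!}\textup{-Cop}^{\mathbb{Z}}=\Lambda^{!}\textup{-Fcp}^{\mathbb{Z}}$). Conversely, any linear right-bounded complex with bounded cohomology is $K(M)$ for such an $M$, by the Martínez-Villa--Saorín equivalence. This gives the identification of the heart with linear complexes with bounded cohomology.

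For the intrinsic description, I would use that injectives cogenerate. A complex $X^\bullet$ lies in $\mathsf{D}^{\le 0}$ iff $H^{i}(X^\bullet)=0$ for $i>0$. This holds iff $\operatorname{Hom}(H^i(X^\bullet),I^{!}_x\langle m\rangle)=0$ for all $x,m$ and all $i>0$, since a nonzero finitely copresented module embeds into a finite sum of shifted indecomposable injectives. By Lemma~4.1 this Hom equals $\operatorname{Hom}_{\mathsf{D}^b}(X^\bullet,I^{!}_x\langle m\rangle[-i])$. Applying $\mathfrak{F}$, note that $\mathfrak{F}(I^{!}_x)=K(I^{!}_x)\simeq S_x$ by Theorem~2.3. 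By Lemma~3.11, $\mathfrak{F}(I^{!}_x\langle m\rangle[-i])\cong S_x\langle m\rangle[-m-i]$. After reindexing $i$ (replacing $i$ by $-i$ where necessary, and checking sign conventions against Lemma~3.11), we get exactly the condition $\operatorname{Hom}(Y^\bullet,S_x\langle m\rangle[i-m])=0$ in the statement. The $\ge 0$ half is identical.

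For the singular part, I would take the equivalence of Theorem~3.8(2) and again transport the standard $t$-structure of $\mathsf{D}^b(\Lambda^{!}\textup{-Fcp}^{\mathbb{Z}}/\Lambda^{!}\textup{-gmod})$, whose heart is the abelian quotient of Proposition~3.7. Since this quotient is hereditary (Corollary~3.9), every object of its bounded derived category is isomorphic to $\bigoplus_i M_i[-i]$. Then $\mathsf{D}^{\le 0}$ and $\mathsf{D}^{\ge 0}$ are exactly the sums with $i\le 0$, resp.\ $i\ge 0$, which yields the displayed descriptions. By Corollary~3.13, each $M_i$ may be chosen colinear. Then $\mathfrak{F}(M_i)=K(M_i)$ has cohomology in a single degree, as in the proof of Proposition~3.12. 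So $\mathfrak{F}(M_i)$ is, up to shift, a linear projective resolution of a linear $\Lambda$-module, and the heart consists of stalk complexes of linear modules.

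The main obstacle I expect is the bookkeeping in the intrinsic description, i.e.\ matching the $\langle m\rangle$ and $[\,\cdot\,]$ shifts through Lemma~3.11 and the sign convention for $i$. A secondary delicate point is that Lemma~4.1 needs $I^{!}_x\langle m\rangle$ to detect all nonzero finitely copresented modules, which holds since each has nonzero socle in $\Lambda^{!}\textup{-Fcp}^{\mathbb{Z}}$.
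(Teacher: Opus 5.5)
Your overall route is the paper's: transport the standard $t$-structure along $\mathfrak{F}$, identify the heart through $K$ and the Mart\'inez-Villa--Saor\'in equivalence, get the intrinsic description from Lemma~4.1, and treat the singular case via heredity and colinear representatives. (The colinear-representative result is Corollary~3.14, not 3.13.)

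The step that fails is the final ``reindexing'' in the intrinsic description. Your computation up to that point is right. Cogeneration by the $I^{!}_x\langle m\rangle$ and Lemma~4.1 give $X^{\bullet}\in\mathsf{D}^{\le 0}$ iff $\operatorname{Hom}(X^{\bullet},I^{!}_x\langle m\rangle[-i])=0$ for all $i>0$. Lemma~3.11 then converts this into
\[
\operatorname{Hom}_{\mathsf{D}^{b}(\Lambda\textup{-gmod})}\bigl(Y^{\bullet},S_x\langle m\rangle[-i-m]\bigr)=0
\quad\text{for all } i>0,\ x\in Q_0,\ m\in\mathbb{Z}.
\]
Substituting $-i$ for $i$ is not a harmless relabelling, because it turns the range $i>0$ into $i<0$. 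What you get is exactly the condition the statement displays for $\mathcal{D}^{\ge 0}_{\mathfrak{F}}$, not for $\mathcal{D}^{\le 0}_{\mathfrak{F}}$.

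No choice of convention fixes this. The sign in front of $i$ comes from the cohomological shift, with which $\mathfrak{F}$ commutes; only the sign in front of $m$ depends on Lemma~3.11. Concretely, $S_y[1]\cong\mathfrak{F}(I^{!}_y[1])$ lies in $\mathcal{D}^{\le 0}_{\mathfrak{F}}$. Yet $x=y$, $m=0$, $i=1$ gives $\operatorname{Hom}(S_y[1],S_y[1])\neq 0$, which violates the displayed aisle condition.

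So what your argument actually proves is that the two intrinsic descriptions in the statement are interchanged. $\mathcal{D}^{\le 0}_{\mathfrak{F}}$ is cut out by $\operatorname{Hom}(Y^{\bullet},S_x\langle m\rangle[i-m])=0$ for all $i<0$, and $\mathcal{D}^{\ge 0}_{\mathfrak{F}}$ by the same vanishing for all $i>0$. You should state and prove this corrected form rather than assert that reindexing recovers the displayed one.

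The paper's own proof makes the same slip: it applies Lemma~4.1 with $I^{!}_x\langle m\rangle[i]$, where the lemma actually involves $I^{!}_x\langle m\rangle[-i]$. The rest of your proposal matches the paper's argument, with somewhat more detail, and is sound. That covers the heart as linear complexes with bounded cohomology, the split description of the singular $t$-structure, and the stalk-complex description of its heart.
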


\begin{proof}
Since \(\mathfrak{F}\) is a triangulated equivalence, it transports the standard \(t\)-structure on
\(\mathsf{D}^{b}\bigl(\Lambda^{!}\textup{-Fcp}^{\mathbb{Z}}\bigr)\) to a \(t\)-structure on
\(\mathsf{D}^{b}(\Lambda\textup{-gmod})\). By construction, the aisle and coaisle are given by
\[
\mathcal{D}^{\le 0}_{\mathfrak{F}}
=
\mathfrak{F}\bigl(\mathsf{D}^{\le 0}(\Lambda^{!}\textup{-Fcp}^{\mathbb{Z}})\bigr),
\qquad
\mathcal{D}^{\ge 0}_{\mathfrak{F}}
=
\mathfrak{F}\bigl(\mathsf{D}^{\ge 0}(\Lambda^{!}\textup{-Fcp}^{\mathbb{Z}})\bigr),
\]
and hence define a \(t\)-structure.

On the other hand, by lemma 4.1, for any \(X^{\bullet}\in \mathsf{D}^{b}(\Lambda\textup{-gmod})\), one has
\[
H^{i}(X^{\bullet})=0 \text{ for all } i>0 \ (\text{resp. } i<0)
\]
if and only if
\[
\operatorname{Hom}_{\mathsf{D}^{b}(\Lambda^{!}\textup{-Fcp}^{\mathbb{Z}})}
\bigl(X^{\bullet}, I_x^{!}\langle m\rangle[i]\bigr)=0
\quad \text{for all } i>0 \ (\text{resp. } i<0),\ x\in Q_0,\ m\in\mathbb{Z}.
\]
Transporting this characterization via the equivalence \(\mathfrak{F}\) yields the explicit description of the subcategories \(\mathcal{D}^{\le 0}_{\mathfrak{F}}\) and \(\mathcal{D}^{\ge 0}_{\mathfrak{F}}\).

The heart is the image of the heart of the standard \(t\)-structure, namely
\[
\mathfrak{F}\bigl(\Lambda^{!}\textup{-Fcp}^{\mathbb{Z}}\bigr),
\]
which, by graded derived Koszul duality, identifies with the category of linear complexes with bounded cohomology.

The statement for the singularity category follows from Proposition~3.1 and Corollary 3.14 together with graded singular Koszul duality.
\end{proof}
\begin{remark}
The dual statement of Proposition~3.17 shows that
\(\mathsf{D}^{b}\bigl(\Lambda^{!}\textup{-Fcp}^{\mathbb{Z}}\bigr)\)
also admits a nonstandard \(t\)-structure induced from the standard
\(t\)-structure on
\(\mathsf{D}^{b}(\Lambda\textup{-gmod})\)
via the quasi-inverse functor \(\mathfrak{G}\). Its heart identifies with the
category of bounded colinear complexes.
\end{remark}

\subsection{The finitistic dimension}

In this final subsection, we study the finitistic dimension from the viewpoint
of Koszul duality. Recall that, for a finite-dimensional algebra \(\Lambda\),
the finitistic dimension is defined by
\[
\operatorname{fin.dim}\Lambda
=
\sup\{
\operatorname{pd}_{\Lambda} M
\mid
M\in \Lambda\textup{-mod},\
\operatorname{pd}_{\Lambda}M<\infty
\}.
\]
Thus \(\operatorname{fin.dim}\Lambda\) measures the largest projective dimension
occurring among finitely generated \(\Lambda\)-modules of finite projective
dimension. The Finitistic Dimension Conjecture asserts that this invariant is
finite for every finite-dimensional algebra. This conjecture, publicized by
Bass, remains one of the central open problems in the homological theory of
finite-dimensional algebras.

The conjecture is known to hold for monomial algebras. More precisely,
if \(\Lambda=kQ/I\), where \(I\) is generated by paths, then
\(\operatorname{fin.dim}\Lambda<\infty\). This was first proved by Green,
Kirkman, and Kuzmanovich~\cite{13}; subsequent refinements gave simplified
proofs and explicit bounds~\cite{6,18,32}. Nevertheless, the conjecture remains
open in substantially broader classes of finite-dimensional algebras, including
the class of Koszul algebras.

We focus here on finite-dimensional quadratic monomial algebras. Our aim is to
show that, in this setting, the finitistic dimension can be bounded in terms of
a simple invariant of the quadratic dual algebra \(\Lambda^{!}\). The key point
is that, for monomial relation algebras, second syzygies admit an explicit
description in terms of principal left ideals generated by arrows; see
Lemma~3.2 and~\cite{32}.

Koszul duality provides the bridge between these two descriptions. Indeed, the
Koszul functor sends graded \(\Lambda^{!}\)-modules to complexes of finitely
generated projective \(\Lambda\)-modules, and the graded support of suitable
\(\Lambda^{!}\)-modules controls the length of the corresponding projective
complexes over \(\Lambda\). Thus, instead of estimating projective dimensions
over \(\Lambda\) directly, one may bound them by studying the finite lengths of
nonzero paths in the Koszul dual algebra \(\Lambda^{!}\). For a finite path
\(p^{\operatorname{op}}\) in \(\Lambda^{!}\), we denote by
\(\ell(p^{\operatorname{op}})\) its length. This yields the following estimate
for the finitistic dimension, which can be computed from the lengths of finite
paths in \(\Lambda^{!}\).

\begin{theorem}
Let \(\Lambda=kQ/I\) be a finite-dimensional quadratic monomial algebra. Then
\[
\operatorname{fin.dim}\Lambda
\leq
2+
\sup\{
\ell(p^{\operatorname{op}})
\mid
p^{\operatorname{op}} \textup{ is a nonzero path of finite length in }
\Lambda^{!}
\}.
\]
In particular, if \(\operatorname{gl.dim}\Lambda<\infty\), then
\[
\operatorname{gl.dim}\Lambda
=
\max\{\, n\in \mathbb{N}\mid \Lambda^{!}_{n}\neq 0\,\}.
\]
\end{theorem}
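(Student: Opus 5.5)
The plan is to reduce to the modules \(\Lambda\alpha\) by means of Lemma~3.2, and then read off their projective dimensions from Proposition~3.1.

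\emph{Step 1: reduction to second syzygies.} Let \(M\in\Lambda\textup{-mod}\) with \(\operatorname{pd}_\Lambda M<\infty\). Every ungraded finite-dimensional module over a quadratic monomial algebra has second syzygy \(\Omega^2M\) isomorphic to a finite direct sum of left ideals \(\Lambda\alpha\), with \(\alpha\) an arrow. This is Lemma~3.2 applied to a minimal presentation, together with the observation in the proof of Theorem~3.14 that syzygies are graded. Hence
\[
\operatorname{pd}M\le 2+\max_\alpha \operatorname{pd}(\Lambda\alpha),
\]
where the maximum runs over the summands \(\Lambda\alpha\) occurring in \(\Omega^2M\). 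Each such summand has finite projective dimension, since \(\Omega^2 M\) does. If \(M\) has projective dimension at most \(1\), there is nothing to prove.

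\emph{Step 2: projective dimension of \(\Lambda\alpha\) from paths in \(\Lambda^!\).} Let \(\alpha^{\mathrm{op}}\colon y\to x\). By Proposition~3.1, \(K(\alpha^{\mathrm{op}}I^!_x)\) is the minimal projective resolution of \(\Lambda\alpha\). Its term in cohomological degree \(n\) is
\[
\bigoplus_u P_u\langle n\rangle\otimes(\alpha^{\mathrm{op}}I^!_x)_n(u).
\]
This component is spanned by nonzero paths in \(\Lambda^!\) of length \(n\) that end at \(x\) and factor through \(\alpha^{\mathrm{op}}\). Since the resolution is minimal, \(\operatorname{pd}\Lambda\alpha\) equals the supremum of the lengths of the nonzero paths \(\alpha^{\mathrm{op}}p^{\mathrm{op}}\), shifted by one because the resolution starts in degree \(1\). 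Finiteness of \(\operatorname{pd}\Lambda\alpha\) means exactly that \(\alpha^{\mathrm{op}}I^!_x\) is finite-dimensional, i.e.\ that only finitely many such nonzero paths exist. Each of them is then a nonzero path of finite length in the sense of the statement. Hence
\[
\operatorname{pd}\Lambda\alpha\le \sup\{\ell(p^{\mathrm{op}})\},
\]
and combining with Step 1 gives the claimed inequality.

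\emph{Step 3: the global dimension.} Assume \(\operatorname{gl.dim}\Lambda<\infty\). Then Theorem~2.3 identifies \(K(I^!_x)\) with the minimal resolution of \(S_x\). Its length is the largest \(n\) with \((I^!_x)_n\neq0\), i.e.\ the maximal length of a nonzero path of \(\Lambda^!\) ending at \(x\). Since \(\operatorname{gl.dim}\Lambda=\max_x\operatorname{pd}S_x\), taking the maximum over \(x\) gives the equality with \(\max\{n\mid\Lambda^!_n\neq0\}\). Finiteness of \(\operatorname{gl.dim}\Lambda\) forces \(\Lambda^!\) to be finite-dimensional, so this maximum exists.

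\emph{Main obstacle.} I expect the main obstacle to be the precise meaning of ``nonzero path of finite length''. When \(\Lambda^!\) is infinite-dimensional, the supremum must be interpreted as running over paths that are not prefixes of arbitrarily long nonzero paths, i.e.\ paths inside finite-dimensional submodules \(\alpha^{\mathrm{op}}I^!_x\). Otherwise the bound is vacuous. I would first try to make this precise by checking that finiteness of \(\operatorname{pd}\Lambda\alpha\) is equivalent to the finite-dimensionality of \(\alpha^{\mathrm{op}}I^!_x\). I would also check the off-by-one shift from degree \(1\) in Proposition~3.1 to make sure the constant \(2\) is right.
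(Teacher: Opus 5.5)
Your proposal is correct and follows essentially the paper's argument. The paper also gets the bound from Lemma~3.2 (second syzygies are direct sums of modules \(\Lambda\alpha\)) combined with Proposition~3.1 (\(K(\alpha^{\mathrm{op}}I^{!}_{x})\) is the minimal resolution of \(\Lambda\alpha\)), and the global-dimension formula from \(K(I^{!}_{x})\) resolving \(S_x\), exactly as in your Steps~1--3. Your version is in fact more careful than the paper's one-line proof: the degree shift gives the sharper bound \(1+\sup\), and you are right that, read literally, the supremum is infinite whenever \(\Lambda^{!}\) is infinite-dimensional; the paths your argument actually needs are those lying in finite-dimensional submodules \(\alpha^{\mathrm{op}}I^{!}_{x}\), which is not quite the same set as your ``not prefixes of arbitrarily long paths'' description, but the bound holds for it.
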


\begin{proof}
The first assertion follows immediately from Lemma~3.2 and Proposition~3.1.
For the second assertion, the Koszul functor sends each indecomposable injective
\(\Lambda^{!}\)-module \(I_x^{!}\) to the minimal graded projective resolution
of the simple \(\Lambda\)-module \(S_x\). Therefore the global dimension of
\(\Lambda\) is determined by the largest nonzero graded component of
\(\Lambda^{!}\). 
\end{proof}
\begin{remark}
This theorem illustrates how Koszul duality translates finiteness properties of
the Koszul dual  \(\Lambda^{!}\) into homological bounds for
\(\Lambda\). It suggests that analogous methods may be useful in the study of
graded finitistic dimensions for more general finite-dimensional Koszul
algebras.
\end{remark}

\noindent
\textit{E-mail address:} \texttt{alesm.bouhada@gmail.com}
\end{document}